%
\documentclass[times,sort&compress,3p]{elsarticle}
\journal{Journal of Multivariate Analysis}
\usepackage{amsmath, amssymb, enumerate, amsthm}

\RequirePackage[OT1]{fontenc}
\RequirePackage{amsthm,amsmath}
\RequirePackage[colorlinks,citecolor=blue,urlcolor=blue]{hyperref}
\RequirePackage[numbers]{natbib}

\makeatletter
\providecommand{\doi}[1]{%
  \begingroup
    \let\bibinfo\@secondoftwo
    \urlstyle{rm}%
    \href{http://dx.doi.org/#1}{%
      doi:\discretionary{}{}{}%
      \nolinkurl{#1}%
    }%
  \endgroup
}
\makeatother

\usepackage{epsfig}
\usepackage{graphicx}
\usepackage[font=small,labelfont=bf,justification=justified,singlelinecheck=false]{caption}
\usepackage{subcaption}
\usepackage{amsfonts,amssymb}
\usepackage{tikz}
\usepackage{xcolor}

\newtheorem{theorem}{Theorem}

\newtheorem{corollary}{Corollary}
\newtheorem{conjecture}{Conjecture}

\newtheorem{remark}{Remark}

\newcommand{\thb}{\bar{\theta}}
\newcommand{\tht}{\theta}
\newcommand{\Tht}{\mathbf{\Theta}}
\newcommand{\thh}{\hat{\theta}}
\newcommand{\Thh}{\mathbf{\hat{\Theta}}}
\newcommand{\ut}{u}
\newcommand{\uh}{\hat{u}}
\newcommand{\zt}{z}
\newcommand{\zh}{\hat{z}}
\newcommand{\ub}{\tilde{u}}
\newcommand{\Ub}{\mathbf{\tilde{U}}}
\newcommand{\Ut}{\mathbf{U}}
\newcommand{\Uh}{\mathbf{\hat{U}}}
\newcommand{\Zt}{\mathbf{Z}}
\newcommand{\Zh}{\mathbf{\hat{Z}}}
\newcommand{\R}{\mathbf{R}}
\newcommand{\E}{\mathbf{E}}
\newcommand{\Eta}{\mathbf{H}}
\newcommand{\TT}{\top}
\newcommand{\CT}{\mathsf{H}}
\newcommand{\Y}{\mathbf{Y}}
\newcommand{\Yn}{\mathbf{\tilde{Y}}}
\newcommand{\LP}{\mathbf{P}}
\newcommand{\X}{\mathbf{X}}
\newcommand{\Hess}{\mathbf{H}}
\newcommand{\Span}{\operatorname*{Span}}

\newcommand{\diag}{\operatorname*{diag}}
\newcommand{\bR}{\mathbb{R}}
\newcommand{\bC}{\mathbb{C}}
\newcommand{\bE}{{\rm E}}

\newcommand{\One}{\mathbf{1}}
\newcommand{\LLambda}{\mathbf{\Lambda}}
\newcommand{\VV}{\mathbf{V}}
\newcommand{\cN}{\mathcal{N}}
\newcommand{\cCN}{\mathcal{CN}}
\newcommand{\sigmab}{\bar{\sigma}}
\newcommand{\asto}{\overset{a.s.}{\longrightarrow}}

\usepackage{fancyhdr}
\fancypagestyle{license}{\fancyhf{}%
  \lfoot{%
  \footnotesize%
  \textcopyright\ 2018. Licensed under the CC-BY-NC-ND 4.0 license
  \url{http://creativecommons.org/licenses/by-nc-nd/4.0/}\\
  Published version available at \doi{10.1016/j.jmva.2018.06.002}
  }
  \rfoot{\thepage}
}

\usepackage{chngcntr}
\makeatletter
\newcommand*\arabicminus[2]{\expandafter\@arabicminus{\csname c@#1\endcsname}{#2}}
\newcommand*\@arabicminus[2]{\the\numexpr(#1)-#2\relax}
\makeatother

\makeatletter
\def\ps@pprintTitle{%
   \let\@oddhead\@empty
   \let\@evenhead\@empty
   \def\@oddfoot{\reset@font\hfil\thepage\hfil}
   \let\@evenfoot\@oddfoot
}
\makeatother

\begin{document}

\pdfbookmark[0]{Paper}{bookmark:body}

\begin{frontmatter}
\title{Asymptotic performance of PCA for high-dimensional heteroscedastic data}

\author[]{David Hong\corref{cor1}\fnref{t1}}
\ead{dahong@umich.edu}

\author{Laura Balzano\fnref{t2}}
\ead{girasole@umich.edu}

\author{Jeffrey A. Fessler\fnref{t3}}
\ead{fessler@umich.edu}

\address{
Department of Electrical Engineering and Computer Science\\
University of Michigan, Ann Arbor, MI 48109, USA
}

\cortext[cor1]{Corresponding author}
\fntext[t1]{This work was supported by the National Science Foundation Graduate Research Fellowship [DGE \#1256260].}
\fntext[t2]{This work was supported by the ARO [W911NF-14-1-0634]; and DARPA [DARPA-16-43-D3M-FP-037].}
\fntext[t3]{This work was supported by the UM-SJTU data science seed fund; and the NIH [U01 EB 018753].}

\begin{abstract}
Principal Component Analysis (PCA) is a classical method for reducing the dimensionality of data by projecting them onto a subspace that captures most of their variation.
Effective use of PCA in modern applications requires understanding its performance for data that are both high-dimensional and heteroscedastic.
This paper analyzes the statistical performance of PCA in this setting, i.e., for high-dimensional data drawn from a low-dimensional subspace and degraded by heteroscedastic noise.
We provide simplified expressions for the asymptotic PCA recovery of the underlying subspace, subspace amplitudes and subspace coefficients; the expressions enable both easy and efficient calculation and reasoning about the performance of PCA.
We exploit the structure of these expressions
to show that,
for a fixed average noise variance,
the asymptotic recovery of PCA
for heteroscedastic data
is always worse than that for homoscedastic data
(i.e., for noise variances that are equal
across samples).
Hence, while average noise variance is often a practically convenient measure for the overall quality of data, it gives an overly optimistic estimate of the performance of PCA for heteroscedastic data.
\end{abstract}

\begin{keyword}
Asymptotic random matrix theory\sep
Heteroscedasticity \sep
High-dimensional data \sep
Principal component analysis \sep
Subspace estimation
\MSC 62H25 \sep 62H12 \sep 62F12
\end{keyword}

\end{frontmatter}

\thispagestyle{license}

\section{Introduction} \label{sct:intro}

Principal Component Analysis (PCA) is a classical method for reducing the dimensionality of data by representing them in terms of a new set of variables, called principal components, where variation in the data is largely captured by the first few principal components~\cite{jolliffe1986pca}.
This paper analyzes the asymptotic performance
of PCA
for data with heteroscedastic noise.
In particular, we consider the classical and commonly employed unweighted form of PCA that treats all samples equally and remains a natural choice in applications where estimates of the noise variances
are unavailable or one hopes the noise is ``close enough'' to being homoscedastic.
Our analysis uncovers several practical new insights for this setting; the findings both broaden our understanding of PCA and also precisely characterize the impact of heteroscedasticity.

Given zero-mean sample vectors $y_1,\dots,y_n \in \bC^d$, the first $k$ principal components~$\uh_1,\dots,\uh_k \in \bC^d$ and corresponding squared PCA amplitudes $\thh_1^2,\dots,\thh_k^2 \in \bR_+$ are the first $k$ eigenvectors and eigenvalues, respectively, of the sample covariance matrix $(y_1y_1^\CT + \cdots + y_ny_n^\CT)/n$.
The associated score vectors~$\zh^{(1)},\dots,\zh^{(k)}\in\bC^n$ are  standardized projections given, for each $i \in \{ 1, \ldots, k\}$, by $\zh^{(i)}=(1/\thh_i)\{\uh_i^\CT(y_1, \ldots, y_n)\}^\CT$.
The principal components~$\uh_1, \ldots, \uh_k$, PCA amplitudes~$\thh_1, \ldots, \thh_k$ and score vectors~$\zh^{(1)},\dots,\zh^{(k)}$ are efficiently obtained from the data matrix $(y_1 , \ldots ,y_n) \in \bC^{d \times n}$ as its left singular vectors, (scaled) singular values and (scaled) right singular vectors, respectively.

A natural setting for PCA is when data are noisy measurements of points drawn from a subspace. In this case, the first few principal components $\uh_1, \ldots, \uh_k$ form an estimated  basis for the underlying subspace; if they recover the underlying subspace accurately then the low-dimensional scores~$\zh^{(1)}, \ldots, \zh^{(k)}$ will largely capture the meaningful variation in the data.
This paper analyzes how well the first $k$ principal components~$\uh_1, \ldots, \uh_k$, PCA amplitudes~$\thh_1, \ldots, \thh_k$ and score vectors $\zh^{(1)}, \ldots, \zh^{(k)}$ recover their underlying counterparts when the data are heteroscedastic, that is, when the noise in the data has non-uniform variance across samples.

\subsection{High-dimensional, heteroscedastic data}

Dimensionality reduction is a fundamental task, so PCA has been applied in a broad variety of both traditional and modern settings.
See~\cite{jolliffe1986pca} for a thorough review of PCA and some of its important traditional applications.
A sample of modern application areas include medical imaging~\cite{ardekani1999adi,pedersen2009ktp}, classification for cancer data~\cite{sharma2015and}, anomaly detection on computer networks~\cite{lakhina2004dnw}, environmental monitoring~\cite{papadimitriou2005spd,wagner1996sdu} and genetics~\cite{leek2011acs}, to name just a few.

It is common in modern applications in particular for the data to be high-dimensional (i.e., the number of variables measured is comparable with or larger than the number of samples), which has motivated the development of new techniques and theory for this regime~\cite{johnstone2009sco}. It is also common for modern data sets to have heteroscedastic noise. For example,~Cochran and Horne \cite{cochran1977swp} apply a PCA variant to spectrophotometric data  from the study of chemical reaction kinetics. The spectrophotometric data are absorptions at various wavelengths over time, and measurements are averaged over increasing windows of time causing the amount of noise to vary across time. Another example is given in~\cite{tamuz2005cse}, where data are astronomical measurements of stars taken at various times; here changing atmospheric effects cause the amount of noise to vary across time. More generally, in the era of big data where inference is made using numerous data samples drawn from a myriad of different sources, one can expect that both high-dimensionality and heteroscedasticity will be the norm. It is important to understand the performance of PCA in such settings.

\subsection{Contributions of this paper} \label{sct:contribution}

This paper provides simplified expressions for the performance of PCA from heteroscedastic data in the limit as both the number of samples and dimension tend to infinity. The expressions quantify the asymptotic recovery of an underlying subspace, subspace amplitudes and coefficients by the principal components, PCA amplitudes and scores, respectively. The asymptotic recoveries are functions of the samples per ambient dimension, the underlying subspace amplitudes and the distribution of noise variances. Forming the expressions involves first connecting several results from random matrix theory~\cite{bai2010sao,benaych2012tsv} to obtain initial expressions for asymptotic recovery that are difficult to evaluate and analyze, and then exploiting a nontrivial structure in the expressions to obtain much simpler algebraic descriptions.
These descriptions enable both easy and efficient calculation and reasoning about the asymptotic performance of PCA.

The impact of heteroscedastic noise, in particular, is not immediately obvious given results of prior literature. How much do a few noisy samples degrade the performance of PCA?
Is heteroscedasticity
ever beneficial for PCA?
Our simplified expressions enable such questions to be answered.
In particular, we use these expressions to show that,
for a fixed average noise variance,
the asymptotic subspace recovery, amplitude recovery and coefficient recovery
are all worse for heteroscedastic data
than for homoscedastic data
(i.e., for noise variances that are equal
across samples), confirming a conjecture in~\cite{hong2016tat}.
Hence, while average noise variance is often a practically convenient measure for the overall quality of data, it gives an overly optimistic estimate of PCA performance.
This analysis provides a deeper understanding of how PCA performs in the presence of heteroscedastic noise.

\subsection{Relationship to previous works}

Homoscedastic noise has been well-studied, and there are many nice results characterizing PCA in this setting.
Benaych-Georges and Nadakuditi \cite{benaych2012tsv} give an expression for asymptotic subspace recovery, also found in~\cite{johnstone2009oca,nadler2008fsa,paul2007aos}, in the limit as both the number of samples and ambient dimension tend to infinity.
As argued in~\cite{johnstone2009oca}, the expression in~\cite{benaych2012tsv} reveals that asymptotic subspace recovery is perfect only when the number of samples per ambient dimension tends to infinity, so PCA is not (asymptotically) consistent for high-dimensional data.
Various alternatives~\cite{bickel2008crb,elkaroui2008onc,johnstone2009oca} can regain consistency by exploiting sparsity in the covariance matrix or in the principal components.
As discussed in~\cite{benaych2012tsv,nadler2008fsa}, the expression in~\cite{benaych2012tsv} also exhibits a phase transition: the number of samples per ambient dimension must be sufficiently high to obtain non-zero subspace recovery (i.e., for any subspace recovery to occur).
This paper generalizes the expression in~\cite{benaych2012tsv} to heteroscedastic noise; homoscedastic noise is a special case and is discussed in Section~\ref{sct:special}.
Once again, (asymptotic) consistency is obtained when the number of samples per ambient dimension tends to infinity, and there is a phase transition between zero recovery and non-zero recovery.

PCA is known to generally perform well in the presence of low to moderate homoscedastic noise and in the presence of missing data~\cite{chatterjee2015meb}. When the noise is standard normal, PCA gives the maximum likelihood (ML) estimate of the subspace~\cite{tipping1999ppc}.
In general,~\cite{tipping1999ppc} proposes finding the ML estimate via expectation maximization.
Conventional PCA is not an ML estimate of the subspace for heteroscedastic data, but it remains a natural choice in applications where we might expect noise to be heteroscedastic but hope it is ``close enough'' to being homoscedastic.
Even for mostly homoscedastic data, however, PCA performs poorly when the heteroscedasticity is due to gross errors (i.e., outliers)~\cite{devlin1981reo,huber1981rs,jolliffe1986pca}, which has motivated the development and analysis of robust variants; see~\cite{candes2011rpc,chandrasekaran2011rsi,croux2005hbe,he2012igo,he2014orb,lerman2015rco,qiu2014rpc,xu2012rpv,zhan2016oao} and their corresponding bibliographies.
This paper provides expressions for asymptotic recovery that enable rigorous understanding of the impact of heteroscedasticity.

The generalized spiked covariance model, proposed and analyzed in~\cite{bai2012ose} and~\cite{yao2015lsc}, generalizes homoscedastic noise in an alternate way.
It extends the Johnstone spiked covariance model~\cite{johnstone2001otd,johnstone2009oca} (a particular homoscedastic setting) by using a population covariance that allows, among other things, non-uniform noise variances {\em within} each sample.
Non-uniform noise variances within each sample may arise, for example, in applications where sample vectors are formed by concatenating the measurements of intrinsically different quantities. This paper considers data with non-uniform noise variances {\em across} samples instead; we model noise variances {\em within} each sample as uniform. Data with non-uniform noise variances across samples arise, for example, in applications where samples come from heterogeneous sources, some of which are better quality (i.e., lower noise) than others.
See Section~\ref{sct:spike} of the Online Supplement for a more detailed discussion of connections to spiked covariance models.

Our previous work~\cite{hong2016tat} analyzed the subspace recovery of PCA for heteroscedastic noise but was limited to real-valued data coming from a random one-dimensional subspace where the number of samples exceeded the data dimension.
This paper extends that analysis to the more general setting of real- or complex-valued data coming from a deterministic low-dimensional subspace where the number of samples no longer needs to exceed the data dimension.
This paper also extends the analysis of~\cite{hong2016tat} to include the recovery of the underlying subspace amplitudes and coefficients.
In both works, we use the main results of~\cite{benaych2012tsv} to obtain initial expressions  relating asymptotic recovery to the limiting noise singular value distribution.

The main results of~\cite{nadler2008fsa} provide non-asymptotic results (i.e., probabilistic approximation results for finite samples in finite dimension) for homoscedastic noise limited to the special case of one-dimensional subspaces.
Signal-dependent noise was recently considered in~\cite{vaswani2016cpp}, where they analyze the performance of PCA and propose a new generalization of PCA that performs better in certain regimes.
A recent extension of~\cite{benaych2012tsv} to linearly reduced data is presented in~\cite{dobriban2016pca} and may be useful for analyzing weighted variants of PCA.
Such analyses are beyond the scope of this paper, but are interesting avenues for further study.

\subsection{Organization of the paper} \label{sct:organization}
Section~\ref{sct:result}
describes the model we consider and states the main results: simplified expressions for asymptotic PCA recovery and the fact that PCA performance is best (for a fixed average noise variance) when the noise is homoscedastic.
Section~\ref{sct:analysis} uses the main results to provide a qualitative
analysis of how the model parameters (e.g., samples per ambient dimension and the distribution of noise variances) affect PCA performance under heteroscedastic noise.
Section~\ref{sct:experiment} compares the asymptotic recovery with non-asymptotic (i.e., finite) numerical simulations. The simulations demonstrate good
agreement as the ambient dimension and number of samples grow large; when
these values are small the asymptotic recovery and simulation differ but have the
same general behavior.
Sections~\ref{sct:proof} and~\ref{sct:bound} prove the main results.
Finally, Section~\ref{sct:discussion} discusses the findings and describes
avenues for future work.

\section{Main results} \label{sct:result}

\subsection{Model for heteroscedastic data} \label{sct:model}
We model $n$ heteroscedastic sample vectors
$y_{1},\ldots ,y_{n}\in \bC^{d}$
from a $k$-dimensional subspace as
\begin{equation}
y_{i}=\Ut\Tht\zt_i+\eta_{i}\varepsilon_{i}
= \sum_{j=1}^k \tht_j \ut_j (\zt^{(j)}_i)^* + \eta_i \varepsilon_i.
\label{eq:model}
\end{equation}
The following are deterministic:
\begin{itemize}
\item [] $\Ut=(\ut_1, \dots, \ut_k) \in \bC^{d\times k}$ forms an orthonormal basis for the subspace,
\item [] $\Tht=\diag(\tht_1,\dots,\tht_k) \in \bR_+^{k \times k}$ is a diagonal matrix of amplitudes,
\item [] $\eta_i \in \{\sigma_1,\dots,\sigma_L\}$ are each one of $L$ noise standard deviations $\sigma_1,\dots,\sigma_L$,
\end{itemize}
and we define $n_1$ to be the number of samples with $\eta_i=\sigma_1$, $n_2$ to be the number of samples with $\eta_i = \sigma_2$ and so on, where $n_1+\cdots+n_L=n$.

The following are random and independent:
\begin{itemize}
\item [] $\zt_i \in \bC^k$ are iid sample coefficient vectors that have iid entries with mean $\bE (\zt_{ij}) = 0$, variance $\bE|\zt_{ij}|^2 = 1$, and a distribution satisfying the log-Sobolev inequality~\cite{anderson2009ait},
\item [] $\varepsilon_i \in \bC^d$ are unitarily invariant iid noise vectors that have iid entries with mean $\bE (\varepsilon_{ij}) = 0$, variance $\bE|\varepsilon_{ij}|^2=1$ and bounded fourth moment $\bE|\varepsilon_{ij}|^4 < \infty$,
\end{itemize}
and we define the $k$ (component) coefficient vectors $\zt^{(1)},\dots,\zt^{(k)} \in \bC^n$ such that the $i$th entry of $\zt^{(j)}$ is $\zt^{(j)}_i=(\zt_{ij})^*$, the complex conjugate of the $j$th entry of $\zt_i$.
Defining the coefficient vectors in this way is convenient for stating and proving the results that follow, as they more naturally correspond to right singular vectors of the data matrix formed by concatenating $y_1,\dots,y_n$ as columns.

The model extends the Johnstone spiked covariance model~\cite{johnstone2001otd,johnstone2009oca}
by incorporating heteroscedasticity (see Section~\ref{sct:spike} of the Online Supplement for a detailed discussion).
We also allow complex-valued data, as it is of interest in important signal processing applications such as medical imaging; for example, data obtained in magnetic resonance imaging are complex-valued.

\begin{remark} \label{rm:unitaryinv}
\em
By unitarily invariant, we mean that left multiplication of~$\varepsilon_i$ by any unitary matrix does not change the joint distribution of its entries.
As in our previous work~\cite{hong2016tat}, this assumption can be dropped if instead the subspace~$\Ut$ is randomly drawn according to either the ``orthonormalized model'' or  ``iid model'' of~\cite{benaych2012tsv}. Under these models, the subspace~$\Ut$ is randomly chosen in an isotropic manner.
\end{remark}

\begin{remark} \label{rm:gaussian}
\em
The above conditions are satisfied, for example, when the entries $\zt_{ij}$ and $\varepsilon_{ij}$ are circularly symmetric complex normal $\cCN(0,1)$ or real-valued normal $\cN(0,1)$.
Rademacher random variables (i.e., $\pm 1$ with equal probability) are another choice for coefficient entries $z_{ij}$;
see Section 2.3.2 of~\cite{anderson2009ait} for discussion of the log-Sobolev inequality.
We are unaware of non-Gaussian distributions satisfying all conditions for noise entries $\varepsilon_{ij}$, but as noted in Remark~\ref{rm:unitaryinv}, unitary invariance can be dropped if we assume the subspace is randomly drawn as in~\cite{benaych2012tsv}.
\end{remark}

\begin{remark}
\em
The assumption that noise entries $\varepsilon_{ij}$ are identically distributed with bounded fourth moment can be relaxed when they are real-valued as long as an aggregate of their tails still decays sufficiently quickly, i.e., as long as they satisfy Condition~1.3 from~\cite{pan2010sco}.
In this setting, the results of~\cite{pan2010sco} replace those of~\cite{bai2010sao} in the proof.
\end{remark}

\subsection{Simplified expressions for asymptotic recovery} \label{sct:rs}
The following theorem describes how well the PCA estimates $\uh_1, \ldots, \uh_k$, $\thh_1, \ldots, \thh_k$ and $\zh^{(1)}, \ldots, \zh^{(k)}$ recover the underlying subspace basis $\ut_1, \ldots, \ut_k$, subspace amplitudes $\tht_1, \ldots, \tht_k$ and coefficient vectors $\zt^{(1)}, \ldots, \zt^{(k)}$, as a function of the sample-to-dimension ratio $n/d \to c >0$, the subspace amplitudes $\tht_1, \ldots, \tht_k$, the noise variances $\sigma_1^2, \ldots, \sigma_L^2$ and corresponding proportions $n_\ell/n \to p_\ell$ for each $\ell \in \{ 1, \ldots, L\}$.
One may generally expect performance to improve with increasing sample-to-dimension ratio and subspace amplitudes; Theorem~\ref{thm:rs} provides the precise dependence on these parameters as well as on the noise variances and their proportions.

\begin{theorem}[Recovery of individual components] \label{thm:rs}
Suppose that the sample-to-dimension ratio $n/d \to c > 0$ and the noise variance proportions $n_\ell/n \to p_\ell$ for $\ell \in \{1,\dots,L\}$ as $n,d \to \infty$.
Then the $i$th PCA amplitude $\thh_i$ is such that
\begin{equation} \label{eq:rstheta}
\thh_i^2 \asto
\frac{1}{c}\max( \alpha,\beta_i) \left\{ 1+c\sum_{\ell=1}^L\frac{p_\ell\sigma_\ell^2}{\max (\alpha,\beta_i) -\sigma_\ell^2}\right\},
\end{equation}
where $\alpha$ and $\beta_i$ are, respectively, the largest real roots of
\begin{align}
A(x) = 1-c \sum_{\ell=1}^L \frac{p_\ell \sigma_\ell^4}{(x-\sigma_\ell^2)^2}, \quad
B_i(x) = 1-c\tht_i^2 \sum_{\ell=1}^L
\frac{p_\ell}{x-\sigma_\ell^2}.
\label{eq:ABdef}
\end{align}
Furthermore, if $A(\beta_i) > 0$, then the $i$th principal component $\uh_i$ is such that
\begin{align}
\label{eq:rs}
\left|\langle \uh_i,\Span\{\ut_j:\tht_j  =  \tht_i\} \rangle\right|^2
\asto \frac{A(\beta_i)}{\beta_i B_i'(\beta_i)} , \quad
\left|\langle \uh_i,\Span\{\ut_j:\tht_j\neq \tht_i\} \rangle\right|^2
\asto 0 ,
\end{align}
the normalized score vector $\zh^{(i)}/\sqrt{n}$ is such that
\begin{align}
\left|\left\langle \frac{\zh^{(i)}}{\sqrt{n}},\Span\{\zt^{(j)}:\tht_j  =  \tht_i\} \right\rangle\right|^2
&\asto
\frac{A(\beta_i)}{c\{\beta_i+(1-c)\tht_i^2\} B_i'(\beta_i)}, \label{eq:rsright} \\
\left|\left\langle \frac{\zh^{(i)}}{\sqrt{n}},\Span\{\zt^{(j)}:\tht_j\neq \tht_i\} \right\rangle\right|^2
&\asto 0,\nonumber
\end{align}
and
\begin{equation}
\sum_{j:\tht_j=\tht_i}
\langle \uh_i,\ut_j \rangle \left
\langle \frac{\zh^{(i)}}{\sqrt{n}},\frac{\zt^{(j)}}{\|\zt^{(j)}\|} \right\rangle^*
\asto
\frac{A(\beta_i)}{\sqrt{c\beta_i \{ \beta_i+(1-c)\tht_i^2\} } B_i'(\beta_i)} \label{eq:rsmix}
. \end{equation}
\end{theorem}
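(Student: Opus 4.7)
The plan is to reduce the theorem to the low-rank perturbation theory of random rectangular matrices, and then exploit the structure of the heteroscedastic noise distribution to collapse the resulting implicit Stieltjes-transform expressions into the closed forms stated in~\eqref{eq:rstheta}--\eqref{eq:rsmix}.

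Stacking the samples as columns of a data matrix $\Y=(y_1,\dots,y_n)$ and writing $\LP=\diag(\eta_1,\dots,\eta_n)$, the model becomes
\begin{equation*}
\frac{1}{\sqrt{n}}\Y = \sum_{j=1}^k \frac{\tht_j\|\zt^{(j)}\|}{\sqrt{n}}\,\ut_j\left(\frac{\zt^{(j)}}{\|\zt^{(j)}\|}\right)^{\CT} + \frac{1}{\sqrt{n}}(\varepsilon_1,\dots,\varepsilon_n)\LP,
\end{equation*}
a rank-$k$ perturbation of the scaled heteroscedastic noise matrix $\tilde\Eta:=(\varepsilon_1,\dots,\varepsilon_n)\LP/\sqrt{n}$. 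The strong law gives $\|\zt^{(j)}\|/\sqrt{n}\asto 1$ and $\langle\zt^{(j)},\zt^{(j')}\rangle/n\asto 0$ for $j\neq j'$, so the perturbation is asymptotically a rank-$k$ signal with left singular vectors $\ut_j$, right singular vectors $\zt^{(j)}/\|\zt^{(j)}\|$, and singular values converging to $\tht_j$. By Bai--Silverstein~\cite{bai2010sao}, the empirical singular value distribution of $\tilde\Eta$ converges almost surely to a deterministic measure $\mu_c$ determined by $c$ and the variance mixture $\sum_\ell p_\ell\delta_{\sigma_\ell^2}$, whose Stieltjes transform obeys a fixed-point equation that implicitly defines $\mu_c$.

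Next, I would invoke the rectangular low-rank perturbation theorem of Benaych-Georges and Nadakuditi~\cite{benaych2012tsv}, whose hypotheses are satisfied by the log-Sobolev assumption on the coefficients and the unitary invariance of the noise (cf.\ Remark~\ref{rm:unitaryinv}). That theorem gives $\thh_i\asto\rho_i$, where $\rho_i$ is the largest real $x$ above the edge of $\mu_c$ satisfying $\tht_i^2D(x)=1$ for the $D$-transform $D=\phi\bar\phi$ of $\mu_c$ when such an $x$ exists, and equals the spectral edge otherwise; it also expresses the squared left and right singular-vector cosines, together with their signed product, as explicit ratios involving $\phi(\rho_i)$, $\bar\phi(\rho_i)$ and $D'(\rho_i)$. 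The concentration of the left cosine onto $\Span\{\ut_j:\tht_j=\tht_i\}$ with a vanishing orthogonal component follows from the joint treatment of repeated singular values in~\cite{benaych2012tsv}, and the same argument applies to the right singular vectors.

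The main obstacle, and principal contribution of the proof, is to replace the implicit quantities $\phi(\rho_i)$, $\bar\phi(\rho_i)$ and $D'(\rho_i)$ by the explicit rational functions of $\sigma_\ell^2$, $p_\ell$, $c$ and $\beta_i$ needed for~\eqref{eq:rstheta}--\eqref{eq:rsmix}. The strategy is to substitute the Silverstein fixed-point identity for the Stieltjes transform of $\mu_c$ into the integrals defining $\phi$ and $\bar\phi$ at points outside the support, and to show that these transforms reduce to finite sums in $1/(x-\sigma_\ell^2)$. One then identifies the square of the edge of $\mu_c$ with the largest real root $\alpha$ of $A(x)$ and the equation $\tht_i^2D(x)=1$ with $B_i(x)=0$, yielding $\rho_i^2=\max(\alpha,\beta_i)$ and, after rearrangement of $\phi(\rho_i)$, the amplitude formula~\eqref{eq:rstheta}. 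The derivative $D'(\rho_i)$ should factorize into an expression proportional to $A(\beta_i)B_i'(\beta_i)$, so that after cancellation the BGN ratios produce the stated $1/\{\beta_iB_i'(\beta_i)\}$, $1/[c\{\beta_i+(1-c)\tht_i^2\}B_i'(\beta_i)]$ and $1/[\sqrt{c\beta_i\{\beta_i+(1-c)\tht_i^2\}}\,B_i'(\beta_i)]$ factors. The hardest step is precisely this algebraic reduction: the aspect-ratio correction that distinguishes $\bar\phi$ from $\phi$ is what generates the factor $\beta_i+(1-c)\tht_i^2$ present in the score and mixed expressions but absent from the principal-component expression, and cleanly eliminating the implicit Stieltjes transform in favor of the finite sums requires careful manipulation of the Silverstein identity.
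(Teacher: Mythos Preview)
Your approach is essentially that of the paper: reduce to the Benaych-Georges--Nadakuditi framework via~\cite{bai2010sao,benaych2012tsv}, and then exploit the Silverstein fixed-point identity to collapse the implicit Stieltjes-transform quantities into the rational functions $A$ and $B_i$. The paper executes this through a specific change of variables $\psi(z)=cz/\varphi(z)$: the Silverstein equation becomes the algebraic relation $Q(\psi(z),z)=0$ with $Q(s,z)=cz^2/s^2+(c-1)/s-c\sum_\ell p_\ell/(s-\sigma_\ell^2)$, from which one reads off the inverse $\psi^{-1}$ explicitly and proves $\psi(b^+)=\alpha$, $\psi(\rho_i)=\beta_i$. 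This device is exactly the ``careful manipulation of the Silverstein identity'' you anticipate as the hardest step.

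Two identifications in your sketch are slightly off and worth correcting before you write the details. First, it is not $\rho_i^2$ that equals $\max(\alpha,\beta_i)$; rather $\psi(\rho_i)=\beta_i$ and $\psi(b^+)=\alpha$, and the amplitude formula~\eqref{eq:rstheta} is obtained by \emph{inverting} $\psi$, namely $\rho_i^2=\{\psi^{-1}(\beta_i)\}^2=(\beta_i/c)\{1+c\sum_\ell p_\ell\sigma_\ell^2/(\beta_i-\sigma_\ell^2)\}$. Second, $D'(\rho_i)$ does not factor as a product $A(\beta_i)B_i'(\beta_i)$; one finds instead $D'(\rho_i)/\rho_i=-(2c/\tht_i^2)\,B_i'(\beta_i)/A(\beta_i)$, and it is this \emph{ratio}, combined with the factor $1/\psi(\rho_i)=1/\beta_i$ coming from $\varphi(\rho_i)=c\rho_i/\psi(\rho_i)$, that yields $A(\beta_i)/\{\beta_iB_i'(\beta_i)\}$ in~\eqref{eq:rs}. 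These are execution details rather than a change of strategy; the route you outline is the correct one.
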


\noindent Section~\ref{sct:proof} presents the proof of Theorem~\ref{thm:rs}.
The expressions can be easily and efficiently computed. The hardest part is finding the largest roots of the univariate rational functions $A(x)$ and $B_i(x)$, but off-the-shelf solvers can do this efficiently. See~\cite{hong2016tat} for an example of similar calculations.

The projection $\vert\langle \uh_{i},\Span\{
\ut_{j}:\tht_{j}=\tht_{i}\} \rangle \vert ^{2}$
in Theorem~\ref{thm:rs} is the square cosine principal angle between the $i$th principal
component $\uh_{i}$ and the span of the basis elements with
subspace amplitudes equal to $\tht_{i}$. When the subspace amplitudes are
distinct, $\vert \langle \uh_{i},\Span\{ \ut_{j}:\tht_{j}=\tht_{i}\} \rangle
\vert ^{2}=\left\vert \left\langle \uh_{i},\ut_{i}\right\rangle \right\vert ^{2}$ is the square cosine angle between $\uh_{i}$ and $\ut_{i}$. This value is related by a constant to the squared error between the two (unit norm) vectors and is one among several natural performance metrics for subspace estimation.
Similar observations hold for $|\langle \zh^{(i)}/\sqrt{n},\Span\{\zt^{(j)}:\tht_j = \tht_i\} \rangle|^2$. Note that $\zh^{(i)}/\sqrt{n}$ has unit norm.

The expressions~\eqref{eq:rs},~\eqref{eq:rsright} and~\eqref{eq:rsmix}
apply only if $A(\beta_i)>0$.
The following conjecture predicts a phase transition at $A(\beta_i)=0$ so that asymptotic recovery is zero for $A(\beta_i) \leq 0$.

\begin{conjecture}[Phase transition] \label{conj:rs}
Suppose (as in Theorem~\ref{thm:rs}) that the sample-to-dimension ratio $n/d \to c > 0$ and the noise variance proportions $n_\ell/n \to p_\ell$  for $\ell \in \{1,\dots,L\}$ as $n,d \to \infty$.
If $A(\beta_i) \leq 0$, then the $i$th principal component $\uh_{i}$ and the normalized score vector $\zh^{(i)}/\sqrt{n}$ are such that
\begin{align*}
|\langle \uh_i,\Span\{\ut_1,\dots,\ut_k\} \rangle|^2
\asto 0, \quad
\left|\left\langle \frac{\zh^{(i)}}{\sqrt{n}},\Span\{\zt^{(1)},\dots,\zt^{(k)}\} \right\rangle\right|^2
\asto 0 \nonumber.
\end{align*}
\end{conjecture}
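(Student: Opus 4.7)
\noindent \textbf{Proof proposal for Conjecture~\ref{conj:rs}.}

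The plan is to extend the random-matrix framework of Benaych-Georges and Nadakuditi~\cite{benaych2012tsv}, already invoked for Theorem~\ref{thm:rs}, to handle the below-threshold regime. Write the data matrix $\Y$ as a rank-$k$ additive perturbation of the noise matrix $\Yn$, whose columns are $\eta_i \varepsilon_i$. The overall strategy has three parts: (i) characterize the upper edge $b$ of the limiting singular value distribution of $\Yn/\sqrt{n}$ and translate the hypothesis $A(\beta_i)\leq 0$ into the statement that the signal is too weak to induce an outlier; (ii) establish a ``no outlier'' result for the heteroscedastic noise so that the top singular values of $\Yn/\sqrt{n}$ concentrate almost surely at $b$; and (iii) apply the eigenvector half of the Benaych-Georges-Nadakuditi phase transition to obtain the orthogonality claim.

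For step (i), I would analyze $\Yn \Yn^{\CT}/n$. Since $\Yn$ has independent columns whose population covariances are $\sigma_\ell^2 \II_d$ with proportions $p_\ell$, the empirical spectral distribution converges almost surely to a compactly supported Marchenko--Pastur-type measure $\mu$ whose Stieltjes transform satisfies an implicit equation in $c$, $p_\ell$ and $\sigma_\ell^2$. The upper edge $b^2$ of $\mu$ is determined by the critical-point condition on the inverse Stieltjes map, and after algebraic manipulation this condition is exactly $A(b^2)=0$ in the notation of~\eqref{eq:ABdef}, i.e., $b^2 = \alpha$. Analogously, $B_i(\beta_i)=0$ is the D-transform equation locating the outlier associated with $\tht_i$, so $A(\beta_i)\leq 0$ forces $\beta_i \leq \alpha = b^2$, which is the precise subcritical condition.

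For step (ii), I would prove that the largest singular value of $\Yn/\sqrt{n}$ converges almost surely to $b$. For iid noise this is the classical Yin--Bai--Krishnaiah result used throughout~\cite{bai2010sao,benaych2012tsv}; in our setting the columns of $\Yn$ remain independent with bounded fourth moment, and splitting them into the $L$ homogeneous sub-ensembles indexed by $\sigma_\ell$ reduces the bound to a combination of classical trace-moment estimates with controlled variance profile. Given this no-outlier fact, a Weyl-interlacing argument shows that under $A(\beta_i)\leq 0$ the sample value $\thh_i$ must also satisfy $\thh_i^2 \asto b^2$: the rank-$k$ perturbation cannot create a new outlier because the relevant D-transform equation $B_i(x)=0$ has no root outside $[0,\alpha]$. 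With $\thh_i$ pinned at the bulk edge, the Cauchy-type resolvent representations of $\langle \uh_i,\ut_j\rangle$ and $\langle \zh^{(i)}/\sqrt{n},\zt^{(j)}/\|\zt^{(j)}\|\rangle$ used in~\cite{benaych2012tsv} vanish almost surely in the limit (the outlier pole that produced the numerator $A(\beta_i)$ in Theorem~\ref{thm:rs} is now absent), which gives the desired asymptotic orthogonality to both the left and the right signal subspaces.

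The main obstacle is step (ii). Off-the-shelf no-outlier theorems typically assume identically distributed entries, while our variance profile $\{\sigma_\ell,p_\ell\}$ requires a block-by-block adaptation of the trace-moment or Stieltjes-transform proof; this is delicate but should be tractable under our assumptions. A secondary subtlety is the boundary case $A(\beta_i)=0$: here the signal sits exactly at the critical threshold, fluctuations of Tracy--Widom type at the edge become relevant, and upgrading convergence in probability to an almost-sure statement needs additional care. This boundary regime is plausibly why the authors state the result as a conjecture rather than as a theorem.
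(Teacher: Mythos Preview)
The paper does not prove this statement; it is left as an open \emph{conjecture}. The authors say explicitly, just after stating it, that ``proving it in general would involve showing that the singular values of the matrix whose columns are the noise vectors exhibit repulsion behavior; see Remark~2.13 of~\cite{benaych2012tsv}.'' So there is no paper proof to compare against, only an identification of the missing ingredient.

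Your overall architecture (edge identification, no-outlier, then BGN eigenvector phase transition) is reasonable, but you have misidentified where the gap is. You flag step~(ii), the no-outlier result, as the main obstacle. In fact this step is already done in the paper: Section~\ref{sct:initial} invokes Theorem~4.3 and Corollary~6.6 of~\cite{bai2010sao} to conclude that the largest singular value of the heteroscedastic noise matrix $\X = (\R\E/\sqrt{n})\Eta$ converges almost surely to the supremum $b$ of the support of $\mu_\X$. The variance profile causes no difficulty for Bai--Silverstein's results, which are stated at exactly that level of generality; your proposed block-by-block trace-moment argument is unnecessary.

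The genuine gap is your step~(iii), which you treat as routine. The below-threshold eigenvector statement in Theorem~2.10 of~\cite{benaych2012tsv} is \emph{not} a consequence of the no-outlier result alone: it requires an additional hypothesis on the noise singular values (cf.\ their Assumption~2.5 and Remark~2.13), loosely that consecutive singular values near the edge separate, i.e., exhibit repulsion. Without it, the resolvent representation of $\langle \uh_i,\ut_j\rangle$ does not automatically vanish: when $\thh_i$ sticks to the bulk edge, the perturbed singular vector can in principle align with a cluster of noise singular vectors accumulating there, and the Cauchy-type formula you appeal to does not control that scenario. Your sentence ``the outlier pole that produced the numerator $A(\beta_i)$ \ldots\ is now absent'' is where the argument actually breaks. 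Establishing repulsion for the mixture-of-variances ensemble $\E\Eta$ is precisely the open problem the authors point to; it is known for Gaussian homoscedastic noise~\cite{paul2007aos} and for $k=1$~\cite{hong2016tat}, but not in the generality of the conjecture.
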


\noindent This conjecture is true for a data model having Gaussian coefficients and homoscedastic Gaussian noise as shown in~\cite{paul2007aos}.
It is also true for a one-dimensional subspace (i.e., $k=1$) as we showed in~\cite{hong2016tat}.
Proving it in general would involve showing that the singular values of the matrix whose columns are the noise vectors exhibit repulsion behavior; see Remark~2.13 of~\cite{benaych2012tsv}.

\subsection{Homoscedastic noise as a special case} \label{sct:special}
For homoscedastic noise with variance $\sigma^2$,
$A(x) = 1-c\sigma^4/(x-\sigma^2)^2$ and $B_i(x) = 1-c\tht_i^2/(x-\sigma^2)$. The largest real roots of these functions are, respectively,
$\alpha=(1+\sqrt{c})\sigma^2$ and $\beta_i=\sigma^2+c\tht_i^2$.
Thus the asymptotic PCA amplitude~\eqref{eq:rstheta} becomes
\begin{equation} \label{eq:rsthetah}
\thh_i^2 \asto
\begin{cases}
\tht_i^2 \{ 1+\sigma^2/(c\tht_i^2) \} (1+ \sigma^2/\tht_i^2 ) & \text{if } c\tht_{i}^{4} > \sigma^4, \\
\sigma^2 (1+ 1/\sqrt{c} )^2
& \text{otherwise}.
\end{cases}
\end{equation}
Further, if $c\tht_{i}^{4} > \sigma^4$, then the non-zero portions of asymptotic subspace recovery~\eqref{eq:rs} and coefficient recovery~\eqref{eq:rsright} simplify to
\begin{align}
\left|\langle \uh_i,\Span\{\ut_j:\tht_j  =  \tht_i\} \rangle\right|^2
&\asto \frac{c-\sigma^4/\tht_i^4}{c+\sigma^2/\tht_i^2}, \label{eq:rsh} \\
\left|\left\langle \frac{\zh^{(i)}}{\sqrt{n}},\Span\{\zt^{(j)}:\tht_j  =  \tht_i\} \right\rangle\right|^2
&\asto
\frac{c-\sigma^4/\tht_i^4}{c(1+\sigma^2/\tht_i^2)} \nonumber,
\end{align}
These limits agree with the homoscedastic results in~\cite{benaych2012tsv,biehl1994smo,johnstone2009oca,nadler2008fsa,paul2007aos}. As noted in Section~\ref{sct:rs}, Conjecture~\ref{conj:rs} is known to be true when the coefficients are Gaussian and the noise is both homoscedastic and Gaussian, in which case~\eqref{eq:rsh} becomes
\begin{align*}
\left|\langle \uh_i,\Span\{\ut_j:\tht_j  =  \tht_i\} \rangle\right|^2
&\asto \max \left(0,\frac{c-\sigma^4/\tht_i^4}{c+\sigma^2/\tht_i^2}\right) ,  \\
\left|\left\langle \frac{\zh^{(i)}}{\sqrt{n}},\Span\{\zt^{(j)}:\tht_j  =  \tht_i\} \right\rangle\right|^2
&\asto
\max\left\{0,\frac{c-\sigma^4/\tht_i^4}{c(1+\sigma^2/\tht_i^2)}\right\} \nonumber.
\end{align*}
See Section~2 of~\cite{johnstone2009oca} and Section~2.3 of~\cite{paul2007aos} for a discussion of this result.

\subsection{Bias of the PCA amplitudes} \label{sct:ampbias}
The simplified expression in~\eqref{eq:rstheta} enables us to immediately make two observations about the recovery of the subspace amplitudes~$\tht_1,\dots,\tht_k$ by the PCA amplitudes~$\thh_1,\dots,\thh_k$.

\begin{remark}[Positive bias in PCA amplitudes] \label{rm:bias}
\em
The largest real root $\beta_i$ of $B_i(x)$ is greater than $\max_\ell (\sigma_\ell^2)$. Thus $1/(\beta_i-\sigma_\ell^2) > 1/\beta_i$ for $\ell \in \{1,\dots,L\}$ and so evaluating~\eqref{eq:ABdef} at $\beta_i$ yields
\begin{equation*}
0=B_i(\beta_i)
=1-c\tht_i^2 \sum_{\ell=1}^L \frac{p_\ell}{\beta_i-\sigma_\ell^2}
<1-c\tht_i^2 \frac{1}{\beta_i}.
\end{equation*}
As a result, $\beta_i >c\tht_i^2$, so the asymptotic PCA amplitude~\eqref{eq:rstheta} exceeds the subspace amplitude, i.e., $\thh_i$ is positively biased and is thus an inconsistent estimate of $\tht_i$.
This is a general phenomenon for noisy data and motivates asymptotically optimal shrinkage in~\cite{nadakuditi2014oaa}.
\end{remark}

\begin{remark}[Alternate formula for amplitude bias] \label{cor:rstheta}
\em
If $A(\beta_i) \geq 0$, then $\beta_i \geq \alpha$ because $A(x)$ and $B_i(x)$ are both increasing functions for $x > \max_\ell(\sigma_\ell^2)$. Thus, the asymptotic amplitude bias is
\begin{align} \label{eq:rstheta_alt}
\frac{\thh_i^2}{\tht_i^2} \asto
\frac{\beta_i}{c\tht_i^2}\left(1+c\sum_{\ell=1}^L\frac{p_\ell\sigma_\ell^2}{\beta_i-\sigma_\ell^2}\right)
&=\frac{\beta_i}{c\tht_i^2}\left\{ 1+c\sum_{\ell=1}^Lp_\ell\left(-1+\frac{\beta_i}{\beta_i-\sigma_\ell^2}\right)\right\} \nonumber \\
&=\frac{\beta_i}{c\tht_i^2}\left(1+\beta_ic\sum_{\ell=1}^L\frac{p_\ell}{\beta_i-\sigma_\ell^2}-c\right)
=\frac{\beta_i}{c\tht_i^2} \left[1+\frac{\beta_i}{\tht_i^2}\{1-B_i(\beta_i)\}-c\right] \nonumber\\
&=\frac{\beta_i}{c\tht_i^2}\left(1+\frac{\beta_i}{\tht_i^2}-c\right)
=1+\left(\frac{\beta_i}{c\tht_i^2}-1\right)\left(\frac{\beta_i}{\tht_i^2}+1\right),
\end{align}
where we have applied~\eqref{eq:rstheta}, divided the summand with respect to $\sigma_\ell^2$, used the facts that $p_1+\cdots+p_L=1$ and  $B_i(\beta_i)=0$, and finally factored.
The expression~\eqref{eq:rstheta_alt} shows that the positive bias is an increasing function of $\beta_i$ when $A(\beta_i) \geq 0$.
\end{remark}

\subsection{Overall subspace  and signal recovery}
Overall subspace recovery is more useful than individual component recovery when subspace amplitudes are equal and so individual basis elements are not identifiable.
It is also more relevant when we are most interested in recovering or denoising low-dimensional signals in a subspace.
Overall recovery of the low-dimensional signal, quantified here by mean square error, is useful for understanding  how well PCA ``denoises'' the data taken as a whole.

\begin{corollary}[Overall recovery]\label{cor:rs}
Suppose (as in Theorem~\ref{thm:rs}) that the sample-to-dimension ratio $n/d \to c > 0$ and the noise variance proportions $n_\ell/n \to p_\ell$  for $\ell \in \{1,\dots,L\}$ as $n,d \to \infty$.
If $A(\beta_{1}) ,\dots ,A(\beta_{k})>0$, then the subspace estimate $\Uh= (\uh_1 , \dots , \uh_k) \in \bC^{d \times k}$ from PCA is such that
\begin{equation} \label{eq:cor_rs}
\frac{1}{k}\, \| \Uh^\CT \Ut\|_F^2
\asto \frac{1}{k}\sum_{i=1}^{k}\frac{A\left(
\beta_{i}\right) }{\beta_{i}B_{i}^{\prime }\left( \beta_{i}\right) },
\end{equation}
and the mean square error is
\begin{align} \label{eq:mse}
\frac{1}{n} \sum_{i=1}^n &\left\|\Ut\Tht\zt_i - \Uh\Thh\zh_i\right\|_{2}^2 \asto
\sum_{i=1}^k 2\left\{\tht_i^2 - \frac{A(\beta_i)}{cB'(\beta_i)}\right\} +\left(\frac{\beta_i}{c\tht_i^2}-1\right)(\beta_i+\tht_i^2 ),
\end{align}
where $A(x)$, $B_{i}(x) $ and $\beta_{i}$ are as in Theorem~\ref{thm:rs},
and $\zh_i$ is the vector of score entries for the $i$th sample.
\end{corollary}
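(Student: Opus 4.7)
The plan is to derive both claims from Theorem~\ref{thm:rs} by writing the overall quantities as sums of individual-component inner products. For the subspace recovery~\eqref{eq:cor_rs}, I would expand $\|\Uh^\CT\Ut\|_F^2 = \sum_{j=1}^k\sum_{i=1}^k|\langle\uh_j,\ut_i\rangle|^2$, split the inner $i$-sum into same-amplitude and different-amplitude parts, and invoke Theorem~\ref{thm:rs}: the same-amplitude part $\asto A(\beta_j)/\{\beta_j B_j'(\beta_j)\}$ and the different-amplitude part $\asto 0$. Summing in $j$ and dividing by $k$ then yields~\eqref{eq:cor_rs}.

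For the mean square error~\eqref{eq:mse}, I would collect the true signals and PCA reconstructions into matrices $X$ and $\hat X$ (with $i$th columns $\Ut\Tht\zt_i$ and $\Uh\Thh\zh_i$) and expand $\|X-\hat X\|_F^2/n = \|X\|_F^2/n + \|\hat X\|_F^2/n - (2/n)\Re\Tr(X^\CT\hat X)$. The rank-$k$ factorization $X=\sum_i\tht_i\ut_i(\zt^{(i)})^\CT$ together with orthonormality of the $\ut_i$ and the SLLN $\|\zt^{(i)}\|^2/n\asto 1$ gives $\|X\|_F^2/n \asto \sum_i\tht_i^2$. Similarly $\hat X=\sum_j\thh_j\uh_j(\zh^{(j)})^\CT$, with orthonormal $\uh_j$ and $\|\zh^{(j)}\|^2=n$, gives $\|\hat X\|_F^2/n=\sum_j\thh_j^2$, which by Theorem~\ref{thm:rs} and the alternate bias formula~\eqref{eq:rstheta_alt} tends to $\sum_j\{\tht_j^2+(\beta_j/(c\tht_j^2)-1)(\beta_j+\tht_j^2)\}$. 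For the cross term, the same factorizations give $\Tr(X^\CT\hat X)=\sum_{i,j}\tht_i\thh_j\langle\uh_j,\ut_i\rangle^*\langle\zh^{(j)},\zt^{(i)}\rangle$; after normalizing with $\|\zt^{(i)}\|/\sqrt{n}\asto 1$, pairs with $\tht_i\neq\tht_j$ vanish by Theorem~\ref{thm:rs}'s orthogonality, while for pairs with $\tht_i=\tht_j$ the inner $i$-sum is the complex conjugate of~\eqref{eq:rsmix}, whose real limit $A(\beta_j)/[\sqrt{c\beta_j\{\beta_j+(1-c)\tht_j^2\}}\,B_j'(\beta_j)]$ is unchanged under conjugation. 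Substituting $\thh_j^2\asto\beta_j\{\beta_j+(1-c)\tht_j^2\}/(c\tht_j^2)$ (a consequence of~\eqref{eq:rstheta} via the algebra of Remark~\ref{cor:rstheta}) and simplifying, each summand collapses to $A(\beta_j)/\{cB_j'(\beta_j)\}$, and assembling the three pieces produces~\eqref{eq:mse}.

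The main obstacle is handling the cross term when subspace amplitudes are repeated: individual principal components within a common amplitude group are not asymptotically identifiable, so products such as $\langle\uh_j,\ut_i\rangle^*\langle\zh^{(j)}/\sqrt{n},\zt^{(i)}/\|\zt^{(i)}\|\rangle$ need not converge term by term. The remedy is that the $i$-sum over each amplitude group is exactly the quantity in~\eqref{eq:rsmix}, which does converge, and the prefactors $\tht_j$, $\thh_j$, $\beta_j$ are constant within each group, so the limit is recovered by summing in $i$ first and then in $j$.
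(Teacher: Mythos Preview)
Your proposal is correct and follows essentially the same approach as the paper: for~\eqref{eq:cor_rs} you decompose $\|\Uh^\CT\Ut\|_F^2$ into same-amplitude and different-amplitude inner products and apply~\eqref{eq:rs}, and for~\eqref{eq:mse} you expand the Frobenius norm into the three pieces $\|X\|_F^2/n$, $\|\hat X\|_F^2/n$, and the cross term, handling them via the SLLN, the amplitude limit~\eqref{eq:rstheta_alt}, and the mixed limit~\eqref{eq:rsmix} respectively. The paper organizes the same computation by pulling a common $\tht_i^2$ factor out of all three pieces under a single sum (its display~\eqref{eq:mse_decomp}), but the ingredients, the treatment of the repeated-amplitude case via~\eqref{eq:rsmix}, and the final simplification $\tht_j\thh_j\cdot(\text{limit of~\eqref{eq:rsmix}})=A(\beta_j)/\{cB_j'(\beta_j)\}$ are identical.
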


\begin{proof}[Proof of Corollary~\ref{cor:rs}]
The subspace recovery can be decomposed as
\begin{equation*}
\frac{1}{k}\, \| \Uh^\CT \Ut\|_F^2
=\frac{1}{k}\, \sum_{i=1}^{k}\left\Vert \uh_{i}^{\CT }\Ut_{j:\tht_{j}=\tht_{i}}\right\Vert_{2}^{2}+\left\Vert \uh_{i}^{\CT }\Ut_{j:\tht_{j}\neq \tht_{i}}\right\Vert_{2}^{2},
\end{equation*}
where the columns of $\Ut_{j:\tht_{j}=\tht_{i}}$ are
the basis elements $\ut_{j}$ with subspace amplitude~$\tht_j$ equal to $\tht_{i}$,
and the remaining basis elements are the columns of $\Ut_{j:\tht_{j}\neq\tht_{i}}$.
Asymptotic overall subspace recovery~\eqref{eq:cor_rs} follows by noting that these terms are exactly the square cosine principal angles in~\eqref{eq:rs} of  Theorem~\ref{thm:rs}.

The mean square error can also be decomposed as
\begin{align}
\frac{1}{n} \sum_{i=1}^n \left\|\Ut\Tht\zt_i - \Uh\Thh\zh_i\right\|_{2}^2 &= \left\|\Ut\Tht\left(\frac{1}{\sqrt{n}}\Zt\right)^\CT - \Uh\Thh\left(\frac{1}{\sqrt{n}}\Zh\right)^\CT\right\|_F^2 \nonumber \\
&= \sum_{i=1}^k \tht_i^2\left[\left\|\frac{\zt^{(i)}}{\sqrt{n}}\right\|_{2}^2 + \frac{\thh_i^2}{\tht_i^2}
-2\Re \left\{ \frac{\thh_i}{\tht_i}\sum_{j=1}^k
\frac{\tht_j}{\tht_i}
\langle \uh_i,\ut_j \rangle \left
\langle \frac{\zh^{(i)}}{\sqrt{n}},\frac{\zt^{(j)}}{\sqrt{n}} \right\rangle^*\right\}\right], \label{eq:mse_decomp}
\end{align}
where
$\Zt = (\zt^{(1)},\ldots,\zt^{(k)}) \in \bC^{n\times k}$,
$\Zh = (\zh^{(1)},\ldots,\zh^{(k)}) \in \bC^{n\times k}$
and $\Re$ denotes the real part of its argument.
The first term of~\eqref{eq:mse_decomp} has almost sure limit $1$ by the law of large numbers. The almost sure limit of the second term is obtained from~\eqref{eq:rstheta_alt}.
We can disregard the summands in the inner sum for which $\tht_j \neq \tht_i$; by~\eqref{eq:rs} and~\eqref{eq:rsright} these terms have an almost sure limit of zero (the inner products both vanish). The rest of the inner sum
\begin{equation*}
\sum_{j:\tht_j=\tht_i}
\frac{\tht_j}{\tht_i}
\langle \uh_i,\ut_j \rangle \left
\langle \frac{\zh^{(i)}}{\sqrt{n}},\frac{\zt^{(j)}}{\sqrt{n}} \right\rangle^* =
\sum_{j:\tht_j=\tht_i} (1)
\langle \uh_i,\ut_j \rangle \left
\langle \frac{\zh^{(i)}}{\sqrt{n}},\frac{\zt^{(j)}}{\sqrt{n}} \right\rangle^*
\end{equation*}
has the same almost sure limit as in~\eqref{eq:rsmix} because $\|\zt^{(i)}/\sqrt{n}\|^2 \to 1$ as $n \to \infty$.
Combining these almost sure limits and simplifying yields~\eqref{eq:mse}.
\end{proof}

\subsection{Importance of homoscedasticity} \label{sct:optim}
How important is homoscedasticity for PCA? Does having some low noise data outweigh the cost of introducing heteroscedasticity?
Consider the following three settings:
\begin{itemize}
\item [1.-] All samples have noise variance $1$ (i.e., data are homoscedastic).
\item [2.-] $99\%$ of samples have noise variance $1.01$ but $1\%$ have noise variance $0.01$.
\item [3.-] $99\%$ of samples have noise variance $0.01$ but $1\%$ have noise variance $99.01$.
\end{itemize}
In all three settings, the average noise variance is $1$.
We might expect PCA to perform well in Setting~1 because it has the smallest maximum noise variance.
However, Setting~2 may seem favorable because we obtain samples with very small noise,
and suffer only a slight increase in noise for the rest.
Setting~3 may seem favorable because most of the samples have very small noise. However, we might also expect PCA to perform poorly because $1\%$ of samples have very large noise and will likely produce gross errors (i.e., outliers).
Between all three, it is not initially obvious what setting PCA will perform best in.
The following theorem shows that PCA performs best when the noise is homoscedastic, as in Setting~1.

\begin{theorem}
\label{thm:bound}
Homoscedastic noise produces the best asymptotic PCA amplitude~\eqref{eq:rstheta}, subspace recovery~\eqref{eq:rs} and coefficient recovery~\eqref{eq:rsright} in Theorem~\ref{thm:rs} for a given average noise variance $\sigmab^2=p_1\sigma_1^2+\cdots+p_L\sigma_L^2$ over all distributions of noise variances for which $A(\beta_i) > 0$.
Namely, homoscedastic noise minimizes~\eqref{eq:rstheta} (and hence the positive bias) and it maximizes~\eqref{eq:rs} and~\eqref{eq:rsright}.
\end{theorem}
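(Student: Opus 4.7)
The plan is to reduce Theorem~\ref{thm:bound} to two elementary monotonicity facts about the functions and roots appearing in Theorem~\ref{thm:rs}, both ultimately consequences of Jensen's inequality applied to the noise-variance distribution $\sigma^2 \sim \sum_\ell p_\ell \delta_{\sigma_\ell^2}$ (of mean $\sigmab^2$). Throughout I fix $i$ and write $\beta_i^\circ := \sigmab^2 + c\tht_i^2$ for the value of $\beta_i$ in the homoscedastic case with average variance $\sigmab^2$, as read off from Section~\ref{sct:special}.

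First I would show $\beta_i \geq \beta_i^\circ$ for every admissible noise distribution. The defining equation $B_i(\beta_i)=0$ reads $c\tht_i^2\,\bE\{1/(\beta_i - \sigma^2)\} = 1$, and since $s \mapsto 1/(\beta_i - s)$ is convex on $(-\infty,\beta_i)$, Jensen gives $\bE\{1/(\beta_i - \sigma^2)\} \geq 1/(\beta_i - \sigmab^2)$; rearranging yields $\beta_i - \sigmab^2 \geq c\tht_i^2$, with equality only when $\sigma^2$ is a.s.\ constant. The PCA amplitude claim then falls out immediately from Remark~\ref{cor:rstheta}: formula~\eqref{eq:rstheta_alt} writes $\thh_i^2/\tht_i^2 \asto 1 + (\beta_i/(c\tht_i^2)-1)(\beta_i/\tht_i^2+1)$, which is strictly increasing in $\beta_i$ on $(c\tht_i^2,\infty)$ (and Remark~\ref{rm:bias} guarantees $\beta_i$ lies in that interval). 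Hence $\thh_i^2$ is minimized exactly at $\beta_i = \beta_i^\circ$.

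For the subspace and coefficient recoveries I would use a two-stage reduction: freeze $\beta_i$ and minimize over the remaining freedom in the noise distribution, then optimize the resulting single-variable expression in $\beta_i$. Setting $S_1 := \sum_\ell p_\ell/(\beta_i-\sigma_\ell^2)$ and $S_2 := \sum_\ell p_\ell/(\beta_i-\sigma_\ell^2)^2$, the constraint $B_i(\beta_i)=0$ pins $S_1 = 1/(c\tht_i^2)$, and expanding $\sigma_\ell^2/(\beta_i-\sigma_\ell^2) = -1 + \beta_i/(\beta_i-\sigma_\ell^2)$ before squaring gives
\begin{equation*}
A(\beta_i) = 1 - c + 2\beta_i/\tht_i^2 - c\beta_i^2 S_2,
\qquad
\beta_i B_i'(\beta_i) = c\tht_i^2 \beta_i S_2.
\end{equation*}
Thus the subspace recovery~\eqref{eq:rs} is a ratio whose numerator is positive under $A(\beta_i)>0$ and strictly decreasing in $S_2$, while the denominator is strictly increasing in $S_2$, so the ratio strictly decreases in $S_2$. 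Since $S_2 \geq S_1^2 = 1/(c^2\tht_i^4)$ by nonnegativity of $\mathrm{Var}(1/(\beta_i-\sigma^2))$, the recovery is bounded above by its value at $S_2 = 1/(c^2\tht_i^4)$, which is a function $g(\beta_i)$ of $\beta_i$ alone. Elementary algebra then identifies $g(\beta_i)$ with the homoscedastic subspace recovery~\eqref{eq:rsh} evaluated at the effective mean variance $\beta_i - c\tht_i^2$, and a one-line derivative check (using $\beta_i > c\tht_i^2$ from Remark~\ref{rm:bias}) confirms $g$ is strictly decreasing on $(c\tht_i^2,\infty)$. Combining with Step~1, the subspace recovery is $\leq g(\beta_i) \leq g(\beta_i^\circ)$, and the last quantity is exactly the homoscedastic value~\eqref{eq:rsh} at average variance $\sigmab^2$. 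The coefficient recovery~\eqref{eq:rsright} equals~\eqref{eq:rs} times the $S_2$-independent factor $\beta_i/\{c(\beta_i+(1-c)\tht_i^2)\}$, so the same two-stage argument carries over once one performs the analogous derivative check on the homoscedastic coefficient formula from Section~\ref{sct:special}.

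The hardest part is choosing the right parametrization. A direct Lagrangian optimization of~\eqref{eq:rs} over $(p_1,\dots,p_L,\sigma_1^2,\dots,\sigma_L^2)$ subject only to $\sum_\ell p_\ell \sigma_\ell^2 = \sigmab^2$ produces unwieldy stationarity conditions in which the optimality of the delta-measure is far from transparent. Reparametrizing via $(\beta_i, S_2)$ collapses the problem into two Jensen-type inequalities and two elementary calculus checks, each of which is transparent on its own; once this parametrization is in place the remaining work is routine.
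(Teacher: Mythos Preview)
Your proposal is correct and rests on the same convexity/Jensen ingredients as the paper, but the organization differs. The paper's proof bounds the three pieces $\beta_i$, $B_i'(\beta_i)$, $A(\beta_i)$ separately---each by a direct Jensen argument, with all three bounds simultaneously tight at the homoscedastic point---and then simply observes that \eqref{eq:rs} and \eqref{eq:rsright} are increasing in $A(\beta_i)$ and decreasing in $\beta_i$ and $B_i'(\beta_i)$. Your two-stage reduction via the parametrization $(\beta_i,S_2)$ uses the same two basic inequalities (your $\beta_i\geq\beta_i^\circ$ and $S_2\geq S_1^2$ are exactly the paper's bounds on $\beta_i$ and $B_i'(\beta_i)$), but replaces the paper's separate bound on $A(\beta_i)$ by expressing $A(\beta_i)$ as an affine function of $S_2$ and then performing derivative checks on the resulting one-variable functions $g(\beta_i)$. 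This is a valid alternative route; the paper's version is slightly shorter because the third Jensen step (bounding $\sum_\ell p_\ell\sigma_\ell^4/(\beta_i-\sigma_\ell^2)^2$ from below by the square of $\sum_\ell p_\ell\sigma_\ell^2/(\beta_i-\sigma_\ell^2)$, then using $\beta_i\geq\beta_i^\circ$) eliminates the need for any calculus, while your parametrization makes the structure of the problem more transparent at the cost of those two derivative verifications.
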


Concretely, suppose we had $c = 10$ samples per dimension and a subspace amplitude of $\tht_i = 1$.
Then the asymptotic subspace recoveries~\eqref{eq:rs} given in Theorem~\ref{thm:rs} evaluate to $0.818$ in Setting~1, $0.817$ in Setting~2 and $0$ in Setting~3; asymptotic recovery is best in Setting~1 as predicted by Theorem~\ref{thm:bound}.
Recovery is entirely lost in Setting~3, consistent with the observation that PCA is not robust to gross errors.
In Setting~2, only using the $1\%$ of samples with noise variance $0.01$ (resulting in $0.1$ samples per dimension) yields an asymptotic subspace recovery of $0.908$ and so we may hope that recovery with all data could be better.
Theorem~\ref{thm:bound} rigorously shows that PCA does not fully exploit these high quality samples and instead performs worse in Setting~2 than in Setting~1, if only slightly.

Section~\ref{sct:bound} presents the proof of Theorem~\ref{thm:bound}.
 It is notable that Theorem~\ref{thm:bound} holds for all  proportions~$p$, sample-to-dimension ratios~$c$ and subspace amplitudes $\tht_i$; there are no settings where PCA benefits from heteroscedastic noise over homoscedastic noise with the same average variance.
The following corollary is equivalent and provides an alternate way of viewing the result.

\begin{corollary}[Bounds on asymptotic recovery] \label{cor:bound}
If $A(\beta_i) \geq 0$ then the asymptotic PCA amplitude~\eqref{eq:rstheta} is bounded as
\begin{equation} \label{eq:thetabound}
\thh_i^2 \asto
\tht_i^2+\tht_i^2\left(\frac{\beta_i}{c\tht_i^2}-1\right)\left(\frac{\beta_i}{\tht_i^2}+1\right)
\geq \tht_i^2\left(1+\frac{\sigmab^2}{c\tht_i^2}\right)\left(1+\frac{\sigmab^2}{\tht_i^2}\right),
\end{equation}
the asymptotic subspace recovery~\eqref{eq:rs} is bounded as
\begin{equation} \label{eq:rsbound}
\left|\langle \uh_i,\Span\{\ut_j:\tht_j  =  \tht_i\} \rangle\right|^2
\asto \frac{A\left( \beta_{i}\right) }{\beta_{i}B_{i}^{\prime }\left( \beta
_{i}\right) }
\leq \frac{c-\sigmab^4/\tht_i^4}{c+\sigmab^2/\tht_i^2},
\end{equation}
and the asymptotic coefficient recovery~\eqref{eq:rsright} is bounded as
\begin{equation} \label{eq:rszbound}
\left|\left\langle \frac{\zh^{(i)}}{\sqrt{n}},\Span\{\zt^{(j)}:\tht_j  =  \tht_i\} \right\rangle\right|^2
\asto \frac{A(\beta_i)}{c \{ \beta_i+(1-c)\tht_i^2\} B_i'(\beta_i)} \leq \frac{c-\sigmab^4/\tht_i^4}{c(1+\sigmab^2/\tht_i^2)},
\end{equation}
where $\sigmab^2=p_1\sigma_1^2+\cdots+p_L\sigma_L^2$ is the average noise variance and the bounds are met with equality if and only if $\sigma_1^2 = \cdots = \sigma_L^2$.
\end{corollary}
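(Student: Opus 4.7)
My plan is to deduce Corollary~\ref{cor:bound} as a direct translation of Theorem~\ref{thm:bound} into closed-form bounds by evaluating the homoscedastic case explicitly. No new random-matrix analysis is required: the substantive content already lies in Theorem~\ref{thm:bound}, and the corollary merely repackages that statement so that the dependence on the average noise variance $\sigmab^2$ is visible in closed form.

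First, I would observe that the equalities on the left-hand sides of~\eqref{eq:thetabound},~\eqref{eq:rsbound} and~\eqref{eq:rszbound} are nothing more than the asymptotic limits of Theorem~\ref{thm:rs}, with the amplitude written in the alternate form~\eqref{eq:rstheta_alt} from Remark~\ref{cor:rstheta}. So the only real content to be established consists of the three inequalities on the right-hand sides, together with the characterization of when equality is attained.

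Second, I would specialize the homoscedastic expressions from Section~\ref{sct:special} to the common variance $\sigma^2 = \sigmab^2$, for which $\beta_i = \sigmab^2 + c\tht_i^2$ and the homoscedastic threshold $c\tht_i^4 > \sigmab^4$ coincides with $A(\beta_i) > 0$. Substituting into~\eqref{eq:rsthetah} yields precisely the right-hand side of~\eqref{eq:thetabound}, and substituting into~\eqref{eq:rsh} together with its coefficient analogue produces exactly the right-hand sides of~\eqref{eq:rsbound} and~\eqref{eq:rszbound}. These verifications are short rational-function manipulations.

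Finally, I would invoke Theorem~\ref{thm:bound}: among all noise-variance distributions with average $\sigmab^2$ and $A(\beta_i) > 0$, the homoscedastic choice minimizes the asymptotic amplitude and maximizes both the subspace and coefficient recoveries, with equality if and only if $\sigma_1^2 = \cdots = \sigma_L^2$. Combining this optimality statement with the explicit homoscedastic values computed in the previous step immediately gives~\eqref{eq:thetabound},~\eqref{eq:rsbound} and~\eqref{eq:rszbound} together with their equality conditions. There is no real obstacle beyond what is already handled in Theorem~\ref{thm:bound}; the only mild subtlety is the boundary case $A(\beta_i) = 0$ (where the corollary permits $\geq 0$ but Theorem~\ref{thm:bound} is stated with strict inequality), which can be absorbed by a continuity argument in the noise-variance parameters.
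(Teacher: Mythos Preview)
Your proposal is correct and follows essentially the same approach as the paper: the paper's proof simply states that the bounds follow immediately from Theorem~\ref{thm:bound} combined with the homoscedastic expressions~\eqref{eq:rsthetah} and~\eqref{eq:rsh} from Section~\ref{sct:special}. Your write-up is slightly more detailed (noting the role of Remark~\ref{cor:rstheta} for the amplitude form and flagging the boundary case $A(\beta_i)=0$), but the argument is the same.
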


\begin{proof}[Proof of Corollary~\ref{cor:bound}]
The bounds~\eqref{eq:thetabound},~\eqref{eq:rsbound} and~\eqref{eq:rszbound} follow immediately from Theorem~\ref{thm:bound} and the expressions for homoscedastic noise~\eqref{eq:rsthetah} and~\eqref{eq:rsh} in Section~\ref{sct:special}.
\end{proof}

\noindent Corollary~\ref{cor:bound} highlights that while average noise variance may be a practically convenient measure for the overall quality of data, it can lead to an overly optimistic estimate of the performance of PCA for heteroscedastic data. The expressions~\eqref{eq:rstheta},~\eqref{eq:rs} and~\eqref{eq:rsright} in Theorem~\ref{thm:rs} are more accurate.

\newcommand{\sigmat}{\mathcal{I}}
\begin{remark}[Average inverse noise variance] \label{rem:bound}
\em
Average inverse noise variance $\sigmat=p_1 \times1/\sigma_1^2+\cdots+p_L \times 1/\sigma_L^2$ is another natural measure for the overall quality of data.
In particular, it is the (scaled) Fisher information for heteroscedastic Gaussian measurements of a fixed scalar.
Theorem~\ref{thm:bound} implies that homoscedastic noise also produces the best asymptotic PCA performance for a given average inverse noise variance; note that homoscedastic noise minimizes the average noise variance in this case.
Thus, average inverse noise variance can also lead to an overly optimistic estimate of the performance of PCA for heteroscedastic data.
\end{remark}

\section{Impact of parameters} \label{sct:analysis}

The simplified expressions in Theorem~\ref{thm:rs} for the asymptotic performance of PCA provide insight into the impact of the model parameters: sample-to-dimension ratio~$c$, subspace amplitudes~$\tht_1,\dots,\tht_k$, proportions~$p_1,\dots,p_L$ and noise variances~$\sigma_1^2,\dots,\sigma_L^2$.
For brevity, we focus on the asymptotic subspace recovery~\eqref{eq:rs} of the $i$th component; similar phenomena occur for the asymptotic PCA amplitudes~\eqref{eq:rstheta} and coefficient recovery~\eqref{eq:rsright} as we show in Section~\ref{sct:analysis_right} of the Online Supplement.

\subsection{Impact of sample-to-dimension ratio~\texorpdfstring{$c$}{c} and subspace amplitude~\texorpdfstring{$\tht_i$}{theta\_i}} \label{sct:analysis_ctheta}

\begin{figure}[t]
\centering
\begin{subfigure}[t]{0.49\linewidth}
\centering
\includegraphics[width=\linewidth]{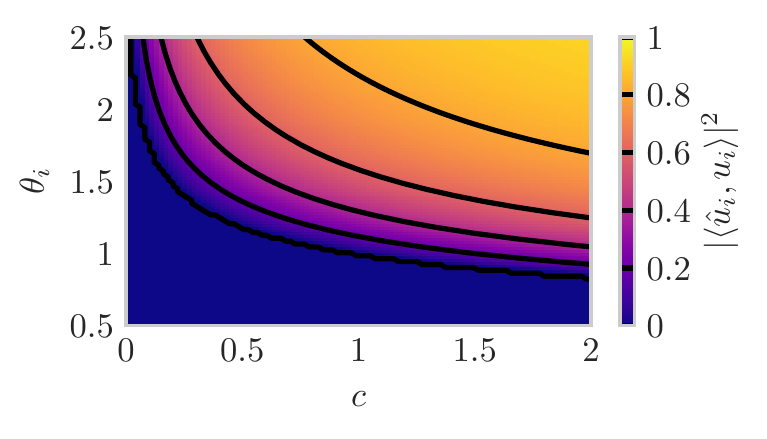}
\caption{Homoscedastic noise with $\sigma_1^2=1$.}
\label{fig:qual2a}
\end{subfigure}
\
\begin{subfigure}[t]{0.49\linewidth}
\centering
\includegraphics[width=\linewidth]{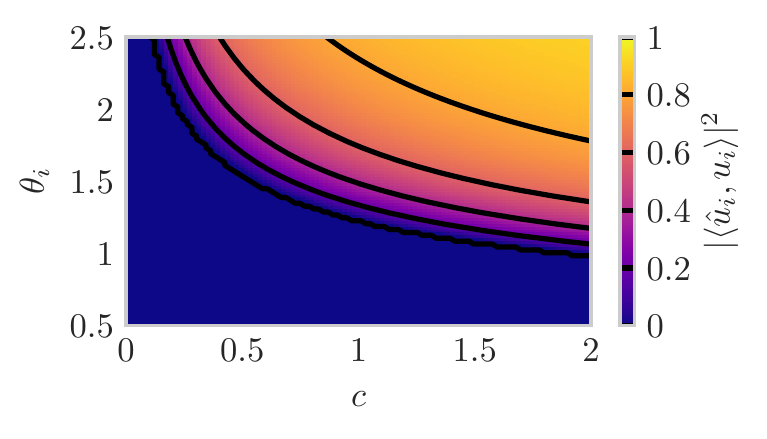}
\caption{Heteroscedastic noise with $p_1 = 80\%$ of samples at $\sigma_1^2=0.8$  and $p_2 = 20\%$ of samples at $\sigma_2^2=1.8$.}
\label{fig:qual2b}
\end{subfigure}
\caption{Asymptotic subspace recovery~\eqref{eq:rs} of the $i$th component as a function of sample-to-dimension ratio~$c$ and subspace amplitude~$\tht_i$ with average noise variance equal to one.
Contours are overlaid in black and the region where $A(\beta_i) \leq0$ is shown as zero (the prediction of Conjecture~\ref{conj:rs}). The phase transition in~(b) is further right than in (a);
more samples are needed to recover the same strength signal.}
\label{fig:qual2}
\end{figure}

Suppose first that there is only one noise variance fixed at $\sigma_1^2=1$ while we vary the sample-to-dimension ratio $c$ and subspace amplitude $\tht_i$.
This is the homoscedastic setting described in Section~\ref{sct:special}.
Figure~\ref{fig:qual2a} illustrates the expected behavior: decreasing the subspace amplitude~$\tht_i$ degrades asymptotic subspace recovery~\eqref{eq:rs} but the lost performance could be regained by increasing the number of samples. Figure~\ref{fig:qual2a} also illustrates a phase transition: a sufficient number of samples with a sufficiently large subspace amplitude is necessary to have an asymptotic recovery greater than zero.
Note that in all such figures, we label the axis $\left|\langle \uh_i,\ut_i\rangle\right|^2$ to indicate the asymptotic recovery on the right hand side of~\eqref{eq:rs}.

Now suppose that there are two noise variances~$\sigma_1^2=0.8$ and~$\sigma_2^2=1.8$ occurring in proportions~$p_1 = 80\%$ and $p_2 = 20\%$.
The average noise variance is still $1$, and Figure~\ref{fig:qual2b} illustrates similar overall features to the homoscedastic case. Decreasing subspace amplitude~$\tht_i$ once again degrades asymptotic subspace recovery~\eqref{eq:rs} and the lost performance could be regained by increasing the number of samples. However, the phase transition is further up and to the right compared to the homoscedastic case.
This is consistent with Theorem~\ref{thm:bound}; PCA performs worse on heteroscedastic data than it does on homoscedastic data of the same average noise variance, and thus more samples or a larger subspace amplitude are needed to recover the subspace basis element.

\subsection{Impact of proportions~\texorpdfstring{$p_1,\dots,p_L$}{p1,...,pL}} \label{sct:qual3}
\begin{figure}[t]
\centering
\includegraphics[width=0.45\linewidth]{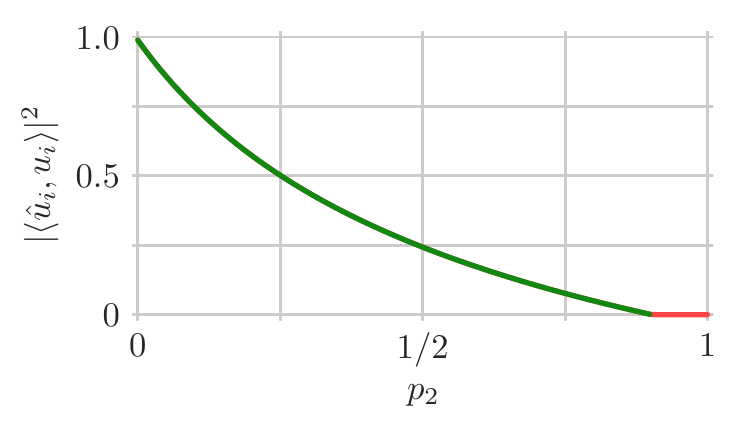}
\caption{
Asymptotic subspace recovery~\eqref{eq:rs} of the $i$th component as a function of the contamination fraction $p_2$, the proportion of samples with noise variance $\sigma_2^2 = 3.25$, where the other noise variance~$\sigma_1^2=0.1$ occurs in proportion $p_1 = 1-p_2$.
The sample-to-dimension ratio is $c=10$ and the subspace amplitude is $\tht_i=1$. The region where $A(\beta_i) \leq0$ is the red horizontal segment with  value zero (the prediction of Conjecture~\ref{conj:rs}).}
\label{fig:qual3}
\end{figure}

Suppose that there are two noise variances~$\sigma_1^2=0.1$ and~$\sigma_2^2=3.25$ occurring in proportions~$p_1=1-p_2$ and $p_2$, where
the sample-to-dimension ratio is $c=10$ and the subspace amplitude is~$\tht_i=1$.
Figure~\ref{fig:qual3} shows the asymptotic subspace recovery~\eqref{eq:rs} as a function of the proportion~$p_2$.
Since $\sigma_2^2$ is significantly larger, it is natural to think of $p_2$ as a fraction of contaminated samples.
As expected, performance generally degrades as $p_2$ increases and low noise samples with noise variance $\sigma_1^2$ are traded for high noise samples with noise variance $\sigma_2^2$.
The performance is best when $p_2 = 0$ and all the samples have the smaller noise variance~$\sigma_1^2$ (i.e., there is no contamination).

It is interesting that the asymptotic subspace recovery in Figure~\ref{fig:qual3} has a steeper slope initially for $p_2$ close to zero and then a shallower slope for $p_2$ close to one.
Thus the benefit of reducing the contamination fraction varies across the range.

\subsection{Impact of noise variances~\texorpdfstring{$\sigma_1^2,\dots,\sigma_L^2$}{sigma1\^{}2,...,sigmaL\^{}2}} \label{sct:impact_sigma}

\begin{figure}[t]
\centering
\includegraphics[width=0.45\linewidth]{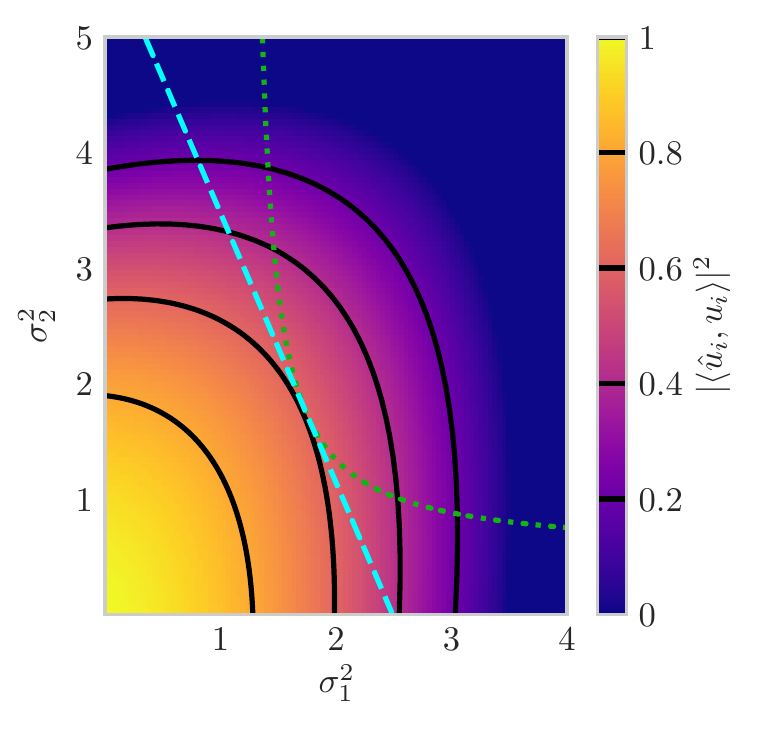}
\caption{Asymptotic subspace recovery~\eqref{eq:rs}  of the $i$th component as a function of noise variances~$\sigma_1^2$ and~$\sigma_2^2$ occurring in proportions~$p_1 = 70\%$ and $p_2 = 30\%$, where
the sample-to-dimension ratio is $c=10$ and the subspace amplitude is $\tht_i=1$. Contours are overlaid in black and the region where $A(\beta_i) \leq0$ is shown as zero (the prediction of Conjecture~\ref{conj:rs}).
Along the dashed cyan line, the average noise variance is $\sigmab^2\approx1.74$ and the best performance occurs when $\sigma_1^2=\sigma_2^2=\sigmab^2$.
Along the dotted green curve, the average inverse noise variance is $\sigmat\approx0.575$ and the best performance again occurs when $\sigma_1^2=\sigma_2^2$.}
\label{fig:qual1}
\end{figure}

Suppose that there are two noise variances~$\sigma_1^2$ and~$\sigma_2^2$ occurring in proportions~$p_1 = 70\%$ and $p_2 = 30\%$, where
the sample-to-dimension ratio is $c=10$ and the subspace amplitude is $\tht_i=1$.
Figure~\ref{fig:qual1} shows the asymptotic subspace recovery~\eqref{eq:rs} as a function of the noise variances~$\sigma_1^2$ and~$\sigma_2^2$.
As expected, performance typically degrades with increasing noise variances. However, there is a curious regime around $\sigma_1^2=0$ and $\sigma_2^2=4$ where increasing $\sigma_1^2$ slightly from zero improves asymptotic performance; the contour lines point slightly up and to the right.
We have also observed this phenomenon in finite-dimensional simulations, so this effect is not simply an asymptotic artifact.
This surprising phenomenon is an interesting avenue for future exploration.

The contours in Figure~\ref{fig:qual1} are generally horizontal for small $\sigma_1^2$ and vertical for small $\sigma_2^2$. This indicates that when the gap between the two largest noise variances is ``sufficiently'' wide, the asymptotic subspace recovery~\eqref{eq:rs} is roughly determined by the largest noise variance.
While initially unexpected, this property can be intuitively understood by recalling that~$\beta_i$ is the largest value of $x$ satisfying
\begin{equation}\label{eq:qualB}
\frac{1}{c\tht_i ^{2}}=\sum_{\ell =1}^{L}\frac{p_{\ell }}{x-\sigma_{\ell
}^{2}}
.
\end{equation}
When the gap between the two largest noise variances is wide, the largest noise variance is significantly larger than the rest and it dominates the sum in~\eqref{eq:qualB} for $x>\max_\ell (\sigma_\ell^2)$, i.e., where $\beta_i$ occurs.
Thus $\beta_i$, and similarly, $A(\beta_i)$ and $B_i'(\beta_i)$ are roughly determined by the largest noise variance.

The precise relative impact of each noise variance~$\sigma_\ell^2$ depends on its corresponding proportion~$p_\ell$, as shown by the asymmetry of Figure~\ref{fig:qual1} around the line $\sigma_1^2=\sigma_2^2$.
Nevertheless, very large noise variances can drown out the impact of small noise variances, regardless of their relative proportions.
This behavior provides a rough explanation for the sensitivity of PCA to even a few gross errors (i.e., outliers); even in  small proportions, sufficiently large errors dominate the performance of PCA.

Along the dashed cyan line in Figure~\ref{fig:qual1}, the average noise variance is $\sigmab^2\approx1.74$ and the best performance occurs when $\sigma_1^2=\sigma_2^2=\sigmab^2$,
as predicted by Theorem~\ref{thm:bound}.
Along the dotted green curve, the average inverse noise variance is $\sigmat\approx0.575$ and the best performance again occurs when $\sigma_1^2=\sigma_2^2$, as predicted in Remark~\ref{rem:bound}.
Note, in particular, that the dashed line and dotted curve are both tangent to the contour at exactly $\sigma_1^2=\sigma_2^2$.
The observation that larger noise variances have ``more impact'' provides a rough explanation for this phenomenon; homoscedasticity minimizes the largest noise variance for both the line and the curve.
In some sense, as discussed in Section~\ref{sct:optim}, the degradation from samples with larger noise is greater than the benefit of having samples
with correspondingly smaller noise.

\subsection{Impact of adding data} \label{sct:impact_add}
\begin{figure}[b]
\centering
\includegraphics[width=0.75\linewidth]{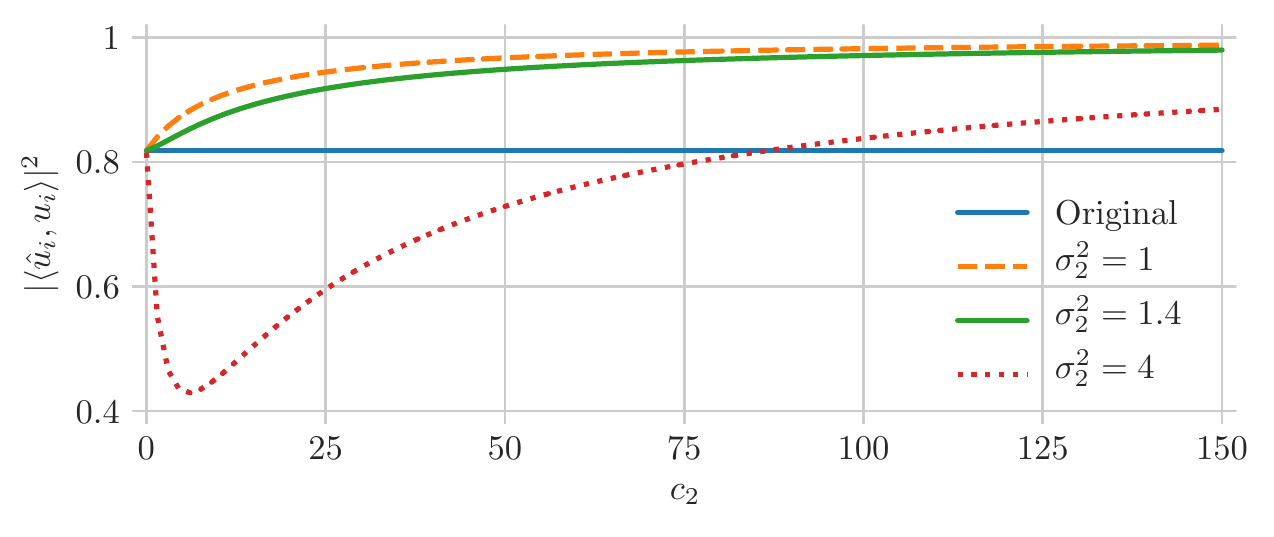}
\caption{Asymptotic subspace recovery~\eqref{eq:rs}  of the $i$th component for samples added with noise variance $\sigma_2^2$ and samples-per-dimension $c_2$ to an existing dataset with noise variance~$\sigma_1^2 = 1$, sample-to-dimension ratio $c_1=10$ and subspace amplitude $\tht_i=1$. }
\label{fig:impact_add}
\end{figure}

Consider adding data with noise variance~$\sigma_2^2$ and sample-to-dimension ratio~$c_2$ to an existing dataset that has noise variance~$\sigma_1^2 = 1$, sample-to-dimension ratio $c_1=10$ and subspace amplitude $\tht_i=1$  for the $i$th component.
The combined dataset has a sample-to-dimension ratio of $c=c_1+c_2$ and is potentially heteroscedastic with noise variances $\sigma_1^2$ and $\sigma_2^2$ appearing in proportions $p_1 = c_1/c$ and $p_2=c_2/c$.
Figure~\ref{fig:impact_add} shows the asymptotic subspace recovery~\eqref{eq:rs}  of the $i$th component for this combined dataset as a function of the sample-to-dimension ratio $c_2$ of the added data for a variety of noise variances $\sigma_2^2$.
The dashed orange curve, showing the recovery when $\sigma_2^2 = 1 = \sigma_1^2$, illustrates the benefit we would expect for homoscedastic data: increasing the samples per dimension improves recovery.
The dotted red curve shows the recovery when $\sigma_2^2=4 > \sigma_1^2$. For a small number of added samples, the harm of introducing noisier data outweighs the benefit of having more samples.
For sufficiently many samples, however, the tradeoff reverses and recovery for the combined dataset exceeds that for the original dataset; the break even point can be calculated using expression~\eqref{eq:rs}.
Finally, the green curve shows the performance when $\sigma_2^2=1.4 > \sigma_1^2$.
As before, the added samples are noisier than the original samples and so we might expect performance to initially decline again. In this case, however, the performance improves for any number of added samples.
In all three cases, the added samples dominate in the limit $c_2 \to \infty$ and PCA approaches perfect subspace recovery as one may expect.
However, perfect recovery in the limit does not typically happen for PCA amplitudes~\eqref{eq:rstheta} and coefficient recovery~\eqref{eq:rsright}; see Section~\ref{sct:add_tz} of the Online Supplement for more details.

Note that it is equivalent to think about removing noisy samples from a dataset by thinking of the combined dataset as the original full dataset. The green curve in Figure~\ref{fig:impact_add} then suggests that slightly noisier samples should not be removed; it would be best if the full data was homoscedastic but removing slightly noisier data (and reducing the dataset size) does more harm than good.
The dotted red curve in Figure~\ref{fig:impact_add} suggests that much noisier samples should be removed unless they are numerous enough to outweigh the cost of adding them.
Once again, expression~\eqref{eq:rs} can be used to calculate the break even point.

\section{Numerical simulation}

\label{sct:experiment}

\begin{figure}[b!]
\centering
\begin{subfigure}[t]{0.49\linewidth}
\centering
\includegraphics[width=0.95\linewidth]{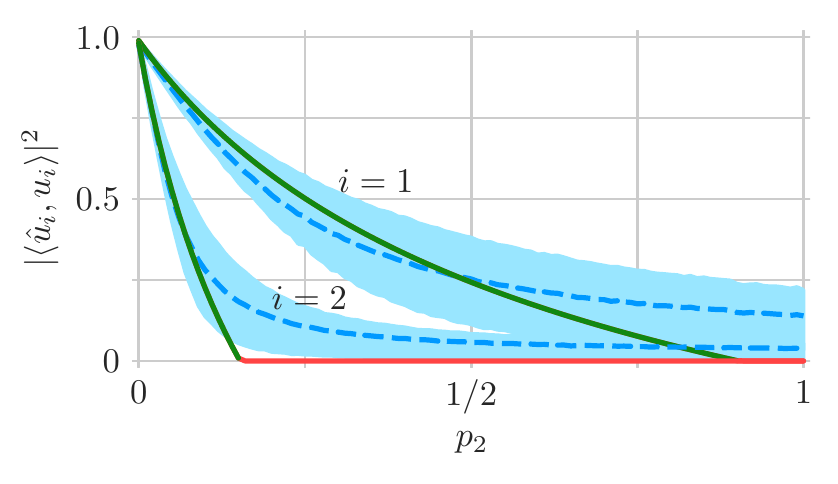}
\caption{$10^3$ samples in $10^2$ dimensions.}
\label{fig:exps51}
\end{subfigure}\
\begin{subfigure}[t]{0.49\linewidth}
\centering
\includegraphics[width=0.95\linewidth]{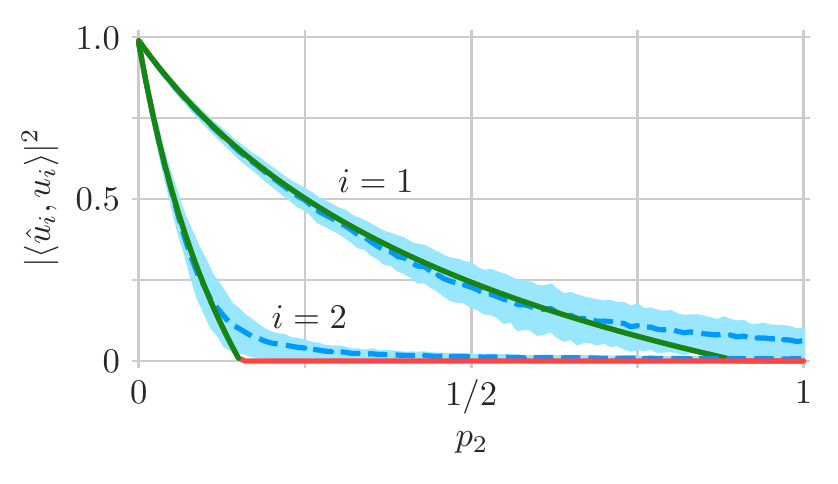}
\caption{$10^4$ samples in $10^3$ dimensions.}
\label{fig:exps52}
\end{subfigure}
\caption{Simulated subspace recovery~\eqref{eq:rs} as a function of the contamination fraction $p_2$, the proportion of samples with noise variance $\sigma_2^2 = 3.25$, where the other noise variance~$\sigma_1^2=0.1$ occurs in proportion $p_1 = 1-p_2$.
The sample-to-dimension ratio is $c=10$ and the subspace amplitudes are $\tht_1=1$ and $\tht_2=0.8$.
Simulation mean (dashed blue curve) and interquartile interval
(light blue ribbon) are shown with the asymptotic recovery~\eqref{eq:rs} of Theorem~\ref{thm:rs} (green curve).
The region where $A(\beta_i) \leq0$ is the red horizontal segment with  value zero (the prediction of Conjecture~\ref{conj:rs}).
Increasing data size from (a) to (b) results in  smaller interquartile intervals, indicating concentration to the mean, which is itself converging to the asymptotic recovery.
}
\label{fig:exps5}
\end{figure}

This section simulates data generated by the model described in Section~\ref{sct:model} to illustrate the main result, Theorem~\ref{thm:rs}, and to demonstrate that the asymptotic results provided are meaningful for practical settings with finitely many samples in a finite-dimensional space.
As in Section~\ref{sct:analysis}, we show results only for the asymptotic subspace recovery~\eqref{eq:rs} for brevity; the same phenomena occur for the asymptotic PCA amplitudes~\eqref{eq:rstheta} and coefficient recovery~\eqref{eq:rsright} as we show in Section~\ref{sct:exp_rest} of the Online Supplement.
Consider data from a two-dimensional subspace with subspace amplitudes $\tht_1 = 1$ and $\tht_2 = 0.8$, two noise variances $\sigma_1^2=0.1$ and~$\sigma_2^2=3.25$, and a sample-to-dimension ratio of $c=10$. We sweep the proportion of high noise samples $p_2$ from zero to one, setting $p_1=1-p_2$ as in Section~\ref{sct:qual3}.
The first simulation considers $n=10^{3}$ samples in a $d=10^{2}$ dimensional ambient space ($10^4$ trials). The second increases these to $n=10^{4}$ samples in a $d=10^{3}$ dimensional ambient space ($10^3$ trials).
Both simulations generate data from the standard normal distribution, i.e., $\zt_{ij},\varepsilon_{ij}\sim\cN(0,1)$.
Note that sweeping over $p_2$ covers homoscedastic settings at the extremes ($p_{2}=0,1$) and evenly split heteroscedastic data in the middle ($p_{2}=1/2$).

Figure~\ref{fig:exps5} plots the recovery of subspace components $|\langle\uh_i,\ut_i\rangle|^2$ for both simulations with the mean (dashed blue curve) and interquartile interval (light blue ribbon) shown with the asymptotic recovery~\eqref{eq:rs} of Theorem~\ref{thm:rs} (green curve).
The region where $A(\beta_i) \leq0$ is the red horizontal segment with  value zero (the prediction of Conjecture~\ref{conj:rs}).
Figure~\ref{fig:exps51} illustrates general agreement between the mean and the asymptotic recovery, especially far away from the non-differentiable points where the recovery becomes zero and Conjecture~\ref{conj:rs} predicts a phase transition.
This is a general phenomenon we observed: near the phase transition the smooth simulation mean deviates from the non-smooth asymptotic recovery.
Intuitively, an asymptotic recovery of zero corresponds to PCA components that are like isotropically random vectors and so have vanishing
square inner product with the true components as the dimension grows.
In finite dimension, however, there is a chance of alignment that results in a positive square inner product.

Figure~\ref{fig:exps52} shows what happens when the number of samples and ambient dimension are increased to $n=10^{4}$ and $d=10^{3}$.
The interquartile intervals are roughly half the size of those in Figure~\ref{fig:exps51}, indicating concentration of the recovery of each component (a random quantity) around its mean. Furthermore, there is better agreement between the mean and the asymptotic recovery, with the maximum deviation between simulation and asymptotic prediction still occurring nearby the phase transition.
In particular for $p_2 < 0.75$ the largest deviation for $|\langle \uh_1,\ut_1\rangle|^2$ is around $0.03$. For $p_2 \notin (0.1,0.35)$, the largest deviation for $|\langle \uh_2,\ut_2\rangle|^2$ is around $0.02$.
To summarize, the numerical simulations indicate that the subspace recovery concentrates to its mean and that the mean approaches the asymptotic recovery.
Furthermore, good agreement with Conjecture~\ref{conj:rs} provides further evidence that there is indeed a phase transition below which the subspace is not recovered.
These findings are similar to those in~\cite{hong2016tat} for a one-dimensional subspace with two noise variances.

\section{Proof of Theorem~\ref{thm:rs}} \label{sct:proof}
The proof has six main parts. Section~\ref{sct:initial} connects several results from random matrix theory to obtain an initial expression for asymptotic recovery. This expression is difficult to evaluate and
analyze because it involves an integral transform of the (nontrivial)
limiting singular value distribution for a random (noise) matrix as well as
the corresponding limiting largest singular value.
However, we have discovered a nontrivial structure in this expression that enables us to derive a much simpler form in Sections \ref{sct:varchange}-\ref{sct:r_alg}.

\subsection{Obtain an initial expression}

\label{sct:initial}

Rewriting the model in~\eqref{eq:model} in matrix form yields
\begin{equation} \label{eq:matmodel}
\Y = (y_1, \ldots, y_n) =\Ut\Tht\Zt^\CT+\E\Eta\in \bC^{d\times n},
\end{equation}
where
\begin{itemize}
\item[]
$\Zt = (\zt^{(1)},\ldots,\zt^{(k)}) \in \bC^{n\times k}$ is the coefficient matrix,
\item[]
$\E = (\varepsilon_1, \ldots,\varepsilon_n) \in \bC^{d\times n}$ is the (unscaled) noise matrix,
\item[]
$\Eta = \diag(\eta_{1},\dots,\eta_{n}) \in \bR_+^{n\times n}$ is a diagonal matrix of noise standard deviations.
\end{itemize}
The first $k$ principal components~$\uh_1, \ldots, \uh_k$, PCA amplitudes~$\thh_1, \ldots, \thh_k$ and (normalized) scores~$\zh^{(1)}/\sqrt{n}, \ldots, \zh^{(k)}/\sqrt{n}$ defined in Section~\ref{sct:intro} are exactly the first $k$ left singular vectors, singular values and right singular vectors, respectively, of the scaled data matrix $\Y / \sqrt{n}$.

To match the model of~\cite{benaych2012tsv}, we introduce the random unitary matrix
\begin{equation*}
\R = [
\begin{array}{cc}
\Ub & \Ub^{\perp }
\end{array}
] [
\begin{array}{cc}
\Ut & \Ut^{\perp }
\end{array}
] ^{\CT }
=
\Ub\Ut^\CT
+ \Ub^{\perp }(\Ut^{\perp })^\CT,
\end{equation*}
where the random matrix $\Ub\in \bC^{d\times k}$ is the Gram-Schmidt orthonormalization of a $d\times k$ random matrix that has iid (mean zero,
variance one) circularly symmetric complex normal $\cCN(0,1)$ entries.
We use the superscript $\perp$ to denote a matrix of orthonormal basis elements for the orthogonal complement; the columns of $\Ut^\perp$ form an orthonormal basis for the orthogonal complement of the column span of $\Ut$.

Left multiplying~\eqref{eq:matmodel} by $\R /\sqrt{n}$ yields that  $\R\uh_1, \ldots, \R\uh_k$, $\thh_1, \ldots, \thh_k$ and $\zh^{(1)}/\sqrt{n}, \ldots, \zh^{(k)}/\sqrt{n}$ are the first $k$ left singular vectors, singular values and right singular vectors, respectively, of the scaled and rotated data matrix
\begin{equation*}
\Yn = \frac{1}{\sqrt{n}}\, \R\Y.
\end{equation*}
The matrix $\Yn$ matches the low rank (i.e., rank $k$)
perturbation of a random matrix model considered in\ \cite{benaych2012tsv}
because
\begin{equation*}
\Yn=\LP+\X,
\end{equation*}
where
\begin{align*}
\LP = \frac{1}{\sqrt{n}}\, \R\left( \Ut\Tht\Zt^\CT\right)
=\frac{1}{\sqrt{n}}\,
\Ub\Tht\Zt^\CT=\sum_{i=1}^{k}\tht_{i}\ub_{i}\left( \frac{1}{\sqrt{n}}
\, \zt^{\left( i\right) }\right)
^{\CT } , \quad
\X = \frac{1}{\sqrt{n}}\, \R\left( \E\Eta\right) =\left(
\frac{1}{\sqrt{n}}\, \R\E\right) \Eta.
\end{align*}
Here $\LP$ is generated according to the \textquotedblleft
orthonormalized model\textquotedblright\ in~\cite{benaych2012tsv} for the vectors $\ub_{i}$ and
the \textquotedblleft iid model\textquotedblright\ for the vectors $\zt^{\left( i\right) }$ and $\LP$ satisfies Assumption 2.4 of~\cite{benaych2012tsv}; the latter considers $\ub_{i}$ and $\zt^{\left( i\right) }$
to be generated according to the same model, but its proof extends to this case. Furthermore $\R\E$ has iid
entries with zero mean, unit variance and bounded fourth moment
(by the assumption that $\varepsilon_i$ are unitarily invariant), and $\Eta$ is a non-random diagonal positive definite matrix with bounded spectral norm and limiting eigenvalue distribution $p_1\delta_{\sigma_1^2}+\cdots+p_L\delta_{\sigma_L^2}$, where $\delta_{\sigma_\ell^2}$ is the Dirac delta distribution centered at $\sigma_\ell^2$.
Under these conditions, Theorem~4.3 and Corollary~6.6 of~\cite{bai2010sao} state that $\X$ has a non-random compactly supported limiting singular value distribution $\mu_\X$ and the largest singular value of $\X$ converges almost surely to the supremum of the support of $\mu_\X$.
Thus Assumptions~2.1 and 2.3 of~\cite{benaych2012tsv} are also satisfied.

Furthermore,
$
\uh_i^{\CT}\ut_j
=\uh_i^{\CT}\R^\CT\R\ut_j
=(\R\uh_i)^\CT\ub_j
$
for all $i,j \in \{1,\dots,k\}$ so
\begin{align*}
|\langle \R\uh_i,\Span\{\ub_j:\tht_j   =  \tht_i\} \rangle|^2
&= |\langle \uh_i,\Span\{\ut_j:\tht_j   =  \tht_i\} \rangle|^2, \\
|\langle \R\uh_i,\Span\{\ub_j:\tht_j \neq \tht_i\} \rangle|^2
&= |\langle \uh_i,\Span\{\ut_j:\tht_j \neq \tht_i\} \rangle|^2,
\end{align*}
and hence Theorem 2.10 from \cite{benaych2012tsv} implies that, for each $i \in \{1,\ldots,k\}$,
\begin{equation} \label{eq:initialtheta}
\thh_i^2 \asto
\begin{cases}
\rho_i^2 & \text{if } \tht_i^2 > \thb^2,\\
b^2 & \text{otherwise,}
\end{cases}
\end{equation}
and that if $\tht_i^2 >\thb^2$, then
\begin{align}
|\langle \uh_i,\Span\{\ut_j:\tht_j   =  \tht_i\} \rangle|^2
&\asto \frac{-2 \varphi(\rho_i)}{\tht_i^2 D'(\rho_i) } \label{eq:init} , \\
\left|\left\langle \frac{\zh^{(i)}}{\sqrt{n}},\Span\{\zt^{(j)}:\tht_j   =  \tht_i\} \right\rangle\right|^2
&\asto \frac{-2 \{c^{-1}\varphi(\rho_i)+(1-c^{-1})/\rho_i\}}{\tht_i^2 D'(\rho_i) } , \nonumber
\end{align}
and
\begin{align}
|\langle \uh_i,\Span\{\ut_j:\tht_j \neq \tht_i\} \rangle|^2
&\asto 0, \label{eq:initzero} \\
\left|\left\langle \frac{\zh^{(i)}}{\sqrt{n}},\Span\{\zt^{(j)}:\tht_j \neq \tht_i\} \right\rangle\right|^2 &\asto 0 , \nonumber
\end{align}
where
$\rho_{i} = D^{-1}( 1/\tht_{i}^{2}) $, $\thb^{2} = 1/D\left( b^{+}\right)
$,
$D (z)  = \varphi (z) \{ c^{-1}\varphi
\left( z\right) +
(1-c^{-1})/z\}$
for $z>b$,
$
\varphi \left( z\right) = \int z/(z^{2}-t^{2})\ d\mu_{\X}\left(
t\right),
$
$b$ is the supremum of the support of $\mu_{\X}$ and $\mu_{\X}$ is the limiting singular value distribution of $\X$ (compactly supported by Assumption 2.1 of~\cite{benaych2012tsv}).
We use the notation $f\left( b^{+}\right) = \lim_{z\rightarrow
b^{+}}f\left( z\right) $ as a convenient shorthand for the limit from above of a
function $f\left( z\right) $.

Theorem~2.10 from~\cite{benaych2012tsv} is presented therein for $d \leq n$ (i.e., $c \geq 1$) to simplify their proofs. However, it also holds without modification for $d > n$ if the limiting singular value distribution~$\mu_\X$ is always taken to be the limit of the empirical distribution of the  $d$ largest singular values ($d-n$ of which will be zero). Thus we proceed without the condition that $c > 1$.

Furthermore, even though it is not explicitly stated as a main result in~\cite{benaych2012tsv}, the proof of Theorem~2.10 in~\cite{benaych2012tsv} implies that
\begin{equation} \label{eq:initmix}
\sum_{j:\tht_j=\tht_i}
\langle \uh_i,\ut_j \rangle
\left\langle \frac{\zh^{(i)}}{\sqrt{n}},\frac{\zt^{(j)}}{\|\zt^{(j)}\|} \right\rangle^*
\asto
\sqrt{
\frac{-2 \varphi(\rho_i)}{\tht_i^2 D'(\rho_i) }
\times
\frac{-2 \{c^{-1}\varphi(\rho_i)+(1-c^{-1})/\rho_i\}}{\tht_i^2 D'(\rho_i)},
}
\end{equation}
as was also noted in~\cite{nadakuditi2014oaa} for the special case of distinct subspace amplitudes.

Evaluating the expressions~\eqref{eq:initialtheta},~\eqref{eq:init} and~\eqref{eq:initmix} would consist of evaluating the intermediates listed above from last to first. These steps are challenging because they involve an integral transform of the limiting singular value
distribution $\mu_\X$ for the random (noise) matrix $\X$ as well as the corresponding
limiting largest singular value $b$, both of which depend nontrivially on the model parameters.
Our analysis uncovers a nontrivial structure that we exploit to derive simpler expressions.

Before proceeding, observe that the almost sure limit in~\eqref{eq:initmix} is just the geometric mean of the two almost sure limits in~\eqref{eq:init}.
Hence, we proceed to derive simplified expressions for~\eqref{eq:initialtheta} and~\eqref{eq:init}; \eqref{eq:rsmix} follows as the geometric mean of the simplified expressions obtained for the almost sure limits in~\eqref{eq:init}.

\subsection{Perform a change of variables}

\label{sct:varchange}

We introduce the function defined, for $z>b$, by
\begin{equation}  \label{eq:psi}
\psi \left( z\right) = \frac{cz}{\varphi \left( z\right) }=\left\{ \frac{1}{c}\int\frac{1}{z^{2}-t^{2}}d\mu_{\X}\left( t\right)
\right\}
^{-1},
\end{equation}
because it turns out to have several nice properties that simplify all of
the following analysis.
Rewriting~\eqref{eq:init} using $\psi \left( z\right) $ instead of $\varphi
\left( z\right) $ and factoring appropriately yields that if $\tht_i^2  >\thb^2$ then
\begin{align}
|\langle \uh_i,\Span\{\ut_j:\tht_j   =  \tht_i\} \rangle|^2
&\asto \frac{1}{\psi(\rho_i)}\frac{-2c}{\tht_i^2 D'(\rho_i)/\rho_i} \label{eq:rspsi} , \\
\left|\left\langle \frac{\zh^{(i)}}{\sqrt{n}},\Span\{\zt^{(j)}:\tht_j   =  \tht_i\} \right\rangle\right|^2
&\asto \frac{1}{c \{ \psi(\rho_i)+(1-c)\tht_i^2\}}\frac{-2c}{\tht_i^2 D'(\rho_i)/\rho_i } , \nonumber
\end{align}
where now
\begin{equation} \label{eq:Dpsi}
D\left( z\right) =\frac{cz^{2}}{\psi^2 \left( z\right)}+\frac{c-1}{\psi \left( z\right) }
\end{equation}
for $z > b$ and we have used the fact that
\begin{equation*}
\frac{1}{c}\left\{ \frac{1}{\psi(\rho_i)}+\frac{1-c^{-1}}{\rho_i^2}\right\}
= \frac{1}{c}\left\{ \psi(\rho_i) + \frac{1-c}{D(\rho_i)}\right\}^{-1}
= \frac{1}{c \{ \psi(\rho_i) + (1-c)\tht_i^2\}}.
\end{equation*}

\subsection{Find useful properties of~\texorpdfstring{$\psi(z)$}{psi(z)}}

\label{sct:props}

Establishing some properties of $\psi \left( z\right) $ aids
simplification significantly.

\medskip

\noindent \textbf{Property 1.}
We show that $\psi \left( z\right) $
satisfies a certain rational equation for all $z>b$ and derive its inverse function $\psi^{-1}(x)$.
Observe that the square singular values of the noise matrix $\X$ are exactly the eigenvalues of $c\X\X^{\CT }$, divided by $c$. Thus we first consider the
limiting eigenvalue distribution $\mu_{c\X\X^{\CT }}$ of $c\X\X^{\CT }$ and then relate its Stieltjes transform $m\left(
\zeta \right) $ to $\psi \left( z\right) $.

Theorem~4.3 in~\cite{bai2010sao} establishes that the random matrix
$
c\X\X^{\CT }=(1/d)\E
\Eta ^{2}\E^{\CT }
$
has a limiting eigenvalue distribution $\mu_{c\X\X^{\CT }}$
whose Stieltjes transform is given, for $\zeta \in \bC^{+}$, by
\begin{equation}  \label{eq:stieltjes}
m(\zeta) = \int \frac{1}{t-\zeta }d\mu_{c\X\X^{\CT }}\left( t\right) ,
\end{equation}
and satisfies the condition
\begin{equation}  \label{eq:pan}
\forall_{\zeta \in \bC^{+}}\quad m\left( \zeta \right)
=-\left\{ \zeta
-c\sum_{\ell =1}^{L}\frac{p_{\ell }\sigma_{\ell }^{2}}{1+\sigma_{\ell
}^{2}m\left( \zeta \right) }\right\} ^{-1},
\end{equation}
where $\bC^{+}$ is the set of all complex numbers with positive
imaginary part.

Since the $d$ square singular values of $\X$ are exactly the $d$ eigenvalues
of $c\X\X^{\CT }$ divided by $c$, we have for all $z > b$
\begin{equation}
\psi \left( z\right)
=\left\{ \frac{1}{c}\int\frac{1}{z^{2}-t^{2}}d\mu_{\X}\left( t\right) \right\} ^{-1}
=-\left\{ \int\frac{1}{t-z^{2}c}d\mu_{c\X\X^{\CT }}\left( t\right) \right\} ^{-1}
.  \label{eq:X_cXXt}
\end{equation}
For all $z$ and $\xi >0$, $z^{2}c+i\xi \in \bC^{+}$ and so combining~\eqref{eq:stieltjes}--\eqref{eq:X_cXXt} yields that for all $z>b$
\begin{equation*}
\psi \left( z\right)
=-\left\{ \lim_{\xi \rightarrow 0^{+}}m (z^{2}c+i\xi  ) \right\} ^{-1} 
=z^{2}c-c\sum_{\ell =1}^{L}\frac{p_{\ell }\sigma_{\ell }^{2}}{1-\sigma
_{\ell }^{2}/\psi \left( z\right) }.
\end{equation*}
Rearranging yields
\begin{equation}  \label{eq:Q1}
0=\frac{cz^{2}}{ \psi^2 \left( z\right) }-\frac{1}{\psi \left( z\right) }-\frac{c}{\psi \left( z\right) }\sum_{\ell
=1}^{L}\frac{p_{\ell }\sigma_{\ell }^{2}}{\psi \left( z\right) -\sigma
_{\ell }^{2}},
\end{equation}
for all $z > b$, where the last term is
\begin{equation*}
-\frac{c}{\psi \left( z\right) }\sum_{\ell =1}^{L}\frac{p_{\ell }\sigma
_{\ell }^{2}}{\psi \left( z\right) -\sigma_{\ell }^{2}}
=\frac{c}{\psi \left( z\right) }-c\sum_{\ell =1}^{L}\frac{p_{\ell }}{\psi
\left( z\right) -\sigma_{\ell }^{2}},
\end{equation*}
because $p_{1}+\cdots +p_{L}=1$. Substituting back into~\eqref{eq:Q1}
yields $0=Q\{ \psi \left( z\right) ,z\} $ for all $z>b$,
where
\begin{equation}  \label{eq:Q}
Q\left( s,z\right) = \frac{cz^{2}}{s^{2}}+\frac{c-1}{s}-c\sum_{\ell =1}^{L}\frac{p_{\ell }}{s-\sigma_{\ell }^{2}} .
\end{equation}
Thus $\psi(z) $ is an algebraic function (the associated polynomial can be formed by clearing the denominator of $Q$).
Solving~\eqref{eq:Q} for $z>b$ yields the inverse
\begin{equation} \label{eq:psiinv}
\psi^{-1}(x) = \sqrt{\frac{1-c}{c}x+x^2\sum_{\ell=1}^L \frac{p_\ell}{x-\sigma_\ell^2}}
= \sqrt{\frac{x}{c}\left(1+c\sum_{\ell=1}^L \frac{p_\ell\sigma_\ell^2}{x-\sigma_\ell^2}\right)}.
\end{equation}

\medskip

\noindent \textbf{Property 2.}
We show that $\max_\ell ( \sigma_\ell^2) <\psi(z)<cz^2$ for $z > b$.
For $z>b$, one can show from~\eqref{eq:psi} that $\psi(y)$ increases continuously and monotonically from $\psi(z)$ to infinity as $y$ increases from $z$ to infinity, and hence $\psi^{-1}(x)$ must increase continuously and monotonically from $z$ to infinity as $x$ increases from $\psi(z)$ to infinity.
However, $\psi^{-1}(x)$ is discontinuous at $x=\max_\ell ( \sigma_\ell^2) $ because $\psi^{-1}(x) \to \infty$ as $x \to \max_\ell (\sigma_\ell^2)$ from the right, and so it follows that $\psi(z) > \max_\ell (\sigma_\ell^2)$.
Thus $1/ \{ \psi(z)-\sigma_{\ell }^{2}\} > 0$ for all $\ell \in \{1,\dots,L\}$ and so
\begin{equation*}
cz^2=c [\psi^{-1}\{\psi(z)\}]^2
=\psi(z)\left\{1+c\sum_{\ell=1}^L \frac{p_\ell\sigma_\ell^2}{\psi(z)-\sigma_\ell^2}\right\}
>\psi(z).
\end{equation*}

\noindent \textbf{Property 3.}
We show that $0 < \psi \left( b^{+}\right) < \infty$ and $\psi ^{\prime }\left( b^{+}\right) =\infty $.
Property~2 in the limit $z=b^+$ implies that
\begin{equation*}
0 < \max_\ell (\sigma_\ell^2) \leq \psi(b^+) \leq cb^2 < \infty.
\end{equation*}
Taking the total derivative of $0=Q\{\psi(z),z\}$ with respect to $z$ and solving for $\psi'(z)$ yields
\begin{equation} \label{eq:psidiff}
\psi ^{\prime }\left( z\right) =- \frac{\partial Q}{\partial z}\{
\psi \left( z\right) ,z\} {\Big /} \frac{\partial Q}{\partial s}\{\psi
\left( z\right) ,z\} .
\end{equation}
As observed in~\cite{nadakuditi2006tpm}, the non-pole boundary points of compactly supported
distributions like $\mu_{c\X\X^{\CT }}$ occur where the
polynomial defining their Stieltjes transform has multiple roots.
Thus $\psi(b^+)$ is a multiple root of $Q(\cdot,b)$ and so
\begin{align*}
\frac{\partial Q}{\partial s}\{\psi \left( b^{+}\right) ,b\} =0 , \quad
\frac{\partial Q}{\partial z}\{\psi \left( b^{+}\right) ,b\} =\frac{2cb}{ \psi^2 \left( b^{+}\right) }>0.
\end{align*}
Thus $\psi ^{\prime }\left( b^{+}\right) =\infty $, where the sign is
positive because $\psi \left( z\right) $ is an increasing
function on $z>b$.

Summarizing, we have shown that
\begin{enumerate}
\item [a)]
$0=Q\{ \psi \left( z\right)
,z\}$ for all $z>b$ where $Q$ is defined in~\eqref{eq:Q}, and the inverse function $\psi^{-1}(x)$ is given in~\eqref{eq:psiinv},
\item [b)]
$\max_\ell (\sigma_\ell^2) <\psi(z)<cz^2$,
\item [c)]
$0 < \psi \left( b^{+}\right) < \infty$ and $\psi ^{\prime }\left(
b^{+}\right) =\infty $.
\end{enumerate}
\noindent We now use these properties to aid simplification.

\subsection{Express~\texorpdfstring{$D(z)$}{D(z)} and~\texorpdfstring{$D'(z)/z$}{D'(z)/z} in terms of only~\texorpdfstring{$\psi(z)$}{psi(z)}}

\label{sct:DDp_psi}

We can rewrite~\eqref{eq:Dpsi} as
\begin{equation}  \label{eq:D}
D\left( z\right)
=Q\{ \psi (z) ,z\} +c\sum_{\ell =1}^{L}\frac{p_{\ell }}{\psi \left( z\right) -\sigma_{\ell}^{2}}
=c\sum_{\ell =1}^{L}\frac{p_{\ell }}{\psi \left( z\right)
-\sigma_{\ell }^{2}} .
\end{equation}
because $0=Q\{ \psi \left( z\right) ,z\}$ by Property~1 of Section~\ref{sct:props}.
Differentiating~\eqref{eq:D} with respect to $z$
yields
\begin{equation*}
D^{\prime }\left( z\right) =-c\psi ^{\prime }\left( z\right) \sum_{\ell
=1}^{L}\frac{p_{\ell }}{\{ \psi (z) -\sigma_{\ell
}^{2}\} ^{2}},
\end{equation*}
and so we need to find $\psi ^{\prime }\left( z\right) $ in terms of $\psi \left( z\right) $.
Substituting the expressions for the partial derivatives $\partial Q \{
\psi \left( z\right) ,z\}/\partial z$ and $\partial Q\{\psi \left( z\right) ,z\}/\partial s$ into~\eqref{eq:psidiff} and simplifying we obtain
$\psi ^{\prime }\left( z\right) =2cz/\gamma \left( z\right) $,
where the denominator is
\begin{equation*}
\gamma \left( z\right) = c-1+\frac{2cz^{2}}{\psi \left( z\right) }-c\sum_{\ell =1}^{L}\frac{p_{\ell }\psi^2 \left( z\right) }{\{ \psi \left( z\right) -\sigma_{\ell }^{2}\} ^{2}} .
\end{equation*}
Note that
\begin{equation*}
\frac{2cz^{2}}{\psi \left( z\right) }=-2\left( c-1\right) +c\sum_{\ell
=1}^{L}\frac{2p_{\ell }\psi \left( z\right) }{\psi \left( z\right) -\sigma
_{\ell }^{2}},
\end{equation*}
because $0=Q\{ \psi \left( z\right) ,z\}$ for $z>b$. Substituting
into $\gamma(z)$ and forming a common denominator, then dividing with respect to $\psi(z)$ yields
\begin{equation*}
\gamma \left( z\right)
=1-c +c\sum_{\ell =1}^{L}p_{\ell }\frac{\psi^2 \left(z\right)-2\psi \left( z\right) \sigma_{\ell }^{2}}{\{ \psi \left( z\right)
-\sigma_{\ell }^{2}\} ^{2}}
=1-c\sum_{\ell =1}^{L}\frac{p_{\ell }\sigma_{\ell }^{4}}{\{ \psi\left(
z\right) -\sigma_{\ell }^{2}\} ^{2}} =A\{ \psi
\left(z\right)\},
\end{equation*}
where $A(x)$ was defined in~\eqref{eq:ABdef}.
Thus
\begin{equation}  \label{eq:psip}
\psi ^{\prime }\left( z\right) =\frac{2cz}{A\{ \psi \left( z\right)
\} },
\end{equation}
and
\begin{equation}  \label{eq:Dp}
\frac{D^{\prime }\left( z\right) }{z}=-\frac{2c^{2}}{A\{ \psi \left(
z\right) \} }\sum_{\ell =1}^{L}\frac{p_{\ell }}{\{ \psi \left(
z\right) -\sigma_{\ell }^{2}\} ^{2}}
=-\frac{2c}{\tht_i^2}\frac{B_i'\{\psi(z)\}}{A\{ \psi \left(
z\right) \} },
\end{equation}
where $B_i'(x)$ is the derivative of $B_i(x)$ defined in~\eqref{eq:ABdef}.

\subsection{Express the asymptotic recoveries in terms of only~\texorpdfstring{$\psi(b^+)$}{psi(b+)} and~\texorpdfstring{$\psi(\rho_i)$}{psi(rho\_i)}}

\label{sct:r_psi}
Evaluating~\eqref{eq:D} in the limit $z=b^+$ and recalling that $D(b^+)=1/\thb^2$ yields
\begin{equation} \label{eq:cond_brho}
\tht_{i}^{2}>\thb^{2} \quad \Leftrightarrow \quad
0>1-\frac{\tht_{i}^{2}}{\thb^{2}}=1-c\tht_{i}^{2}\sum_{\ell
=1}^{L}\frac{p_{\ell }}{\psi \left( b^{+}\right) -\sigma_{\ell }^{2}}=B_{i}\left\{ \psi \left( b^{+}\right) \right\},
\end{equation}
where $B_i(x)$ was defined in~\eqref{eq:ABdef}. Evaluating the inverse function~\eqref{eq:psiinv} both for $\psi(\rho_i)$ and in the limit $\psi(b^+)$ then substituting into~\eqref{eq:initialtheta} yields
\begin{equation} \label{eq:theta_brho}
\thh_i^2
\asto
\begin{cases}
\displaystyle \frac{\psi(\rho_i)}{c}\left\{1+c\sum_{\ell=1}^L\frac{p_\ell\sigma_\ell^2}{\psi(\rho_i)-\sigma_\ell^2}\right\}
& \text{if } B_i \{ \psi(b^+)\} < 0,\\
\displaystyle \frac{\psi(b^+)}{c}\left\{1+c\sum_{\ell=1}^L\frac{p_\ell\sigma_\ell^2}{\psi(b^+)-\sigma_\ell^2}\right\}
 & \text{otherwise.}
\end{cases}
\end{equation}
Evaluating~\eqref{eq:Dp} for $z=\rho_i$ and substituting into~\eqref{eq:rspsi} yields
\begin{align} \label{eq:rs_brho}
|\langle \uh_i,\Span\{\ut_j:\tht_j   =  \tht_i\} \rangle|^2
&\asto \frac{1}{\psi(\rho_i)}\frac{A\{ \psi(\rho_i)\}}{B_i'\{ \psi(\rho_i)\} } ,\\
\left|\left\langle \frac{\zh^{(i)}}{\sqrt{n}},\Span\{\zt^{(j)}:\tht_j   =  \tht_i\} \right\rangle\right|^2
&\asto \frac{1}{c\{\psi(\rho_i)+(1-c)\tht_i^2\}}\frac{A\{ \psi(\rho_i)\}}{B_i' \{ \psi(\rho_i)\}}, \nonumber
\end{align}
if $B_{i} \{ \psi \left( b^{+}\right) \} <0$.

\subsection{Obtain algebraic descriptions}

\label{sct:r_alg}

This subsection obtains algebraic descriptions of~\eqref{eq:cond_brho},~\eqref{eq:theta_brho} and~\eqref{eq:rs_brho} by showing that $\psi(b^+)$ is the largest real root of $A(x)$ and that $\psi(\rho_i)$ is the largest real root of $B_i(x)$ when $\tht_{i}^{2}>\thb^{2}$.
Evaluating~\eqref{eq:psip} in the limit $z=b^+$ yields
\begin{equation} \label{eq:Apsibroot}
A\{ \psi \left( b^{+}\right) \}=\frac{2cb}{\psi ^{\prime }\left(
b^{+}\right) }=0,
\end{equation}
because $\psi ^{\prime }\left( b^{+}\right) =\infty $ by Property~3 of Section~\ref{sct:props}.
If $\tht_{i}^{2}>\thb^2$ then $\rho_{i} = D^{-1}( 1/\tht
_{i}^{2}) $ and so
\begin{equation} \label{eq:Bpsirhoroot}
0=1-\tht_{i}^{2}D\left( \rho_{i}\right) =1-c\tht_{i}^{2}\sum_{\ell
=1}^{L}\frac{p_{\ell }}{\psi \left( \rho_{i}\right) -\sigma_{\ell }^{2}}
=B_{i}\{ \psi \left( \rho_{i}\right) \}.
\end{equation}
\eqref{eq:Apsibroot} shows that $\psi \left(b^{+}\right) $ is a real root of $A(x)$, and~\eqref{eq:Bpsirhoroot} shows that
 $\psi \left( \rho_{i}\right) $ is a real root of $B_{i}(x)$.

Recall that $\psi(b^+),\psi(\rho_i) \geq \max_\ell ( \sigma_\ell^2) $ by Property~2 of Section~\ref{sct:props}, and note that both $A(x)$ and $B_i(x)$ monotonically increase for $x>\max_\ell ( \sigma_\ell^2) $.
Thus each has exactly one real root larger than $\max_\ell ( \sigma_\ell^2)$, i.e., its largest real root, and so $\psi(b^+)=\alpha$ and $\psi(\rho_i)=\beta_i$ when $\tht_{i}^2>\thb^2$, where $\alpha$ and $\beta_i$ are the largest real roots of $A(x)$ and $B_i(x)$, respectively.

A subtle point is that $A(x)$ and $B_i(x)$ always have largest real roots $\alpha$ and $\beta$ even though $\psi(\rho_i)$ is defined only when $\tht_{i}^{2}>\thb^{2}$.
Furthermore, $\alpha$ and $\beta$ are always larger than $\max_\ell (\sigma_\ell^2)$ and both $A(x)$ and $B_i(x)$ are monotonically increasing in this regime and so we have the equivalence
\begin{equation} \label{eq:equiv}
B_{i}\left( \alpha \right) <0 \quad   \Leftrightarrow \quad   \alpha <\beta_{i} \quad  \Leftrightarrow   \quad 0<A\left( \beta
_{i}\right).
\end{equation}
Writing~\eqref{eq:cond_brho},~\eqref{eq:theta_brho} and~\eqref{eq:rs_brho}
in terms of $\alpha$ and $\beta_i$, then applying the equivalence~\eqref{eq:equiv} and combining with~\eqref{eq:initzero} yields the main results~\eqref{eq:rstheta},~\eqref{eq:rs} and~\eqref{eq:rsright}.

\section{Proof of Theorem~\ref{thm:bound}} \label{sct:bound}

If $A(\beta_i) \geq 0$ then~\eqref{eq:rs} and~\eqref{eq:rsright} increase with $A(\beta_i)$ and decrease with $\beta_i$ and $B'(\beta_i)$.
Similarly,~\eqref{eq:rstheta} increases with~$\beta_i$, as illustrated by~\eqref{eq:rstheta_alt}. As a result, Theorem~\ref{thm:bound} follows immediately from
the following bounds, all of which are met with equality if and only if $\sigma_1^2 = \cdots = \sigma_L^2$:
\begin{align}
\beta_i \geq c\tht_i^2+\sigmab^2 , \quad
B_i'(\beta_i) \geq \frac{1}{c\tht_i^2}  ,\quad
A(\beta_i) \leq 1-\frac{1}{c}\left(\frac{\sigmab}{\tht_i}\right)^4.
\label{eq:intbounds}
\end{align}
The bounds~\eqref{eq:intbounds} are shown by exploiting convexity to appropriately bound the rational functions $B_i(x)$, $B_i'(x)$ and $A(x)$.
We bound $\beta_i$ by noting that
\begin{equation*}
0 = B_i(\beta_i)
= 1- c\tht_i^2 \sum_{\ell=1}^L\frac{p_\ell}{\beta_i-\sigma_\ell^2}
\leq 1-\frac{c\tht_i^2}{\beta_i-\sigmab^2},
\end{equation*}
because $\sigma_\ell^2 < \beta_i$ and $f(v)= 1/(\beta_i - v)$ is a strictly convex function over $v < \beta_i$. Thus $\beta_i \geq c\tht_i^2 + \sigmab^2$.
We bound $B_i'(\beta_i)$ by noting that
\begin{equation*}
B_i'(\beta_i) = c\tht_i^2 \sum_{\ell=1}^L \frac{p_\ell}{(\beta_i-\sigma_\ell^2)^2}
\geq c\tht_i^2 \left(\sum_{\ell=1}^L \frac{p_\ell}{\beta_i-\sigma_\ell^2}\right)^2
= c\tht_i^2 \left(\frac{1}{c\tht_i^2}\right)^2
= \frac{1}{c\tht_i^2},
\end{equation*}
because the quadratic function $z^2$ is strictly convex.
Similarly,
\begin{equation*}
A(\beta_i) = 1 - c\sum_{\ell=1}^L \frac{p_\ell\sigma_\ell^4}{(\beta_i-\sigma_\ell^2)^2}
\leq 1 - c\left(\sum_{\ell=1}^L \frac{p_\ell\sigma_\ell^2}{\beta_i-\sigma_\ell^2}\right)^2
\leq 1-\frac{1}{c}\left(\frac{\sigmab}{\tht_i}\right)^4,
\end{equation*}
because the quadratic function $z^2$ is strictly convex and
\begin{equation*}
\sum_{\ell=1}^L \frac{p_\ell\sigma_\ell^2}{\beta_i-\sigma_\ell^2}
= \beta_i\sum_{\ell=1}^L \frac{p_\ell}{\beta_i-\sigma_\ell^2} - 1
= \frac{\beta_i}{c\tht_i^2} - 1
\geq \frac{c\tht_i^2+\sigmab^2}{c\tht_i^2} - 1
= \frac{\sigmab^2}{c\tht_i^2}.
\end{equation*}
All of the above bounds are met with equality if and only if $\sigma_1^2 = \cdots =\sigma_L^2$ because the convexity in all cases is strict.
As a result, homoscedastic noise minimizes~\eqref{eq:rstheta}, and it maximizes~\eqref{eq:rs} and~\eqref{eq:rsright}.
See Section~\ref{sct:add_props} of the Online Supplement for some interesting additional properties in this context.

\section{Discussion and extensions}

\label{sct:discussion}
This paper provided simplified expressions (Theorem~\ref{thm:rs}) for the asymptotic recovery of a low-dimensional subspace, the corresponding subspace amplitudes and the corresponding coefficients by the principal components, PCA amplitudes and scores, respectively, obtained from applying PCA to noisy high-dimensional heteroscedastic data.
The simplified expressions provide generalizations of previous results for the special case of homoscedastic data. They were derived by first connecting several recent results from random matrix theory~\cite{bai2010sao,benaych2012tsv} to obtain initial expressions for asymptotic recovery that are difficult to evaluate and analyze, and then exploiting a nontrivial structure in the expressions to find the much simpler algebraic descriptions of Theorem~\ref{thm:rs}.

These descriptions enable both easy and efficient calculation as well as reasoning about the asymptotic performance of PCA.
In particular, we use the simplified expressions to show that, for a fixed average noise variance, asymptotic subspace recovery, amplitude recovery and coefficient recovery are all worse when the noise is heteroscedastic as opposed to homoscedastic (Theorem~\ref{thm:bound}).
Hence, while average noise variance is often a practically convenient measure for the overall quality of data, it gives an overly optimistic estimate of PCA performance. Our expressions~\eqref{eq:rstheta},~\eqref{eq:rs} and~\eqref{eq:rsright} in Theorem~\ref{thm:rs} are more accurate.

We also investigated examples to gain insight into how the asymptotic performance of PCA depends on the model parameters:
sample-to-dimension ratio~$c$, subspace amplitudes~$\tht_1,\dots,\tht_k$, proportions~$p_1,\dots,p_L$ and noise variances~$\sigma_1^2,\dots,\sigma_L^2$.
We found that performance depends in expected ways on
\begin{itemize}
\item [a)]
sample-to-dimension ratio: performance improves with more samples;
\item [b)]
subspace amplitudes: performance improves with larger amplitudes;
\item [c)]
proportions: performance improves when more samples have low noise.
\end{itemize}
We also learned that when the gap between the two largest noise variances is ``sufficiently wide'', the performance is dominated by the largest noise variance.
This result provides insight into why PCA performs poorly in the presence of gross errors and why heteroscedasticity degrades performance in the sense of Theorem~\ref{thm:bound}.
Nevertheless, adding ``slightly'' noisier samples to an existing dataset can still improve PCA performance; even adding significantly noisier samples can be beneficial if they are sufficiently numerous.

Finally, we presented numerical simulations that demonstrated concentration of subspace recovery to the asymptotic prediction~\eqref{eq:rs} with good agreement for practical problem sizes.
The same agreement occurs for the PCA amplitudes and coefficient recovery.
The simulations also showed good agreement with the conjectured phase transition (Conjecture~\ref{conj:rs}).

There are many exciting avenues for extensions and further work.
An area of ongoing work is the extension of our analysis to a weighted version of PCA, where
the samples are first weighted to
reduce the impact of noisier points.
Such a method may be natural when the noise variances are known or can be estimated well.
Data formed in this way do not match the model of~\cite{benaych2012tsv}, and so the analysis involves first extending the results of~\cite{benaych2012tsv} to handle this more general case. Preliminary findings suggest that whitening the noise with inverse noise variance weights $1/\sigma_\ell^2$ is not optimal.

Another natural direction is to consider a general distribution of noise variances $\nu$, where we suppose that the empirical noise distribution
$(\delta_{\eta_1^2} + \cdots + \delta_{\eta_n^2}) /n \asto  \nu$
as $n \to \infty$.
We conjecture that if $\eta_1,\dots,\eta_n$ are bounded for all $n$ and $\int d\nu(\tau)/(x-\tau) \to \infty$ as $x \to \tau_\text{max}^+$,  then the almost sure limits in this paper hold but with
\begin{align*}
A\left( x\right) =1-c\int\frac{\tau^2d\nu(\tau)
}{\left( x-\tau\right) ^{2}},\quad
B_{i}\left( x\right) =1-c\tht_{i}^{2}\int\frac{d\nu(\tau)
}{x-\tau},
\end{align*}
where $\tau_\text{max}$ is the supremum of the support of $\nu$.
The proofs of Theorem~\ref{thm:rs} and Theorem~\ref{thm:bound} both generalize straightforwardly for the most part; the main trickiness comes in carefully arguing that limits pass through integrals in Section~\ref{sct:props}.

Proving that there is indeed a phase transition in the asymptotic subspace recovery and coefficient recovery,
as conjectured in Conjecture~\ref{conj:rs}, is another area of future work.
That proof may be of greater interest in the context of a weighted PCA method.
Another area of future work is explaining the puzzling phenomenon described in Section~\ref{sct:impact_sigma}, where, in some regimes, performance improves by increasing the noise variance.
More detailed analysis of the general impacts of the model parameters could also be interesting.
A final direction of future work is deriving finite sample results for heteroscedastic noise as was done for homoscedastic noise in~\cite{nadler2008fsa}.

\begin{appendix}
\section*{Acknowledgments}
The authors thank Raj Rao Nadakuditi and Raj Tejas Suryaprakash for many helpful discussions regarding the singular values and vectors of low rank perturbations of large random matrices.
The authors also thank Edgar Dobriban for his feedback on a draft and for pointing them to the generalized spiked covariance model.
The authors also thank Rina Foygel Barber for suggesting average inverse noise variance be considered and for pointing out that Theorem~\ref{thm:bound} implies an analogous statement for this measure.
Finally, the authors thank the editors and referees for their many helpful comments and suggestions that significantly improved the strength, clarity and style of the paper.
\end{appendix}

\begin{appendix}
\section*{References}

\end{appendix}

\clearpage
\counterwithout{equation}{section}
\renewcommand{\thepage}{S\arabicminus{page}{23}}
\renewcommand{\thesection}{S\arabicminus{section}{0}}
\renewcommand{\thetable}{S\arabic{table}}
\renewcommand{\theequation}{S\arabic{equation}}
\renewcommand{\thefigure}{S\arabicminus{figure}{5}}

\pdfbookmark[0]{Supplement}{bookmark:supp}

\begin{frontmatter}
\title{Supplementary material for ``Asymptotic performance of PCA for\\ high-dimensional heteroscedastic Data''}

\begin{abstract}
This supplement fleshes out several details in~\cite{hong2017aposupp}.
Section~\ref{sct:spike} relates the model~\eqref{eq:model} in the paper to spiked covariance models~\cite{bai2012osesupp,johnstone2001otdsupp}. Section~\ref{sct:add_props} discusses interesting properties of the simplified expressions. Section~\ref{sct:analysis_right} shows the impact of the parameters on the asymptotic PCA amplitudes and coefficient recovery. Section~\ref{sct:exp_rest} contains numerical simulation results for PCA amplitudes and coefficient recovery, and Section~\ref{sct:add_exps} simulates complex-valued and Gaussian mixture data.
\end{abstract}
\end{frontmatter}

\section{Relationship to spiked covariance models} \label{sct:spike}
The model~\eqref{eq:model} considered in the paper is similar in spirit to the generalized spiked covariance model of~\cite{bai2012osesupp}.
To discuss the relationship more easily, we will refer to the paper model~\eqref{eq:model} as the ``inter-sample heteroscedastic model''.
Both this and the generalized spiked covariance model generalize the Johnstone spiked covariance model proposed in~\cite{johnstone2001otdsupp}. In the Johnstone spiked covariance model~\cite{bai2012osesupp}, sample vectors $y_1,\dots,y_n \in \bC^d$ are generated as
\begin{equation}
y_i =
\diag(\alpha_1^2,\dots,\alpha_k^2,\underbrace{1,\dots,1}_{d-k \text{ copies}})^{1/2} x_i, \label{eq:spike}
\end{equation}
where $x_i \in \bC^d$ are independent identically distributed (iid) vectors with iid entries that have mean $\bE (x_{ij}) = 0$ and variance $\bE |x_{ij}|^2=1$.

\begin{sloppypar}
For normally distributed subspace coefficients and noise vectors, the inter-sample heteroscedastic model~\eqref{eq:model} is equivalent (up to rotation) to generating sample vectors $y_1,\dots,y_n \in \bC^d$ as
\end{sloppypar}
\begin{equation} \label{eq:model-cov}
y_i =
\diag(\tht_1^2 + \eta_i^2,\dots,\tht_k^2 + \eta_i^2,\underbrace{\eta_i^2,\dots,\eta_i^2}_{d-k \text{ copies}})^{1/2} x_i,
\end{equation}
where $x_i \in \bC^d$ are iid with iid normally distributed entries.
\eqref{eq:model-cov} generalizes the Johnstone spiked covariance model because the covariance matrix can vary across samples.
Heterogeneity here is {\em across} samples; all entries $(y_i)_1,\dots,(y_i)_d$ within each sample $y_i$ have equal noise variance $\eta_i^2$.

The generalized spiked covariance model generalizes the Johnstone spiked covariance model differently. In the generalized spiked covariance model~\cite{bai2012osesupp}, sample vectors $y_1,\dots,y_n \in \bC^d$ are generated as
\begin{equation} \label{eq:gspike}
y_i =
\begin{bmatrix}
\LLambda &  \\
& \VV_{d-k}
\end{bmatrix}^{1/2}
x_i,
\end{equation}
where $x_i\in\bC^d$ are iid with iid entries as in~\eqref{eq:spike}, $\LLambda \in \bC^{k\times k}$ is a deterministic Hermitian matrix with eigenvalues $\alpha_1^2,\dots,\alpha_k^2$,  $\VV_{d-k} \in \bR^{(d-k) \times (d-k)}$ has limiting eigenvalue distribution $\nu$,
and these all satisfy a few technical conditions~\cite{bai2012osesupp}.
All samples share a common covariance matrix, but the model allows, among other things, for heterogenous variance within the samples. To illustrate this flexibility, note that we could set
\begin{align} \label{eq:gspikeh}
\LLambda &=
\diag(\tht_1^2 + \eta_1^2,\dots,\tht_k^2 + \eta_k^2),
 &
\VV_{d-k} &=
\diag(\eta_{k+1}^2,\dots,\eta_d^2).
\end{align}
In this case, there is heteroscedasticity among the entries of each sample vector.
Heterogeneity here is {\em within} each sample, not across them; recall that all samples have the same covariance matrix.

Therefore, for data with {\em intra}-sample heteroscedasticity, one should use the results of~\cite{bai2012osesupp} and~\cite{yao2015lscsupp} for the generalized spiked covariance model.
For data with {\em inter}-sample heteroscedasticity, one should use the new results presented in Theorem~\ref{thm:rs} of the paper~\cite{hong2017aposupp} for the inter-sample heteroscedastic model.
A couple variants of the inter-sample heteroscedastic model are also natural to consider in the context of spiked covariance models; the next two subsections discuss these.

\subsection{Random noise variances} \label{sct:randommodel}

\begin{figure}[t]
\centering
\begin{subfigure}[t]{0.31\linewidth}
\centering
\includegraphics[width=\linewidth]{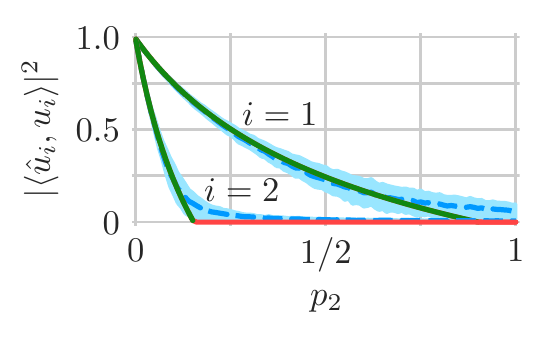}
\caption{Deterministic inter-sample heteroscedastic model.}
\label{fig:mixture_det}
\end{subfigure}
\quad
\begin{subfigure}[t]{0.31\linewidth}
\centering
\includegraphics[width=\linewidth]{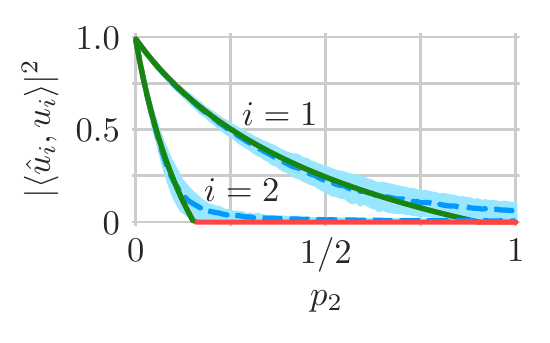}
\caption{Random inter-sample heteroscedastic model.}
\label{fig:mixture_rand}
\end{subfigure}
\quad
\begin{subfigure}[t]{0.31\linewidth}
\centering
\includegraphics[width=\linewidth]{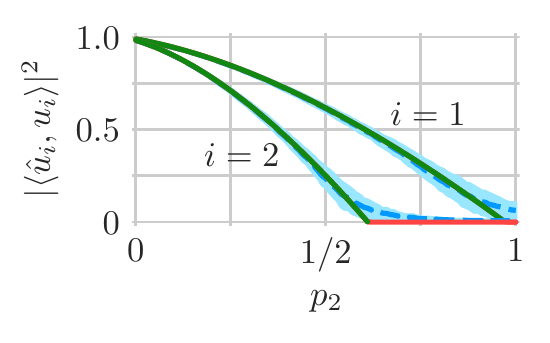}
\caption{Johnstone spiked covariance model with covariance~\eqref{eq:randcov}.}
\label{fig:mixture_spike}
\end{subfigure}
\caption{Simulated subspace recovery as a function of the contamination fraction $p_2$, the proportion of samples with noise variance $\sigma_2^2 = 3.25$, where the other noise variance~$\sigma_1^2=0.1$ occurs in proportion $p_1 = 1-p_2$.
Subspace amplitudes are $\tht_1=1$ and $\tht_2=0.8$, and there are $10^4$ samples in $10^3$ dimensions.
Simulation mean (dashed blue curve) and interquartile interval
(light blue ribbon) are shown with the asymptotic recovery~\eqref{eq:rs} of Theorem~\ref{thm:rs} (green curve).
The region where $A(\beta_i) \leq0$ is the red horizontal segment with  value zero (the prediction of Conjecture~\ref{conj:rs}).
Deterministic noise variances $\eta_1^2,\dots,\eta_n^2$ are used for simulations in (a), random ones are used for those in (b), and (c) has data generated according to the Johnstone spiked covariance model with covariance matrix set as~\eqref{eq:randcov}.
}
\label{fig:mixture}
\end{figure}

The noise variances $\eta_1^2,\dots,\eta_n^2$ in the inter-sample heteroscedastic model~\eqref{eq:model} are deterministic.
A natural variation could be to instead make them iid random variables defined as
\begin{equation} \label{eq:mixture}
\eta_i^2 =
\begin{cases}
\sigma_1^2 & \text{ with probability } p_1, \\
\; \vdots &\\
\sigma_L^2 & \text{ with probability } p_L, \\
\end{cases}
\end{equation}
where $p_1+\dots+p_L=1$.
To ease discussion, this section will
use the words ``deterministic'' and ``random'' before ``inter-sample heteroscedastic model'' to differentiate between the paper model~\eqref{eq:model} that has deterministic noise variances and its variant that instead has iid random noise variances~\eqref{eq:mixture}.
In the random inter-sample heteroscedastic model, scaled noise vectors $\eta_1\varepsilon_1,\dots,\eta_n\varepsilon_n$ are iid vectors drawn from a mixture. As a result, sample vectors $y_1,\dots,y_n$ are also iid vectors with covariance matrix (up to rotation)
\begin{equation} \label{eq:randcov}
\bE (y_iy_i^\CT) =
\diag(\tht_1^2 + \sigmab^2,\dots,\tht_k^2 + \sigmab^2,\underbrace{\sigmab^2,\dots,\sigmab^2}_{d-k \text{ copies}}),
\end{equation}
where $\sigmab^2 = p_1\sigma_1^2+\cdots+p_L\sigma_L^2$ is the average variance.

\eqref{eq:randcov} is a spiked covariance matrix and the samples $y_1,\dots,y_n$ are iid vectors, and so it could be tempting to think that the data can be equivalently generated from the Johnstone spiked covariance model with covariance matrix~\eqref{eq:randcov}.
However this is not true.
The PCA performance of the random inter-sample heteroscedastic model is similar to that of the deterministic version and is different from that of the Johnstone spiked covariance model with covariance matrix~\eqref{eq:randcov}.
Figure~\ref{fig:mixture} illustrates the distinction in numerical simulations.
In all simulations, we drew $10^4$ samples from a $10^3$ dimensional ambient space, where the subspace amplitudes were $\tht_1=1$ and $\tht_2=0.8$. Two noise variances $\sigma_1^2=0.1$ and $\sigma_2^2 = 3.25$ have proportions $p_1=1-p_2$ and $p_2$.
In Figure~\ref{fig:mixture_det}, data are generated according to the deterministic inter-sample heteroscedastic model.
In Figure~\ref{fig:mixture_rand}, data are generated according to the random inter-sample heteroscedastic model.
In Figure~\ref{fig:mixture_spike}, data are generated according to the Johnstone spiked covariance model with covariance matrix~\eqref{eq:randcov}.

Figures~\ref{fig:mixture_det} and~\ref{fig:mixture_rand} demonstrate that data generated according to the inter-sample heteroscedastic model have similar behavior whether the noise variances $\eta_1^2,\dots,\eta_n^2$ are set deterministically or randomly as~\eqref{eq:mixture}.
The similarity is expected because the random noise variances in the limit will equal $\sigma_1^2,\dots,\sigma_L^2$ in proportions approaching $p_1,\dots,p_L$ by the law of large numbers.
Thus data generated with random noise variances should have similar asymptotic PCA performance as data generated with deterministic noise variances.

Figures~\ref{fig:mixture_rand} and~\ref{fig:mixture_spike} demonstrate that data generated according to the random inter-sample heteroscedastic model behave quite differently from data generated according to the Johnstone spiked covariance model, even though both have iid sample vectors with covariance matrix~\eqref{eq:randcov}.
To understand why, recall that in the random inter-sample heteroscedastic model, the noise standard deviation $\eta_i$ is shared among the entries of the scaled noise vector $\eta_i\varepsilon_i$.
This induces statistical dependence among the entries of the sample vector $y_i$ that is not eliminated by whitening with $\bE (y_iy_i^\CT)^{-1/2}$.
Whitening a sample vector $y_i$ generated according to the Johnstone spiked covariance model, on the other hand, produces the vector $x_i$ that has iid entries by definition.
Thus, the random inter-sample heteroscedastic model is not equivalent to the Johnstone spiked covariance model.
One should use Theorem~\ref{thm:rs} in the paper~\cite{hong2017aposupp} to analyze asymptotic PCA performance in this setting rather than existing results for the Johnstone spiked covariance model~\cite{benaych2012tsvsupp,biehl1994smosupp,johnstone2009ocasupp,nadler2008fsasupp,paul2007aossupp}.

\subsection{Row samples}
In matrix form, the inter-sample heteroscedastic model can be written as
\begin{equation*}
\Y = (y_1, \dots, y_n)=\Ut\Tht \Zt^{\CT }+\E\Eta \in \bC^{d\times n},
\end{equation*}
where
\begin{itemize}
\item[] $\Zt= (\zt^{(1)}, \dots, \zt^{(k)}) \in \bC^{n\times k}$ is the coefficient matrix,
\item[] $\E= (\varepsilon_1,\dots,\varepsilon_n) \in \bC^{d\times n}$ is the (unscaled) noise matrix,
\item[] $\Eta = \diag(\eta_{1},\dots,\eta_{n}) \in \bR_+^{n\times n}$ is a diagonal matrix of noise standard deviations.
\end{itemize}
Samples in the paper~\cite{hong2017aposupp} are the columns $y_1,\dots,y_n$ of the data matrix $\Y$, but one could alternatively form samples from the rows
\begin{equation} \label{eq:rows}
y^{(i)} =
\begin{bmatrix}
(y_1)_i \\ \vdots \\ (y_n)_i
\end{bmatrix}
=\Zt^* \Tht \ut^{(i)} + \Eta\varepsilon^{(i)},
\end{equation}
where $\ut^{(i)} = ((\ut_1)_i, \dots, (\ut_n)_i)$ and $\varepsilon^{(i)} =((\varepsilon_1)_i, \dots, (\varepsilon_n)_i)$ are the $i$th rows of $\Ut$ and $\E$, respectively.
Row samples~\eqref{eq:rows} are exactly the columns of the transposed data matrix $\Y^\TT$ and so row samples have the same PCA amplitudes as column samples; principal components and score vectors swap.

In~\eqref{eq:rows}, noise heteroscedasticity is within each row sample $y^{(i)}$ rather than across row samples $y^{(1)},\dots,y^{(d)}$, and so one might think that the row samples could be equivalently generated from the generalized spiked covariance model~\eqref{eq:gspike} with a covariance similar to~\eqref{eq:gspikeh}.
However, the row samples are neither independent nor identically distributed; $\Ut$ induces dependence across rows as well as variety in their distributions. As a result, the row samples do not match the generalized spiked covariance model.

One could make~$\Ut$ random according to the ``i.i.d. model'' of~\cite{benaych2012tsvsupp}. As noted in Remark~\ref{rm:unitaryinv}, Theorem~\ref{thm:rs} from the paper~\cite{hong2017aposupp} still holds and the asymptotic PCA performance is unchanged.
For such~$\Ut$, the row samples $y^{(1)},\dots,y^{(d)}$ are now identically distributed but they are still not independent; dependence arises because~$\Zt$ is shared.
To remove the dependence, one could make $\Zt$ deterministic and also design it so that the row samples are iid with covariance matrix matching that of~\eqref{eq:gspike}, but doing so no longer matches the inter-sample heteroscedastic model.
It corresponds instead to having deterministic coefficients associated with a random subspace. Thus to analyze asymptotic PCA performance for row samples one should still use Theorem~\ref{thm:rs} in the paper~\cite{hong2017aposupp} rather than existing results for the generalized spiked covariance model~\cite{bai2012osesupp,yao2015lscsupp}.

\section{Additional properties} \label{sct:add_props}
This section highlights a few additional properties of $\beta_i$, $B'_i(\beta_i)$ and $A(\beta_i)$ that lend deeper insight into how they vary with the noise variances $\sigma_1^2,\dots,\sigma_L^2$.

\subsection{Expressing \texorpdfstring{$A(\beta_i)$}{A(beta\_i)} in terms of \texorpdfstring{$\beta_i$}{beta\_i} and \texorpdfstring{$B'_i(\beta_i)$}{Bi'(beta\_i)}}
We can rewrite $A(\beta_i)$ in terms of $\beta_i$ and $B'_i(\beta_i)$ as follows:
\begin{align}
A\left( \beta_{i}\right)  &=1-c\sum_{\ell =1}^{L}\frac{p_{\ell }\sigma
_{\ell }^{4}}{\left( \beta_{i}-\sigma_{\ell }^{2}\right) ^{2}}=1-c\sum_{\ell =1}^{L}p_{\ell }\left\{ 1-\frac{-2\beta_{i}\sigma_{\ell
}^{2}+\beta_{i}^{2}}{\left( \beta_{i}-\sigma_{\ell }^{2}\right) ^{2}}\right\}  \nonumber \\&
=1-c\sum_{\ell =1}^{L}p_{\ell } \left\{ 1-\frac{-2\beta_{i}\sigma_{\ell
}^{2}+2\beta_{i}^{2}-\beta_{i}^{2}}{\left( \beta_{i}-\sigma_{\ell
}^{2}\right) ^{2}}\right\}  \nonumber\\
&=1-c\sum_{\ell =1}^{L}p_{\ell } \left\{ 1+\beta_{i}^{2}\frac{1}{\left(
\beta_{i}-\sigma_{\ell }^{2}\right) ^{2}}-2\beta_{i}\frac{1}{\beta
_{i}-\sigma_{\ell }^{2}}\right\}  \nonumber\\&
=1-c\sum_{\ell =1}^{L}p_{\ell }-c\beta_{i}^{2}\sum_{\ell =1}^{L}\frac{p_{\ell }}{\left( \beta_{i}-\sigma_{\ell }^{2}\right) ^{2}}+2c\beta
_{i}\sum_{\ell =1}^{L}\frac{p_{\ell }}{\beta_{i}-\sigma_{\ell }^{2}} \nonumber\\
&=1-c-c\beta_{i}^{2} \left\{ \frac{1}{c\tht_{i}^{2}}B_{i}^{\prime }\left(
\beta_{i}\right) \right\} +2c\beta_{i} \left\{ \frac{1-B_{i}\left( \beta
_{i}\right) }{c\tht_{i}^{2}}\right\}  \nonumber\\&
=1-c-\frac{\beta_i}{\tht_{i}^{2}} \{ \beta_{i}B_{i}^{\prime }\left(
\beta_{i}\right) -2\}, \label{eq:AintermsbetaB}
\end{align}
since $B_i(\beta_i)=0$.
Thus we focus on properties of $\beta_i$ and $B'_i(\beta_i)$ for the remainder of Section~\ref{sct:add_props}; \eqref{eq:AintermsbetaB} relates them back to $A(\beta_i)$.

\subsection{Graphical illustration of \texorpdfstring{$\beta_i$}{beta\_i}}

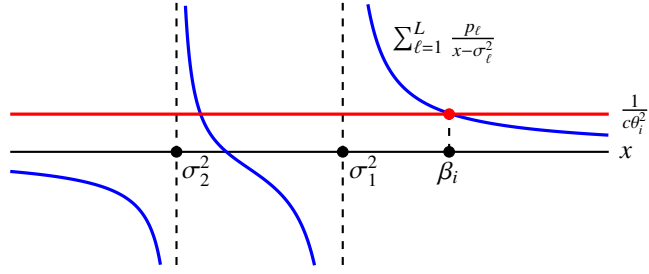
\begin{figure}[t]
\centering \vspace{-7mm}
\begin{tikzpicture}[xscale=1.75,yscale=0.5,domain=-1:5,samples=400]
  \draw[thick] (-0.5,0) -- (4,0) node[right] {$x$};
  \draw[thick,dashed] (0.75,-3) -- (0.75,3.95);
  \filldraw (0.75,0) circle [x radius=0.4mm, y radius=1.4mm];
  \draw (0.9,0.15) node[below] {$\sigma_2^2$};
  \draw[thick,dashed] (2,-3) -- (2,3.95);
  \filldraw (2,0) circle [x radius=0.4mm, y radius=1.4mm];
  \draw (2.16,0.15) node[below] {$\sigma_1^2$};

  \draw[blue,very thick,domain=-0.5:0.631]  plot (\x,{0.7/(\x-2)+0.3/(\x-0.75)});
  \draw[blue,very thick,domain=0.8172:1.79] plot (\x,{0.7/(\x-2)+0.3/(\x-0.75)});
  \draw[blue,very thick,domain=2.189:4]    plot (\x,{0.7/(\x-2)+0.3/(\x-0.75)});
  \draw (2.3,2.9) node[right] {$\sum_{\ell=1}^L \frac{p_\ell}{x-\sigma_\ell^2}$};

  \draw[red,very thick] (-0.5,1) -- (4,1) node[black,right] {$\frac{1}{c\tht_i^2}$};
  \draw[thick,dashed] (2.8,0) -- (2.8,1);
  \filldraw[red] (2.8,1) circle [x radius=0.4mm, y radius=1.4mm];
  \filldraw (2.8,0) circle [x radius=0.4mm, y radius=1.4mm] node[below] {$\beta_i$};
\end{tikzpicture}
\caption{Location of the largest real root $\beta_i$ of $B_i(x)$ for two noise variances $\sigma_1^2 = 2$ and $\sigma_2^2 =0.75$, occurring in proportions~$p_1=70\%$ and $p_2 = 30\%$, where the sample-to-dimension ratio is $c=1$ and the subspace amplitude is $\tht_i=1$.}
\label{fig:betaRoot}
\end{figure}

Note that $\beta_i$ is the largest solution of
\begin{equation} \label{eq:betaRoot}
\frac{1}{c\tht_i ^{2}}=\sum_{\ell =1}^{L}\frac{p_{\ell }}{x-\sigma_{\ell
}^{2}},
\end{equation}
because $\beta_i$ is the largest real root of $B_i(x)$.
Figure~\ref{fig:betaRoot} illustrates~\eqref{eq:betaRoot} for two noise variances $\sigma_1^2 = 2$ and $\sigma_2^2 =0.75$, occurring in proportions~$p_1=70\%$ and $p_2 = 30\%$, where the sample-to-dimension ratio is $c=1$ and the subspace amplitude is $\tht_i=1$.
The plot is a graphical representation of $\beta_i$ and gives a way to visualize the relationship between $\beta_i$ and the model parameters. Observe, for example, that $\beta_i$ is larger than all the noise variances and that increasing $\tht_i$ or $c$ amounts to moving the horizontal red line down and tracking the location of the intersection.

\subsection{Level curves}

\begin{figure}[t]
\centering
\begin{subfigure}[t]{0.48\linewidth}
\centering
\includegraphics[scale=0.9]{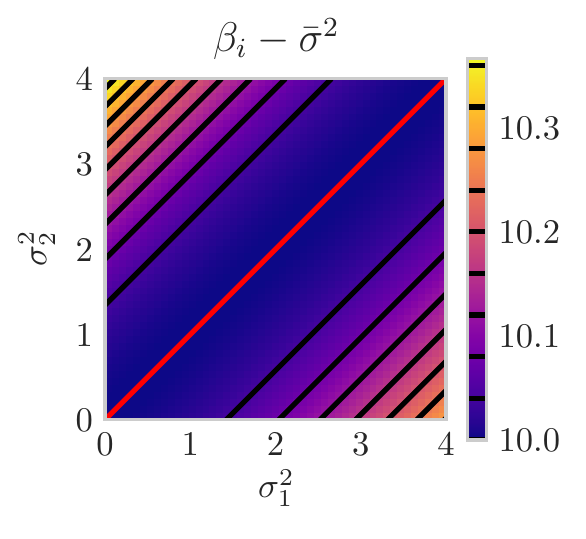}
\caption{$\beta_i - \sigmab^2$ over $\sigma_1^2$ and $\sigma_2^2$.}
\end{subfigure}
\quad
\begin{subfigure}[t]{0.48\linewidth}
\centering
\includegraphics[scale=0.9]{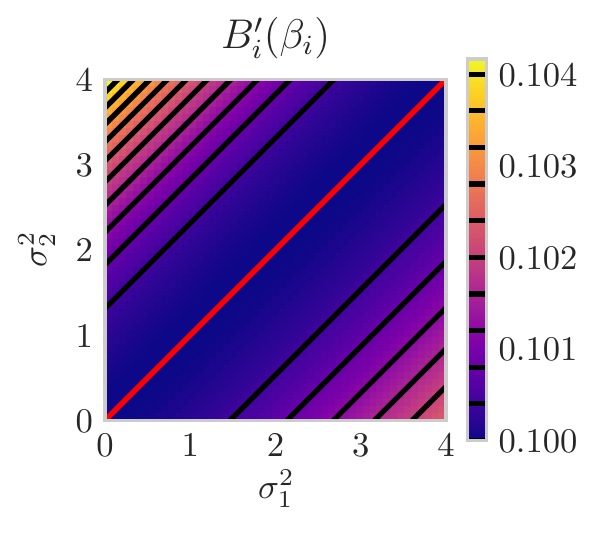}
\caption{$B'_i(\beta_i)$ over $\sigma_1^2$ and $\sigma_2^2$.}
\end{subfigure}
\caption{Illustration of $\beta_i - \sigmab^2$ and $B'_i(\beta_i)$ as a function of two noise variances $\sigma_1^2$ and $\sigma_2^2$.
The level curves are along lines parallel to $\sigma_1^2 = \sigma_2^2$ for all values of sample-to-dimension ratio $c$, proportions $p_1$ and $p_2$, and subspace amplitude $\tht_i$}
\label{fig:betaBp}
\end{figure}

Figure~\ref{fig:betaBp} shows $\beta_i - \sigmab^2$ and $B'_i(\beta_i)$ as functions (implicitly) of $L=2$  noise variances $\sigma_1^2$ and $\sigma_2^2$, where
\begin{equation*}
\sigmab^2 = p_1\sigma_1^2 + \cdots + p_L\sigma_L^2
\end{equation*}
is the average noise variance.
Figure~\ref{fig:betaBp} illustrates that lines parallel to the diagonal $\sigma_1^2=\sigma_2^2$ are level curves for both $\beta_i - \sigmab^2$ and $B'_i(\beta_i)$.
This is a general phenomenon: lines parallel to the diagonal $\sigma_1^2=\cdots=\sigma_L^2$ are level curves of both $\beta_i -\sigmab^2$ and $B'_i(\beta_i)$ for all sample-to-dimension ratios $c$, proportions $p_1,\dots,p_L$ and subspace amplitudes $\tht_i$.

To show this fact, note that $\beta_i-\sigmab^2$ is the largest real solution to
\begin{equation} \label{eq:betabar}
0=B_i(x+\sigmab^2)
= 1-c\tht_i^2 \sum_{\ell=1}^L \frac{p_\ell}{x-(\sigma_\ell^2-\sigmab^2)},
\end{equation}
because $0=B_i(\beta_i)$. Changing the noise variances to $\sigma_1^2+\Delta,\dots,\sigma_L^2+\Delta$ for some $\Delta$ also changes the average noise variance to $\sigmab^2 + \Delta$ and so $\sigma_\ell^2-\sigmab^2$ remains unchanged. As a result, the solutions to~\eqref{eq:betabar} remain unchanged.

Similarly, note that
\begin{equation} \label{eq:Bpbar}
B'_i(\beta_i) = c\tht_i^2 \sum_{\ell=1}^L \frac{p_\ell}{(\beta_i-\sigma_\ell^2)^2}
= c\tht_i^2 \sum_{\ell=1}^L \frac{p_\ell}{ \{(\beta_i-\sigmab^2)-(\sigma_\ell^2-\sigmab^2)\}^2 }
\end{equation}
remains unchanged when changing the noise variances to $\sigma_1^2+\Delta,\dots,\sigma_L^2+\Delta$.

Thus we conclude from~\eqref{eq:betabar} and~\eqref{eq:Bpbar} that lines parallel to $\sigma_1^2=\cdots=\sigma_L^2$ are level curves for both $\beta_i -\sigmab^2$ and $B'_i(\beta_i)$.
The line $\sigma_1^2=\cdots=\sigma_L^2$ in particular minimizes the value of both, as was established in the proof of Theorem~\ref{thm:bound}.

\subsection{Hessians along the line \texorpdfstring{$\sigma_1^2=\cdots=\sigma_L^2$}{sigma1\^{}2=...=sigmaL\^{}2}}
We consider $\beta_i-\sigmab^2$ and $B'_i(\beta_i)$ as functions (implicitly) of the noise variances $\sigma_1^2,\dots,\sigma_L^2$.
To denote derivatives more clearly, we  denote the $i$th noise variance as $v_i = \sigma_i^2$.

Written in this notation, we have
\begin{align}
0 = 1-c\tht_i^2 \sum_{\ell=1}^L \frac{p_\ell}{\beta_i-v_\ell}, \label{eq:betav} \\
B'_i(\beta_i) = c\tht_i^2 \sum_{\ell=1}^L \frac{p_\ell}{(\beta_i-v_\ell)^2}. \label{eq:Bpv}
\end{align}
Taking the total derivative of~\eqref{eq:betav} with respect to $v_s$ and $v_t$ and solving for $\partial^2\beta_i/(\partial v_t \partial v_s)$ yields an initially complicated expression, but evaluating it on the line $v_1=\cdots=v_L$ vastly simplifies it, yielding:
\begin{equation} \label{eq:hessbeta}
\frac{\partial^2(\beta_i-\sigmab^2)}{\partial v_t\partial v_s}
=\frac{2}{c\tht_i^2}(p_s \delta_{s,t} - p_s p_t).
\end{equation}
where $\delta_{s,t} = 1$ if $s=t$ and $0$ otherwise.
Notably, $\sigmab^2 = p_1v_1+\cdots+p_Lv_L$ has zero Hessian everywhere.

Likewise, taking the total derivative of~\eqref{eq:Bpv} with
respect to $v_s$ and $v_t$ yields an initially complicated expression that is again vastly simplified by evaluating it on the line $v_1=\cdots=v_L$, yielding:
\begin{equation} \label{eq:hessBp}
\frac{\partial^2 B'_i(\beta_i)}{\partial v_t \partial v_s}
=\frac{2}{(c\tht_i^2)^4}(p_s \delta_{s,t} - p_s p_t).
\end{equation}
\eqref{eq:hessbeta} and~\eqref{eq:hessBp} show that the Hessian matrices for $\beta_i-\sigmab^2$ and $B'_i(\beta_i)$ are both scaled versions of the matrix
\begin{equation} \label{eq:hessbetaBp}
\Hess =
\underbrace{
\begin{bmatrix}
p_1 & & \\
& \ddots & \\
& & p_L
\end{bmatrix}
}_{\diag(p)}
-
\underbrace{
\begin{bmatrix}
p_1 \\ \vdots \\ p_L
\end{bmatrix}
\begin{bmatrix}
p_1 & \cdots & p_L
\end{bmatrix}
}_{pp^\TT}
\end{equation}
on the line $v_1=\cdots=v_L$.
The (scaled) Hessian matrix~\eqref{eq:hessbetaBp} is a rank one perturbation by $-pp^\TT$ of $\diag(p)$, and so its eigenvalues downward interlace with those of $\diag(p)$ (see Theorem~8.1.8 of~\cite{golub1996mcsupp}).
Namely, $\Hess$ has eigenvalues $\lambda_1,\dots,\lambda_L$ satisfying
\begin{equation*}
\lambda_1 \leq p_{(1)} \leq \lambda_2 \leq \cdots \leq \lambda_L \leq p_{(L)},
\end{equation*}
where $p_{(1)},\dots ,p_{(L)}$ are the proportions in increasing order.
The vector $\One$ of all ones, i.e., the vector in the direction of $v_1=\cdots=v_L$, is an eigenvector of~$\Hess$ with eigenvalue zero; note that
$\Hess\One=\diag(p) \One-pp^\TT\One=p-p=0$.
This eigenvalue is less than $p_{(1)}>0$ and so $\lambda_1=0$ and $\lambda_2,\dots,\lambda_L\geq p_{(1)}>0$.
Hence the Hessians of $\beta_i-\sigmab^2$ and $B'_i(\beta_i)$ are both zero in the direction of the line $v_1=\cdots=v_L$ and positive definite in other directions. This property provides deeper insight into the fact that $\beta_i-\sigmab^2$ and $B'_i(\beta_i)$ are minimized on the line $\sigma_1^2=\cdots=\sigma_L^2$, as was established in the proof of Theorem~\ref{thm:bound}.

\clearpage
\section{Impact of parameters: amplitude and coefficient recovery} \label{sct:analysis_right}

\begin{sloppypar}
Section~\ref{sct:analysis} of~\cite{hong2017aposupp} discusses how the asymptotic subspace recovery~\eqref{eq:rs} of Theorem~\ref{thm:rs} depends on the model parameters: sample-to-dimension ratio~$c$, subspace amplitudes~$\tht_1,\dots,\tht_k$, proportions~$p_1,\dots,p_L$ and noise variances~$\sigma_1^2,\dots,\sigma_L^2$.
This section shows that the same phenomena occur for the asymptotic PCA amplitudes~\eqref{eq:rstheta} and coefficient recovery~\eqref{eq:rsright}.
For the asymptotic PCA amplitudes, we consider the ratio $\thh_i^2/\tht_i^2$. As discussed in Remark~\ref{rm:bias}, the asymptotic PCA amplitude $\thh_i$ is positively biased relative to the subspace amplitude $\tht_i$, and so the almost sure limit of $\thh_i^2/\tht_i^2$ is greater than one, with larger values indicating more bias.
\end{sloppypar}

\subsection{Impact of sample-to-dimension ratio~\texorpdfstring{$c$}{c} and subspace amplitude~\texorpdfstring{$\tht_i$}{theta\_i}}

\begin{figure}[t]
\centering
\begin{subfigure}[t]{0.49\linewidth}
\centering
\includegraphics[width=\linewidth]{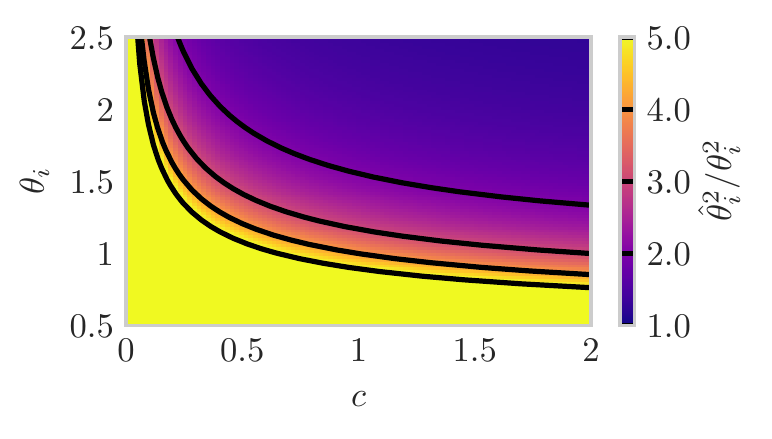}
\caption{Homoscedastic noise with $\sigma_1^2=1$.}
\label{fig:qual2a_t}
\end{subfigure}
\
\begin{subfigure}[t]{0.49\linewidth}
\centering
\includegraphics[width=\linewidth]{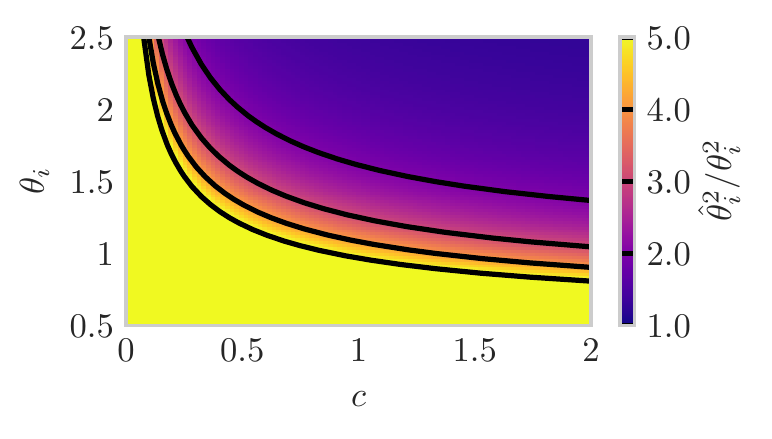}
\caption{Heteroscedastic noise with $p_1 = 80\%$ of samples at $\sigma_1^2=0.8$  and $p_2 = 20\%$ of samples at $\sigma_2^2=1.8$.}
\label{fig:qual2b_t}
\end{subfigure}
\caption{Asymptotic amplitude bias~\eqref{eq:rstheta} of the $i$th PCA amplitude as a function of sample-to-dimension ratio~$c$ and subspace amplitude~$\tht_i$ with average noise variance equal to one.
Contours are overlaid in black. The contours in~(b) are slightly further up and to the right than in (a);
more samples are needed to reduce the positive bias.}
\label{fig:qual2_t}
\end{figure}

\begin{figure}[t!]
\centering
\begin{subfigure}[t]{0.49\linewidth}
\centering
\includegraphics[width=\linewidth]{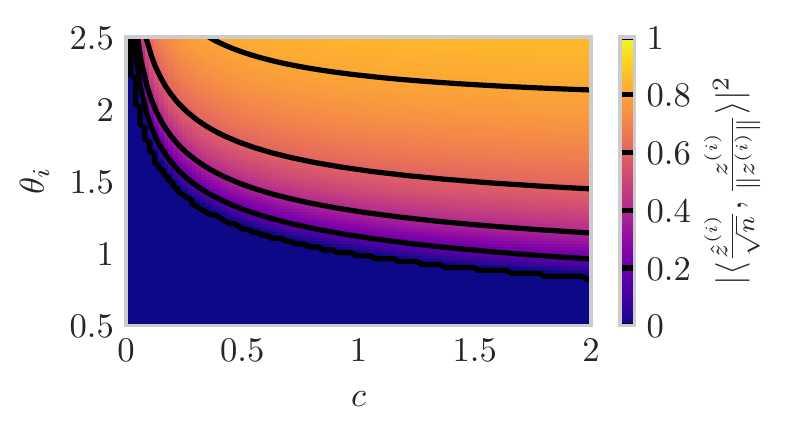}
\caption{Homoscedastic noise with $\sigma_1^2=1$.}
\label{fig:qual2a_z}
\end{subfigure}
\
\begin{subfigure}[t]{0.49\linewidth}
\centering
\includegraphics[width=\linewidth]{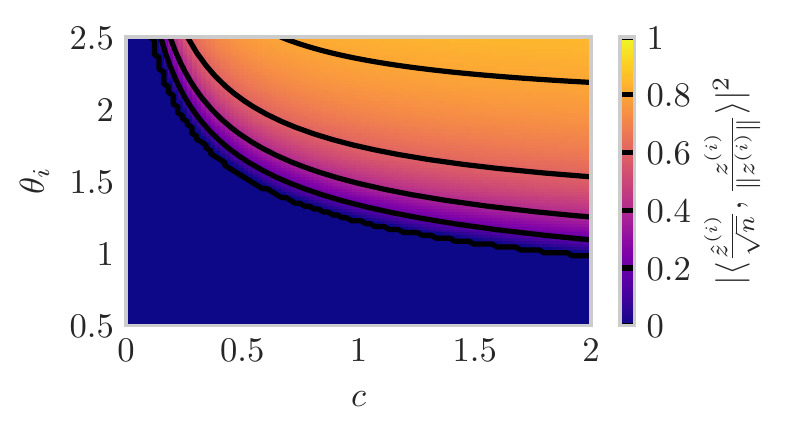}
\caption{Heteroscedastic noise with $p_1 = 80\%$ of samples at $\sigma_1^2=0.8$  and $p_2 = 20\%$ of samples at $\sigma_2^2=1.8$.}
\label{fig:qual2b_z}
\end{subfigure}
\caption{Asymptotic coefficient recovery~\eqref{eq:rsright} of the $i$th score vector as a function of sample-to-dimension ratio~$c$ and subspace amplitude~$\tht_i$ with average noise variance equal to one.
Contours are overlaid in black and the region where $A(\beta_i) \leq0$ is shown as zero (the prediction of Conjecture~\ref{conj:rs}). The phase transition in~(b) is further right than in (a);
more samples are needed to recover the same strength signal.}
\label{fig:qual2_z}
\end{figure}

As in Section~\ref{sct:analysis_ctheta}, we vary the sample-to-dimension ratio $c$ and subspace amplitude $\tht_i$ in two scenarios:
\begin{enumerate}
\item[a)] there is only one noise variance fixed at $\sigma_1^2=1$
\item[b)] there are two noise variances~$\sigma_1^2=0.8$ and~$\sigma_2^2=1.8$ occurring in proportions~$p_1 = 80\%$ and $p_2 = 20\%$.
\end{enumerate}
Both scenarios have average noise variance $1$.
Figures~\ref{fig:qual2_t} and~\ref{fig:qual2_z} show analogous plots to Figure~\ref{fig:qual2} but for the asymptotic PCA amplitudes~\eqref{eq:rstheta} and coefficient recovery~\eqref{eq:rsright}, respectively.

As was the case for Figure~\ref{fig:qual2} in Section~\ref{sct:analysis_ctheta}, decreasing the subspace amplitude $\tht_i$ degrades both the asymptotic amplitude performance (i.e., increases bias) shown in Figure~\ref{fig:qual2_t} and the asymptotic coefficient recovery shown in Figure~\ref{fig:qual2_z}, but the lost performance could be regained by increasing the number of samples.
Furthermore, both the asymptotic amplitude performance shown in Figure~\ref{fig:qual2_t} and the asymptotic coefficient recovery shown in Figure~\ref{fig:qual2_z} decline when the noise is heteroscedastic.
Though the difference is subtle for the asymptotic amplitude bias, the contours move up and to the right in both cases.
This degradation is consistent with Theorem~\ref{thm:bound}; PCA performs worse on heteroscedastic data than it does on homoscedastic data of the same average noise variance and more samples or a larger subspace amplitude are needed to compensate.

\subsection{Impact of proportions~\texorpdfstring{$p_1,\dots,p_L$}{p1,...,pL}}

\begin{figure}[t]
\centering
\begin{subfigure}[t]{0.495\linewidth}
\centering
\includegraphics[height=3.95cm]{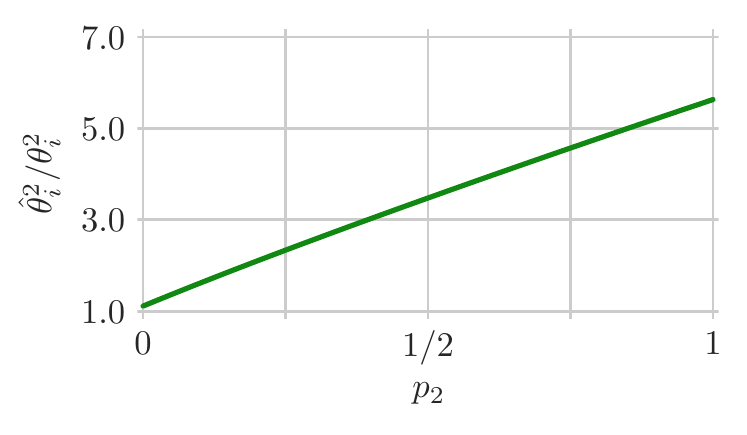}
\caption{Asymptotic amplitude bias~\eqref{eq:rstheta}.}
\label{fig:qual3_t}
\end{subfigure}
\begin{subfigure}[t]{0.495\linewidth}
\centering
\includegraphics[height=3.95cm]{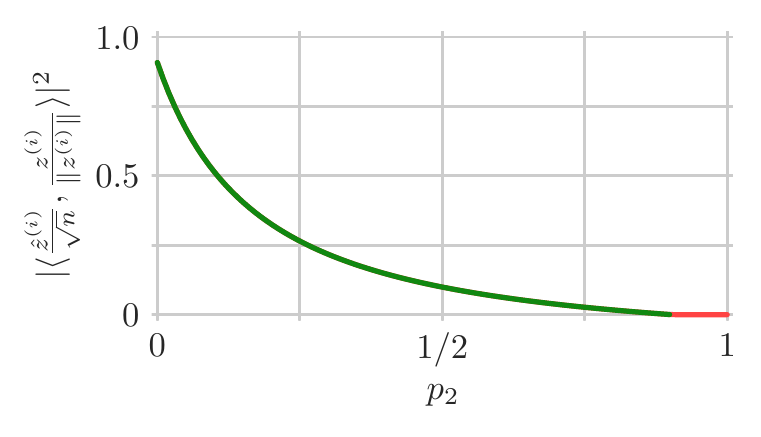}
\caption{Asymptotic coefficient recovery~\eqref{eq:rsright}.}
\label{fig:qual3_z}
\end{subfigure}
\caption{Asymptotic amplitude bias~\eqref{eq:rstheta} and coefficient recovery~\eqref{eq:rsright} of the $i$th PCA amplitude and score vector as functions of the contamination fraction $p_2$, the proportion of samples with noise variance $\sigma_2^2 = 3.25$, where the other noise variance~$\sigma_1^2=0.1$ occurs in proportion $p_1 = 1-p_2$.
The sample-to-dimension ratio is $c=10$ and the subspace amplitude is $\tht_i=1$. The region where $A(\beta_i) \leq0$ is the red horizontal segment in (b) with  value zero (the prediction of Conjecture~\ref{conj:rs}).}
\label{fig:qual3_tz}
\end{figure}

As in Section~\ref{sct:qual3}, we consider two noise variances~$\sigma_1^2=0.1$ and~$\sigma_2^2=3.25$ occurring in proportions~$p_1=1-p_2$ and $p_2$, where
the sample-to-dimension ratio is $c=10$ and the subspace amplitude is~$\tht_i=1$.
Figure~\ref{fig:qual3_tz} shows analogous plots to Figure~\ref{fig:qual3} but for the asymptotic PCA amplitudes~\eqref{eq:rstheta} and coefficient recovery~\eqref{eq:rsright}.
As was the case for Figure~\ref{fig:qual3} in Section~\ref{sct:qual3}, performance generally degrades in Figure~\ref{fig:qual3_tz} as $p_2$ increases and low noise samples with noise variance $\sigma_1^2$ are traded for high noise samples with noise variance $\sigma_2^2$.
The performance is best when $p_2 = 0$ and all the samples have the smaller noise variance~$\sigma_1^2$, i.e., there is no contamination.

\subsection{Impact of noise variances~\texorpdfstring{$\sigma_1^2,\dots,\sigma_L^2$}{sigma1\^{}2,...,sigmaL\^{}2}}

\begin{figure}[t]
\centering
\begin{subfigure}[t]{0.495\linewidth}
\centering
\includegraphics[width=0.88\linewidth]{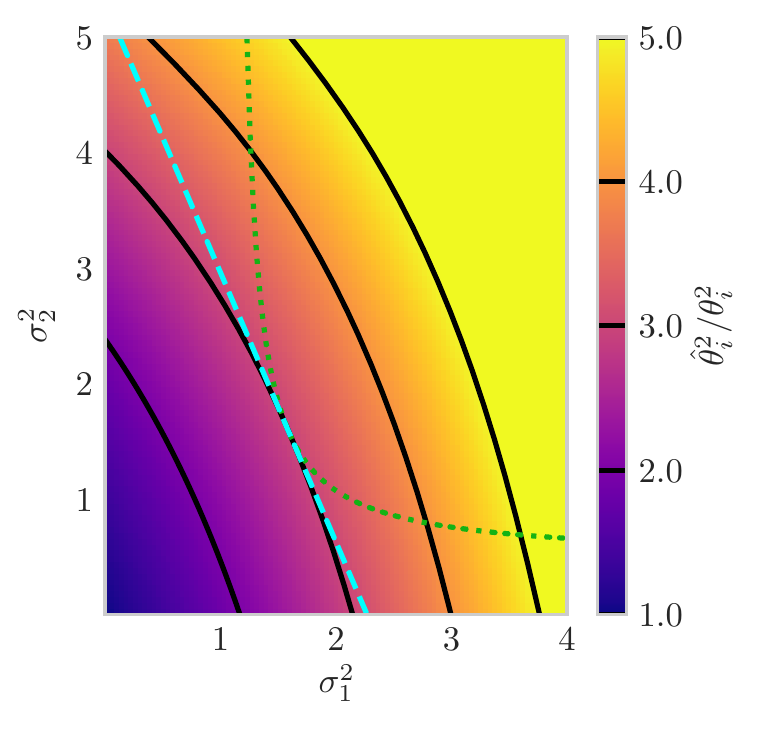}
\caption{Asymptotic amplitude bias~\eqref{eq:rstheta}.}
\label{fig:qual1_t}
\end{subfigure}
\begin{subfigure}[t]{0.495\linewidth}
\centering
\includegraphics[width=0.9\linewidth]{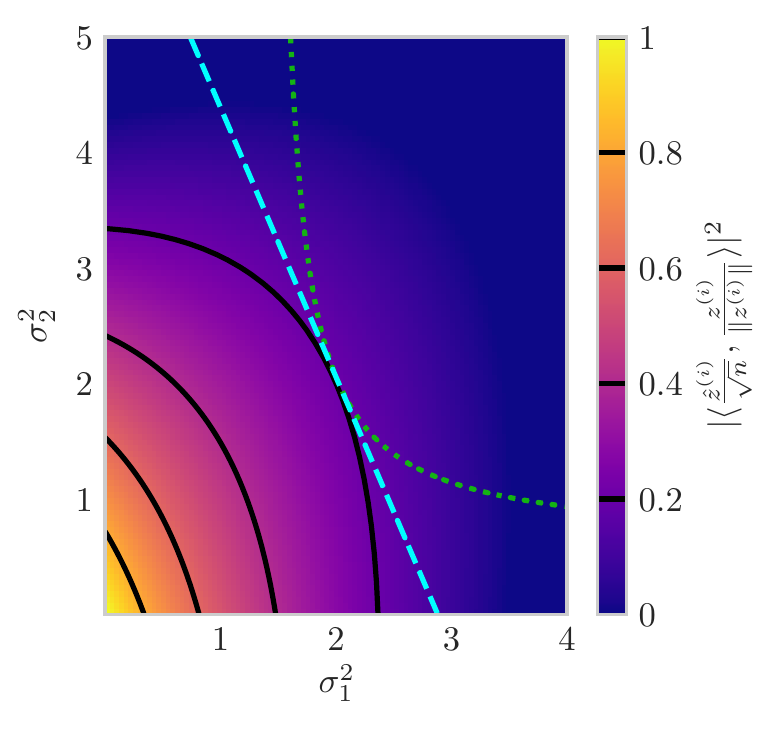}
\caption{Asymptotic coefficient recovery~\eqref{eq:rsright}.}
\label{fig:qual1_z}
\end{subfigure}
\caption{Asymptotic amplitude bias~\eqref{eq:rstheta} and coefficient recovery~\eqref{eq:rsright} of the $i$th PCA amplitude and score vector as functions of noise variances~$\sigma_1^2$ and~$\sigma_2^2$ occurring in proportions~$p_1 = 70\%$ and $p_2 = 30\%$, where
the sample-to-dimension ratio is $c=10$ and the subspace amplitude is $\tht_i=1$. Contours are overlaid in black and the region where $A(\beta_i) \leq0$ is shown as zero in (b), matching the prediction of Conjecture~\ref{conj:rs}.
Along each dashed cyan line, the average noise variance is fixed and the best performance occurs when $\sigma_1^2=\sigma_2^2=\sigmab^2$.
Along each dotted green curve, the average inverse noise variance is fixed and the best performance again occurs when $\sigma_1^2=\sigma_2^2$.
}
\label{fig:qual1_tz}
\end{figure}

As in Section~\ref{sct:impact_sigma}, we consider two noise variances~$\sigma_1^2$ and~$\sigma_2^2$ occurring in proportions~$p_1 = 70\%$ and $p_2 = 30\%$, where
the sample-to-dimension ratio is $c=10$ and the subspace amplitude is $\tht_i=1$.
Figure~\ref{fig:qual1_tz} shows analogous plots to Figure~\ref{fig:qual1} but for the asymptotic PCA amplitudes~\eqref{eq:rstheta} and coefficient recovery~\eqref{eq:rsright}.
As was the case for Figure~\ref{fig:qual1} in Section~\ref{sct:impact_sigma}, performance typically degrades with increasing noise variances.
The contours in Figure~\ref{fig:qual1_z} are also generally horizontal for small $\sigma_1^2$ and vertical for small $\sigma_2^2$. They indicate that when the gap between the two largest noise variances is ``sufficiently'' wide, the asymptotic coefficient recovery is roughly determined by the largest noise variance.
This property mirrors the asymptotic subspace recovery and occurs for similar reasons, discussed in detail in Section~\ref{sct:impact_sigma}.
Along each dashed cyan line in Figure~\ref{fig:qual1_tz}, the average noise variance is fixed and the best performance for both the PCA amplitudes and coefficient recovery again occurs when $\sigma_1^2=\sigma_2^2=\sigmab^2$,
as was  predicted by Theorem~\ref{thm:bound}.
Along each dotted green curve in Figure~\ref{fig:qual1_tz}, the average inverse noise variance is fixed and the best performance for both the PCA amplitudes and coefficient recovery again occurs when $\sigma_1^2=\sigma_2^2$, as was predicted in Remark~\ref{rem:bound}.

\subsection{Impact of adding data} \label{sct:add_tz}
\begin{figure}[t!]
\centering
\begin{subfigure}[t]{0.495\linewidth}
\centering
\includegraphics[width=0.923\linewidth]{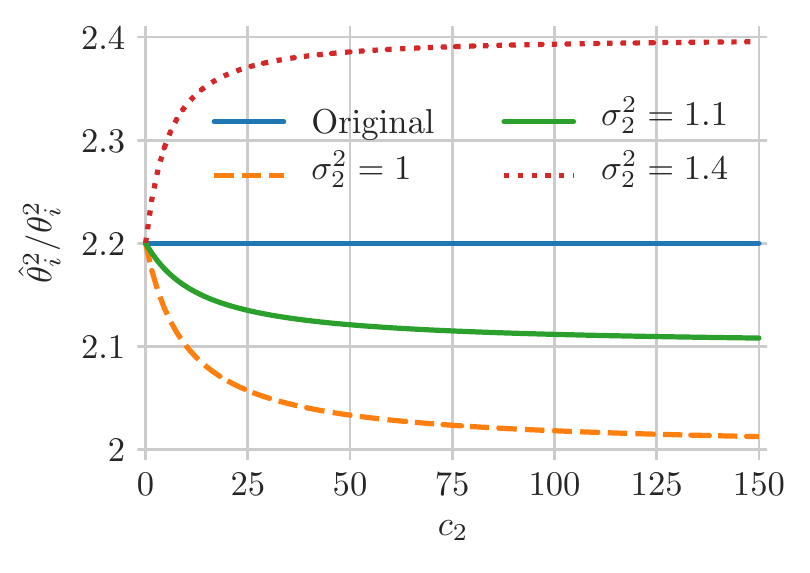}
\caption{Asymptotic amplitude bias~\eqref{eq:rstheta}.}
\label{fig:impact_add_t}
\end{subfigure}
\begin{subfigure}[t]{0.495\linewidth}
\centering
\includegraphics[width=0.95\linewidth]{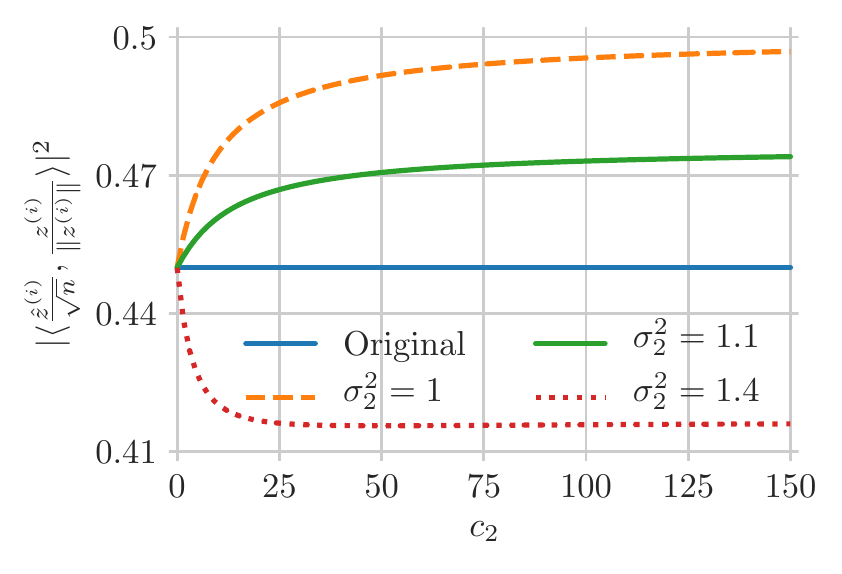}
\caption{Asymptotic coefficient recovery~\eqref{eq:rsright}.}
\label{fig:impact_add_z}
\end{subfigure}
\caption{Asymptotic amplitude bias~\eqref{eq:rstheta} and coefficient recovery~\eqref{eq:rsright} of the $i$th PCA amplitude and score vector for samples added with noise variance $\sigma_2^2$ and samples-per-dimension $c_2$ to an existing dataset with noise variance~$\sigma_1^2 = 1$, sample-to-dimension ratio $c_1=10$ and subspace amplitude $\tht_i=1$.}
\label{fig:impact_add_tz}
\end{figure}

As in Section~\ref{sct:impact_add}, we consider adding data with noise variance~$\sigma_2^2$ and sample-to-dimension ratio~$c_2$ to an existing dataset that has noise variance~$\sigma_1^2 = 1$, sample-to-dimension ratio $c_1=10$ and subspace amplitude $\tht_i=1$  for the $i$th component.
The combined dataset has a sample-to-dimension ratio of $c=c_1+c_2$ and is potentially heteroscedastic with noise variances $\sigma_1^2$ and $\sigma_2^2$ appearing in proportions $p_1 = c_1/c$ and $p_2=c_2/c$.

Figure~\ref{fig:impact_add_tz} shows analogous plots to Figure~\ref{fig:impact_add} in Section~\ref{sct:impact_add} but for the asymptotic PCA amplitudes~\eqref{eq:rstheta} and coefficient recovery~\eqref{eq:rsright}.
As was the case for Figure~\ref{fig:impact_add}, the dashed orange curves show the recovery when $\sigma_2^2 = 1 = \sigma_1^2$ and illustrate the benefit we would expect for homoscedastic data: increasing the samples per dimension improves recovery.
The green curves show the performance when $\sigma_2^2=1.1>\sigma_1^2$; as before, these samples are ``slightly'' noisier and performance improves for any number added.
Finally, the dotted red curves show the performance when $\sigma_2^2=1.4>\sigma_1^2$.
As before, performance degrades when adding a small number of these noisier samples.
However, unlike subspace recovery, performance degrades when adding any amount of these samples.
In the limit $c_2 \to \infty$, the asymptotic amplitude bias is $1+\sigma_2^2/\theta_i^2$ and the asymptotic coefficient recovery is $1/(1+\sigma_2^2/\theta_i^2)$;
neither has perfect recovery in the limit when added samples are noisy.

\section{Numerical simulation: amplitude and coefficient recovery} \label{sct:exp_rest}

\begin{figure}[t]
\centering
\begin{subfigure}[t]{0.49\linewidth}
\centering
\includegraphics[width=0.97\linewidth]{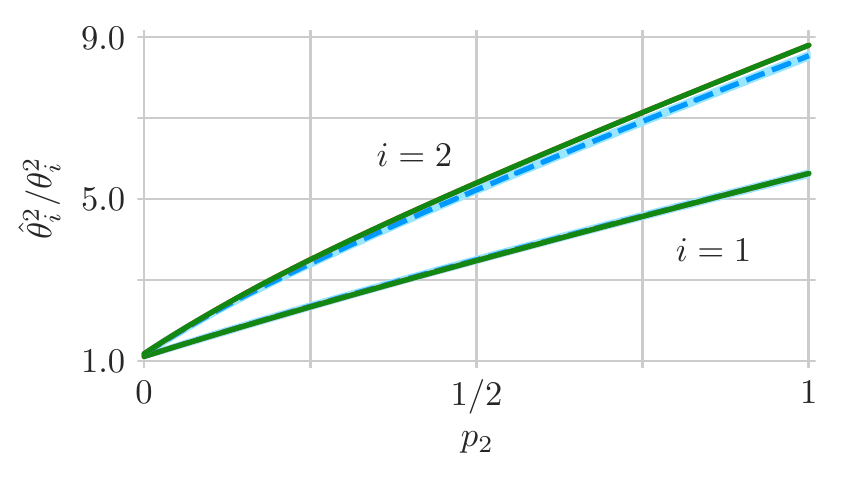}
\caption{$10^3$ samples in $10^2$ dimensions.}
\label{fig:exps51_t}
\end{subfigure}\
\begin{subfigure}[t]{0.49\linewidth}
\centering
\includegraphics[width=0.97\linewidth]{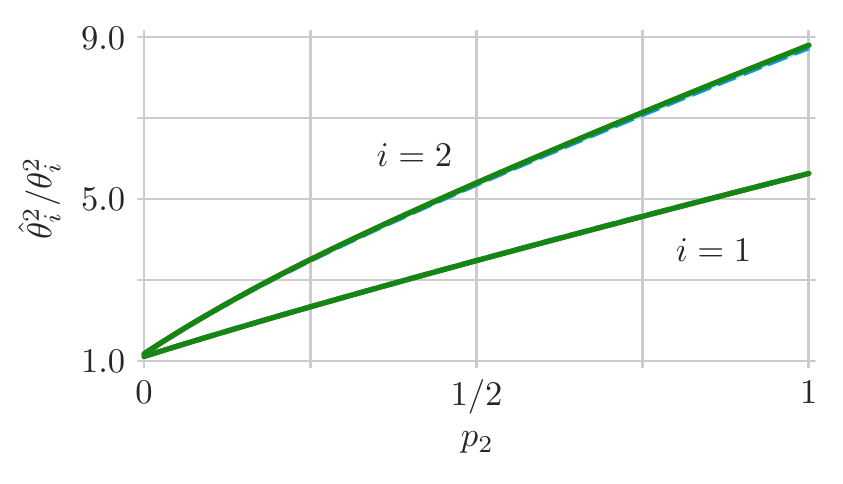}
\caption{$10^4$ samples in $10^3$ dimensions.}
\label{fig:exps52_t}
\end{subfigure}
\caption{Simulated amplitude bias~\eqref{eq:rstheta} as a function of the contamination fraction $p_2$, the proportion of samples with noise variance $\sigma_2^2 = 3.25$, where the other noise variance~$\sigma_1^2=0.1$ occurs in proportion $p_1 = 1-p_2$.
The sample-to-dimension ratio is $c=10$ and the subspace amplitudes are $\tht_1=1$ and $\tht_2=0.8$.
Simulation mean (dashed blue curve) and interquartile interval
(light blue ribbon) are shown with the asymptotic bias~\eqref{eq:rstheta} of Theorem~\ref{thm:rs} (green curve).
Increasing data size from (a) to (b) results in  even smaller interquartile intervals, indicating concentration to the mean, which is converging to the asymptotic bias.
}
\label{fig:exps5_t}
\end{figure}

\begin{figure}[t!]
\centering
\begin{subfigure}[t]{0.49\linewidth}
\centering
\includegraphics[width=0.97\linewidth]{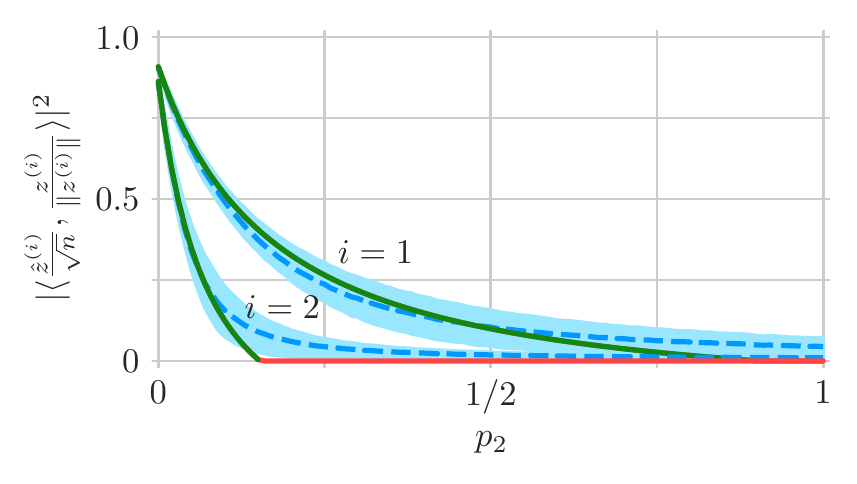}
\caption{$10^3$ samples in $10^2$ dimensions.}
\label{fig:exps51_z}
\end{subfigure}\
\begin{subfigure}[t]{0.49\linewidth}
\centering
\includegraphics[width=0.97\linewidth]{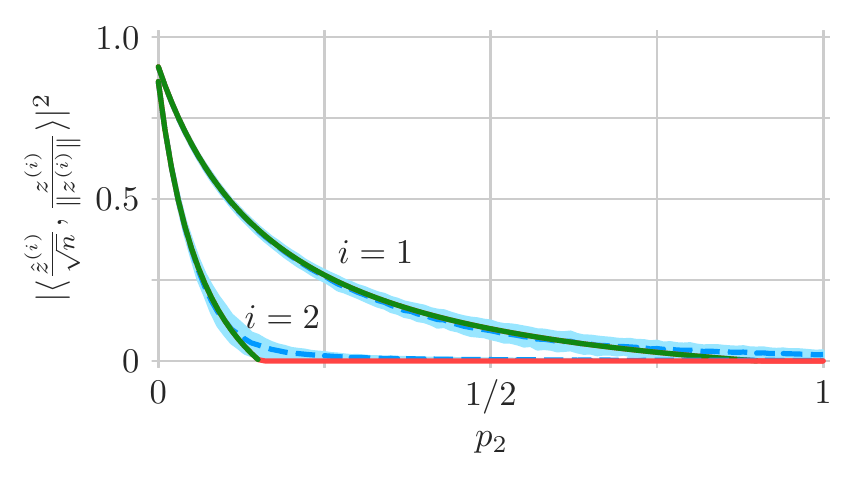}
\caption{$10^4$ samples in $10^3$ dimensions.}
\label{fig:exps52_z}
\end{subfigure}
\caption{Simulated coefficient recovery~\eqref{eq:rsright} as a function of the contamination fraction $p_2$, the proportion of samples with noise variance $\sigma_2^2 = 3.25$, where the other noise variance~$\sigma_1^2=0.1$ occurs in proportion $p_1 = 1-p_2$.
The sample-to-dimension ratio is $c=10$ and the subspace amplitudes are $\tht_1=1$ and $\tht_2=0.8$.
Simulation mean (dashed blue curve) and interquartile interval
(light blue ribbon) are shown with the asymptotic recovery~\eqref{eq:rsright} of Theorem~\ref{thm:rs} (green curve).
The region where $A(\beta_i) \leq0$ is the red horizontal segment with value zero (the prediction of Conjecture~\ref{conj:rs}).
Increasing data size from (a) to (b) results in  smaller interquartile intervals, indicating concentration to the mean, which is converging to the asymptotic recovery.
}
\label{fig:exps5_z}
\end{figure}

Section~\ref{sct:experiment} of~\cite{hong2017aposupp} shows that the asymptotic subspace recovery~\eqref{eq:rs} of Theorem~\ref{thm:rs} is meaningful for practical settings with finitely many samples in a finite-dimensional space.
This section shows that the same is true for the asymptotic PCA amplitudes~\eqref{eq:rstheta} and coefficient recovery~\eqref{eq:rsright}.
For the asymptotic PCA amplitudes, we again consider the ratio $\thh_i^2/\tht_i^2$. As discussed in Remark~\ref{rm:bias}, the asymptotic PCA amplitude $\thh_i$ is positively biased relative to the subspace amplitude $\tht_i$, and so the almost sure limit of $\thh_i^2/\tht_i^2$ is greater than one, with larger values indicating more bias.

As in Section~\ref{sct:experiment}, this section simulates data according to the model described in Section~\ref{sct:model} for a two-dimensional subspace with subspace amplitudes $\tht_1 = 1$ and $\tht_2 = 0.8$, two noise variances $\sigma_1^2=0.1$ and~$\sigma_2^2=3.25$, and a sample-to-dimension ratio of $c=10$. We sweep the proportion of high noise points $p_2$ from zero to one, setting $p_1=1-p_2$ as in Section~\ref{sct:experiment}.
The first simulation considers $n=10^{3}$ samples in a $d=10^{2}$ dimensional ambient space ($10^4$ trials). The second increases these to $n=10^{4}$ samples in a $d=10^{3}$ dimensional ambient space ($10^3$ trials).
All simulations generate data from the standard normal distribution, i.e., $\zt_{ij},\varepsilon_{ij}\sim\cN(0,1)$.
Figures~\ref{fig:exps5_t} and~\ref{fig:exps5_z} show analogous plots to Figure~\ref{fig:exps5} but for the asymptotic PCA amplitudes~\eqref{eq:rstheta} and coefficient recovery~\eqref{eq:rsright}, respectively.

As was the case for Figure~\ref{fig:exps5} in Section~\ref{sct:experiment}, both Figures~\ref{fig:exps5_t} and~\ref{fig:exps5_z} illustrate the following general observations:
\begin{enumerate}
\item[a)] the simulation mean and almost sure limit generally agree in the smaller simulation of $10^3$ samples in a $10^2$ dimensional ambient space
\item[b)] the smooth simulation mean deviates from the non-smooth almost sure limit near the phase transition
\item[c)] the simulation mean and almost sure limit agree better for the larger simulation of $10^4$ samples in a $10^3$ dimensional ambient space
\item[d)] the interquartile intervals for the larger simulations are roughly half the size of those in the smaller simulations, indicating concentration to the means.
\end{enumerate}
In fact, the amplitude bias in Figure~\ref{fig:exps5_t} and the coefficient recovery in Figure~\ref{fig:exps5_z} both have significantly better agreement with their almost sure limits than the subspace recovery in Figure~\ref{fig:exps5} has with its almost sure limit.
The amplitude bias in Figure~\ref{fig:exps5_t}, in particular, is tightly concentrated around its almost sure limit~\eqref{eq:rstheta}.
Furthermore, Figure~\ref{fig:exps5_z} demonstrates good agreement with Conjecture~\ref{conj:rs}, providing evidence that there is indeed a phase transition below which the coefficients are also not recovered.

\section{Additional numerical simulations} \label{sct:add_exps}
\begin{figure}[t]
\centering
\begin{subfigure}[t]{0.31\linewidth}
\centering
\includegraphics[width=\linewidth]{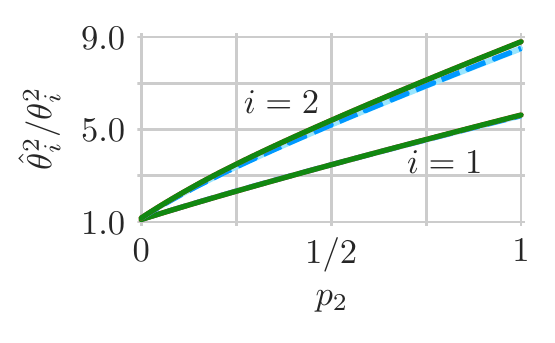}
\caption{Amplitude bias, $10^3$~samples in $10^2$ dimensions.}
\end{subfigure}
\quad
\begin{subfigure}[t]{0.31\linewidth}
\centering
\includegraphics[width=\linewidth]{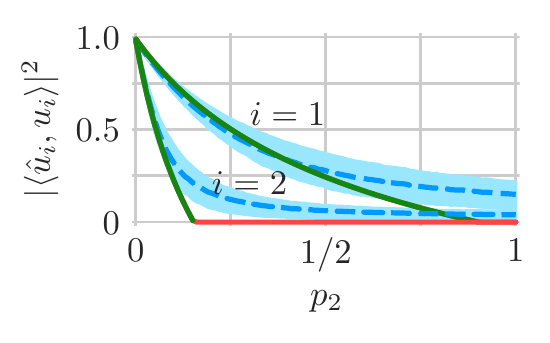}
\caption{Subspace recovery, $10^3$ samples in $10^2$ dimensions.}
\end{subfigure}
\quad
\begin{subfigure}[t]{0.31\linewidth}
\centering
\includegraphics[width=\linewidth]{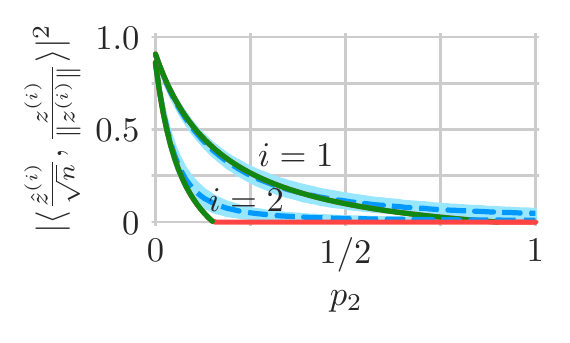}
\caption{Coefficient recovery, $10^3$ samples in $10^2$ dimensions.}
\end{subfigure}
\begin{subfigure}[t]{0.31\linewidth}
\centering
\includegraphics[width=\linewidth]{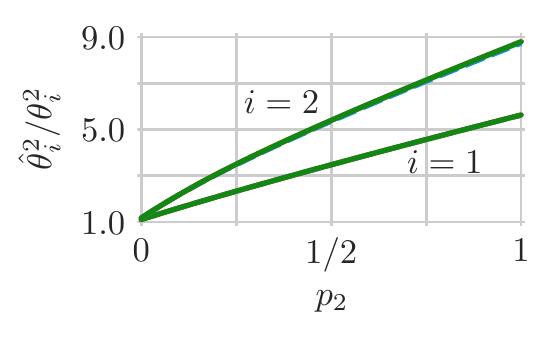}
\caption{Amplitude bias, $10^4$~samples in $10^3$ dimensions.}
\end{subfigure}
\quad
\begin{subfigure}[t]{0.31\linewidth}
\centering
\includegraphics[width=\linewidth]{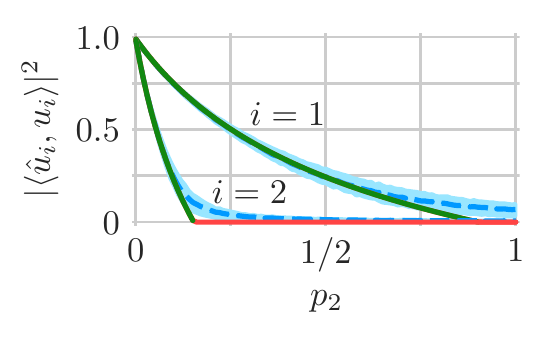}
\caption{Subspace recovery, $10^4$ samples in $10^3$ dimensions.}
\end{subfigure}
\quad
\begin{subfigure}[t]{0.31\linewidth}
\centering
\includegraphics[width=\linewidth]{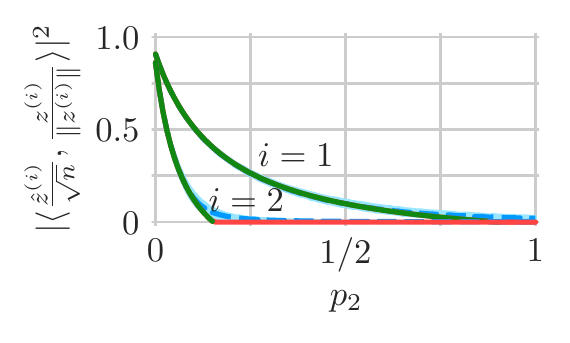}
\caption{Coefficient recovery, $10^4$ samples in $10^3$ dimensions.}
\end{subfigure}
\caption{Simulated complex-normal PCA performance as a function of the contamination fraction $p_2$, the proportion of samples with noise variance $\sigma_2^2 = 3.25$, where the other noise variance~$\sigma_1^2=0.1$ occurs in proportion $p_1 = 1-p_2$.
The sample-to-dimension ratio is $c=10$ and the subspace amplitudes are $\tht_1=1$ and $\tht_2=0.8$.
Simulation mean (dashed blue curve) and interquartile interval
(light blue ribbon) are shown with the almost sure limits of Theorem~\ref{thm:rs} (green curve).
The region where $A(\beta_i) \leq0$ is shown as red horizontal segments with value zero (the prediction of Conjecture~\ref{conj:rs}).
}
\label{fig:exp_complexnormal}
\end{figure}

\begin{figure}[t]
\centering
\begin{subfigure}[t]{0.31\linewidth}
\centering
\includegraphics[width=\linewidth]{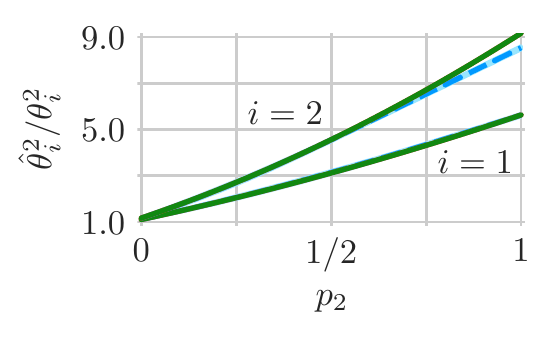}
\caption{Amplitude bias, $10^3$~samples in $10^2$ dimensions.}
\end{subfigure}
\quad
\begin{subfigure}[t]{0.31\linewidth}
\centering
\includegraphics[width=\linewidth]{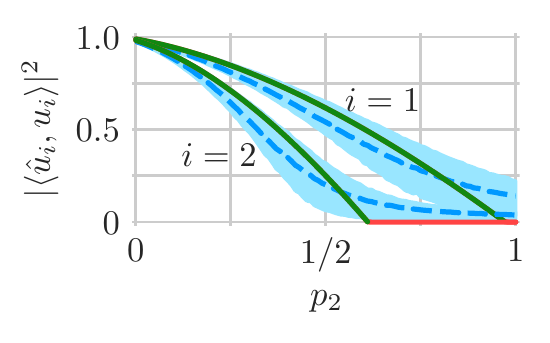}
\caption{Subspace recovery, $10^3$ samples in $10^2$ dimensions.}
\end{subfigure}
\quad
\begin{subfigure}[t]{0.31\linewidth}
\centering
\includegraphics[width=\linewidth]{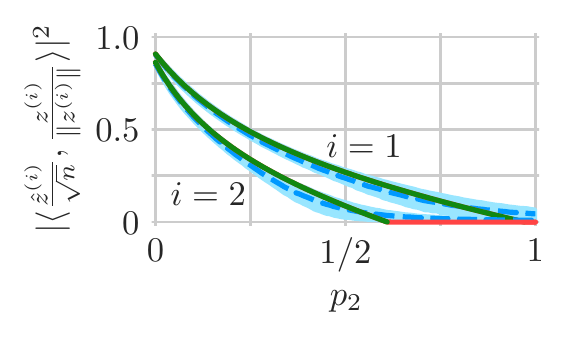}
\caption{Coefficient recovery, $10^3$ samples in $10^2$ dimensions.}
\end{subfigure}
\begin{subfigure}[t]{0.31\linewidth}
\centering
\includegraphics[width=\linewidth]{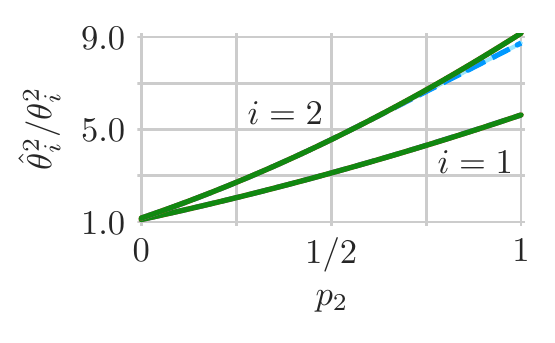}
\caption{Amplitude bias, $10^4$~samples in $10^3$ dimensions.}
\end{subfigure}
\quad
\begin{subfigure}[t]{0.31\linewidth}
\centering
\includegraphics[width=\linewidth]{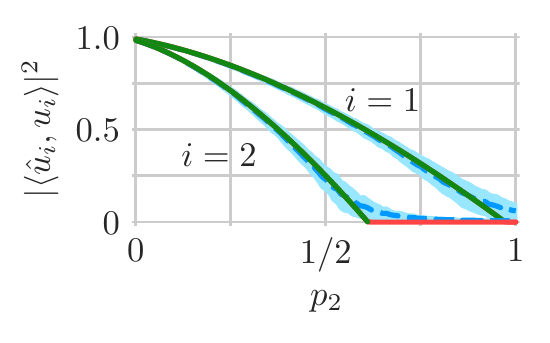}
\caption{Subspace recovery, $10^4$ samples in $10^3$ dimensions.}
\end{subfigure}
\quad
\begin{subfigure}[t]{0.31\linewidth}
\centering
\includegraphics[width=\linewidth]{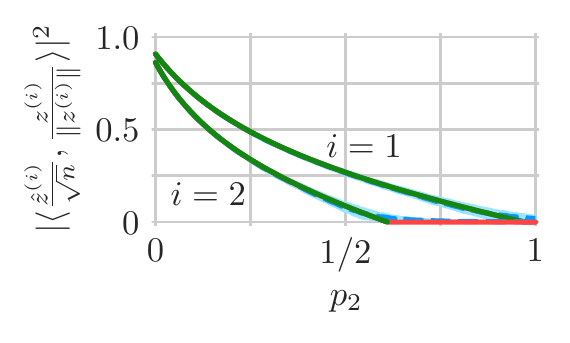}
\caption{Coefficient recovery, $10^4$ samples in $10^3$ dimensions.}
\end{subfigure}
\caption{Simulated mixture model PCA performance as a function of the mixture probability $p_2$, the probability that a scaled noise entry $\eta_i\varepsilon_{ij}$ is Gaussian with variance $\lambda_2^2 = 3.25$, where it is Gaussian with variance~$\lambda_1^2=0.1$ otherwise, i.e., with probability $p_1 = 1-p_2$.
The sample-to-dimension ratio is $c=10$ and the subspace amplitudes are $\tht_1=1$ and $\tht_2=0.8$.
Simulation mean (dashed blue curve) and interquartile interval
(light blue ribbon) are shown with the almost sure limits of Theorem~\ref{thm:rs} (green curve).
The region where $A(\beta_i) \leq0$ is shown as red horizontal segments with value zero (the prediction of Conjecture~\ref{conj:rs}).
}
\label{fig:exp_mixture}
\end{figure}

Section~\ref{sct:experiment} of~\cite{hong2017aposupp} and Section~\ref{sct:exp_rest} provide numerical simulation results for real-valued data generated using normal distributions. This section illustrates the generality of the model in Section~\ref{sct:model} by showing analogous simulation results for circularly symmetric complex normal data in Figure~\ref{fig:exp_complexnormal} and for a mixture of Gaussians in Figure~\ref{fig:exp_mixture}.
As before, we show the results of two simulations for each setting.
The first simulation considers $n=10^{3}$ samples in a $d=10^{2}$ dimensional ambient space ($10^4$ trials). The second increases these to $n=10^{4}$ samples in a $d=10^{3}$ dimensional ambient space ($10^3$ trials).

Figure~\ref{fig:exp_complexnormal} mirrors Sections~\ref{sct:experiment} and~\ref{sct:exp_rest} and simulates data according to the model described in Section~\ref{sct:model} for a two-dimensional subspace with subspace amplitudes $\tht_1 = 1$ and $\tht_2 = 0.8$, two noise variances $\sigma_1^2=0.1$ and~$\sigma_2^2=3.25$, and a sample-to-dimension ratio of $c=10$. We again sweep the proportion of high noise points $p_2$ from zero to one, setting $p_1=1-p_2$.
The only difference is that Figure~\ref{fig:exp_complexnormal} generates data from the standard {\em complex} normal distribution, i.e., $\zt_{ij},\varepsilon_{ij}\sim\cCN(0,1)$.

Figure~\ref{fig:exp_mixture} instead simulates a {\em homoscedastic} setting of the model described in Section~\ref{sct:model} over a range of noise distributions, all {\em mixtures} of Gaussians.
As before, we consider a two-dimensional subspace with subspace amplitudes $\tht_1 = 1$ and $\tht_2 = 0.8$, and a sample-to-dimension ratio of $c=10$.
Figure~\ref{fig:exp_mixture} generates coefficients $\zt_{ij}\sim\cN(0,1)$ from the standard normal distribution and generates noise entries $\varepsilon_{ij}$ from the Gaussian mixture model
\begin{equation*}
\varepsilon_{ij} \sim
\begin{cases}
\cN\left(0,\lambda_1^2/\sigma^2\right) & \text{with probability } p_1, \\
\cN\left(0,\lambda_2^2/\sigma^2\right) & \text{with probability } p_2,
\end{cases}
\end{equation*}
where $\lambda_1^2=0.1$ and~$\lambda_2^2=3.25$, and the {\em single} noise variance is set to
\begin{equation} \label{eq:mixture_var}
\sigma^2 = p_1\lambda_1^2+p_2\lambda_2^2.
\end{equation}
Each scaled noise entry $\eta_i\varepsilon_{ij}=\sigma\varepsilon_{ij}$ is a mixture of two Gaussian distributions with variances $\lambda_1^2$ and $\lambda_2^2$.
We sweep the mixture probability $p_2$ from zero to one, setting $p_1 = 1-p_2$.
Thus, Figure~\ref{fig:exp_mixture} illustrates performance over a range of noise distributions.
The noise variance~\eqref{eq:mixture_var} in Figure~\ref{fig:exp_mixture} matches the average noise variance in Figure~\ref{fig:exp_complexnormal} as we sweep $p_2$.
However, Figures~\ref{fig:exp_mixture} and~\ref{fig:exp_complexnormal} differ because Figure~\ref{fig:exp_mixture} simulates a {\em homoscedastic} setting while Figure~\ref{fig:exp_complexnormal} simulates a {\em heteroscedastic} setting.
Figure~\ref{fig:exp_mixture} also differs from Figure~\ref{fig:mixture_rand} that simulates data from the random inter-sample heteroscedastic model of Section~\ref{sct:randommodel}. While both simulate (scaled) noise from a mixture model, scaled noise entries $\eta_i\varepsilon_{ij}$ in Figure~\ref{fig:exp_mixture} are all iid. Scaled noise entries $\eta_i\varepsilon_{ij}$ in the random inter-sample heteroscedastic model are  independent only across samples; they are {\em not} independent within each sample. Figure~\ref{fig:exp_mixture} is instead more like Figure~\ref{fig:mixture_spike} that simulates data from the Johnstone spiked covariance model.
See Section~\ref{sct:randommodel} for a comparison of these models.

As was the case for (real-valued) standard normal data in Sections~\ref{sct:experiment} and~\ref{sct:exp_rest}, Figures~\ref{fig:exp_complexnormal} and~\ref{fig:exp_mixture} illustrate the following general observations:
\begin{enumerate}
\item[a)] the simulation means and almost sure limits generally agree in the smaller simulations of $10^3$ samples in a $10^2$ dimensional ambient space
\item[b)] the smooth simulation means deviate from the non-smooth almost sure limits near the phase transitions
\item[c)] the simulation means and almost sure limits agree better for the larger simulations of $10^4$ samples in a $10^3$ dimensional ambient space
\item[d)] the interquartile intervals for the larger simulations are roughly half the size of those in the smaller simulations, indicating concentration to the means.
\end{enumerate}
The agreement between simulations and almost sure limits demonstrated in both Figures~\ref{fig:exp_complexnormal} and~\ref{fig:exp_mixture} highlights the generality of the model considered in~\cite{hong2017aposupp}: it allows for both complex-valued data and non-Gaussian distributions.
In both cases, the asymptotic results of Theorem~\ref{thm:rs} remain meaningful for practical settings with finitely many samples in a finite-dimensional space.

\appendix
\section*{References}


\end{document}